\renewcommand{\theequation} {\arabic{section}.\arabic{equation}}
\def \V {{\rm V}}
\def \min {{\rm min}}
\def \P {\mathbf{P}_\theta}
\def \R {\mathbb{R}}
\def \Xs {{\mathbb X}_{1:s}}
\def \Ls {{\mathbb L}_{1:s}}
\newcommand{\ve}{\varepsilon}
\newtheorem{lemma}{\sc Lemma}
\newtheorem{theorem}{\sc Theorem}
\newtheorem{proposition}{\sc Proposition}
\newtheorem{corollary}{\sc Corollary}
\newcommand{\simo}[1]{{\color{red}#1}}
\begin{document}

\begin{frontmatter}
\title{Variable selection, monotone likelihood ratio and group sparsity}
%\title{Variable selection under monotone likelihood ratio with application to group sparse model}
\runtitle{ Variable selection, MLR and group sparsity}
%\thankstext{T1}{Footnote to the title with the ``thankstext'' command.}

%\begin{aug}
%%%%%%%%%%%%%%%%%%%%%%%%%%%%%%%%%%%%%%%%%%%%%%%
%% Only one address is permitted per author. %%
%% Only division, organization and e-mail is %%
%% included in the address.                  %%
%% Additional information can be included in %%
%% the Acknowledgments section if necessary. %%
%%%%%%%%%%%%%%%%%%%%%%%%%%%%%%%%%%%%%%%%%%%%%%%
%\author[A]{\fnms{???} \snm{???}\ead[label=e1]{???@???}},
%\author[B]{\fnms{???} %%\snm{???}\ead[label=e2,mark]{???@???}}
%\and
%\author[B]{\fnms{???} %\snm{???}\ead[label=e3,mark]{???@???}}
%%%%%%%%%%%%%%%%%%%%%%%%%%%%%%%%%%%%%%%%%%%%%%
%% Addresses                                %%
%%%%%%%%%%%%%%%%%%%%%%%%%%%%%%%%%%%%%%%%%%%%%%
%\address[A]{???, \printead{e1}}

%%\address[B]{???, \printead{e2,e3}}
%\end{aug}

\begin{aug}
\author[A]{\fnms{Cristina} \snm{Butucea}\ead[label=e1]{cristina.butucea@ensae.fr}},
\author[B]{\fnms{Enno} \snm{Mammen}
\ead[label=e2]{mammen@math.uni-heidelberg.de}},
\author[C]{\fnms{Mohamed} \snm{Ndaoud}\ead[label=e3]{ndaoud@essec.edu}}
\and
\author[A]{\fnms{Alexandre B.} \snm{Tsybakov}\ead[label=e4]{alexandre.tsybakov@ensae.fr}}
%\thankstext{t1}{Some comment}
%\thankstext{t2}{First supporter of the project}
%\thankstext{t3}{Second supporter of the project}
%\runauthor{F. Author et al.}

\address[A]{CREST, ENSAE, IP Paris\\
5, ave. Henry Le Chatelier \\
91120 Palaiseau, France \\
\printead{e1,e4}}

\address[B]{Institute for Applied Mathematics\\
Heidelberg University\\
69120 Heidelberg, Germany\\
\printead{e2}}

\address[C]{Department of Information Systems, Decision Sciences and Statistics\\
ESSEC Business School\\
95000 Cergy, France\\
\printead{e3}}
%\phantom{E-mail:\ }\printead*{e2}}

\end{aug}

\begin{abstract}
In the pivotal variable selection problem, we derive the exact non-asymptotic minimax selector over the class of all $s$-sparse vectors, which is also the Bayes selector with respect to the uniform prior. While this optimal selector is, in general, not realizable in polynomial time, we show that its tractable counterpart (the scan selector)  attains the minimax expected Hamming risk to within factor 2, and is also exact minimax with respect to the probability of wrong recovery. As a consequence, we establish explicit lower bounds under the monotone likelihood ratio property  and we obtain a tight characterization of the minimax risk in terms of the best separable selector risk.  
We apply these general results to derive necessary and sufficient conditions of exact and almost full recovery in the location model with light tail distributions and in the problem of group variable selection under Gaussian noise.

%We study the variable selection problem under Hamming loss when the underlying distributions belong to a family with monotone likelihood ratio. We establish general non-asymptotic lower bounds that improve on \cite{butucea2018variable} and allow us to deduce non-asymptotic necessary and sufficient conditions for exact and almost full variable selection. A tight characterization of the risk is given in the regime where exact recovery is not possible. These results are applied to models with log-concave distributions, to group variable selection in the Gaussian vector model and its generalization to groups of sub-Gaussian vectors with given covariance matrix. We also provide numerical results supporting  the theory.
\end{abstract}

\begin{keyword}[class=AMS]
\kwd[Primary ]{62G07}
\kwd{62G20}
%\kwd[; secondary ]{60K35}
\end{keyword}

\begin{keyword}
\kwd{almost full recovery, exact recovery,  group variable selection, Hamming loss, minimax risk, pivotal selection problem, sparsity, variable selection}
%\kwd{\LaTeXe}
\end{keyword}

\end{frontmatter}

%\noindent {\it AMS 2000 subject classifications}: 62G07; 62G20

%\newpage

\section{Introduction}\label{intro}
\setcounter{equation}{0}
Assume that we observe independent random variables $X_{1},\dots,X_{d}$ on a measurable space $(\mathcal{X},\mathcal{U})$ such that $s$ among them are distributed according to the probability measure $P_{1}$ and the others are distributed according to the probability measure $P_{0}$. We assume that {$d> 2$}, $1\le s<d$ and $P_{0}\neq P_{1}$. Let $f_{0}$ and $f_{1}$ be densities of $P_{0}$ and $P_{1}$ with respect to some dominating measure that we will further denote by $\mu$. Denote by $\eta=(\eta_{1},\dots,\eta_{d})$ the vector such that $\eta_{j}=1$ if the distribution of $X_{j}$ is $P_{1}$ and $\eta_{j}=0$ if it is $P_{0}$. Define $\Theta_d(s)$ as the set of all vectors $\eta \in \{0,1\}^{d}$ with exactly $s$ non-zero components. The components $j$ corresponding to $\eta_j=1$ can be interpreted as relevant variables. We state the problem of variable selection  as the problem of estimating the binary vector
$\eta$. 

As estimators (possibly randomized) of $\eta$, we consider any measurable functions $\widehat \eta=\widehat \eta(X_1,\dots,X_d,\zeta)$ of $(X_1,\dots,X_d,\zeta)$  taking values in $\{0,1\}^d$, where $\zeta$ is a random variable with values in a measurable space $(\mathcal{Z},\mathcal{V})$ independent of $(X_1,\dots,X_d)$. 
Such estimators will be called {\it selectors}.
We define the loss of a selector $\widehat \eta$  by the Hamming distance between $\widehat \eta$ and $\eta$, that is, by the number of positions at which $\widehat \eta$ and $\eta$ differ:
$$
|\widehat \eta-\eta|:=\sum_{j=1}^d |\widehat \eta_j-\eta_j|= \sum_{j=1}^d \mathbf{1}(\widehat \eta_j\ne \eta_j).
$$
Here $\widehat \eta_j$ and $ \eta_j$ are the $j$th components of $\widehat \eta$ and $ \eta$, respectively, and $\mathbf{1}(\cdot)$ denotes the indicator function. 
The performance  of a selector $\widehat \eta$ is measured by its expected Hamming risk ${\mathbf E}_\eta |\widehat \eta - \eta|$.  Here ${\mathbf E}_\eta$ denotes the expectation with respect to probability measure ${\mathbf P}_\eta$ of $(X_1,\dots,X_d,\zeta)$ for given $\eta$.
As a benchmark for the class $\Theta_d(s)$ we consider the corresponding minimax risk of variable selection
\begin{align*}%\label{minimaxrisk}
\inf_{\tilde \eta} \sup_{\eta \in \Theta_d(s)} \, {\mathbf E}_\eta |\tilde \eta - \eta|,
\end{align*}
where $\inf_{\tilde \eta}$ denotes the infimum over all selectors.
In this paper, we find optimal selectors achieving the minimax risk. We also study such asymptotic properties as exact recovery and almost full recovery. %We will use our non-asymptotic risk bounds to study the following asymptotic concepts. 
For a sequence $(s_d)_{d \geq 1}$ we
 say that a selector $\widehat \eta$ achieves exact recovery for $\Theta_d(s_d)$  if
\begin{align*}
\limsup_{d\to \infty} \sup_{\eta \in \Theta_d(s_d)} \, {\mathbf E}_\eta |\widehat \eta - \eta| =0
\end{align*}
 and it achieves almost full recovery for $\Theta_d(s_d)$  if
\begin{align*}
\limsup_{{d}\to \infty} \sup_{\eta \in \Theta_d(s_d)} \, \frac{1}{s_d} {\mathbf E}_\eta |\widehat \eta - \eta| =0.
\end{align*}
Along with the Hamming risk, another popular risk measure for variable selection is the probability of wrong recovery ${\mathbf P}_\eta( S_{\widehat\eta} \ne S_{\eta})$, where $S_\eta= \{ j: \,  \eta_j\ne 0\}$. 
We have
\begin{equation} \label{eq:risks}
{\mathbf P}_\eta(S_{\widehat\eta} \ne S_{\eta}) = {\mathbf P}_\eta(|\widehat \eta - \eta|\ge 1) \le {\mathbf E}_\eta |\widehat \eta - \eta|.
\end{equation}
Thus, ${\mathbf P}_\eta(S_{\widehat\eta} \ne S_{\eta})$ is, in fact, the Hamming risk with indicator loss. Bounding from above the expected Hamming risk provides a stronger result than bounding the probability of wrong recovery. We also note that 
instead of the $\ell_0$-sphere $\Theta_d(s)$ in the above definitions one can consider the $\ell_0$-ball $\{\eta: |\eta|_0\le s\}$ as it was done in \cite{butucea2018variable}. Here,  $|\eta|_0$ denotes the number of non-zero components of $\eta$.

The problem of 
variable selection as defined above was introduced in \cite{butucea2018variable}. In what follows, we will call it the {\it pivotal selection problem}. It was shown in \cite{butucea2018variable} that it gives a key to solve other problems of variable selection, where instead of two measures $P_0$ and $P_1$ one has families of measures. 
We also mention a related prior work dealing with asymptotic settings, cf. \cite{IS2014} and \cite{BS2017} for sparse additive models with smooth signals. 
One of the contributions in \cite{butucea2018variable} was to establish non-asymptotic bounds for the minimax Hamming risk. Based on these bounds and exploiting the monotone likelihood ratio property, \cite{butucea2018variable} derived sharp conditions of exact and almost full recovery under the sparse Gaussian shift model.
 Similar ideas were further developed for linear regression \cite{ndaoud2019optimal,ndaoud2020scaled,gao2019iterative,gao2019fundamental,zadik,zadik2019}, multiple testing \cite{roquain2020false,abraham2021sharp,arias2017distribution}, sparse confidence sets \cite{belitser2021uncertainty,ning2020sparse} and 
$k$-ranking  \cite{chen2020partial}. 

Prior to these developments, a lot of work was devoted to variable selection in high-dimensional linear regression under sparsity.
In particular, the Lasso, Dantzig selector, other penalized methods as well as marginal regression were analyzed in detail; see, for example, \cite{MS,ZH,Lounici2008,fletcher,wainwrightaLasso,Zhang2010,rad,saligrama1,caiOMP,genovese-jin-wasserman,jin1} and the references therein. These papers focus on recovery of the sparsity pattern $S_\theta$ of the unknown regression parameter $\theta\in \R^d$. They propose selectors such that the probability of wrong recovery ${\mathbf P}_\eta(S_{\widehat\eta}  \ne S_\theta)$ is close to 0 in an asymptotic sense. 
Sparse group variable selection is considered from the same perspective in \cite{lounici2010oracle} based on the group Lasso estimator. Group selection was recently analyzed under a different angle in \cite{reeve2021optimal}, where one can find further references on the topic.   
Under a Bayesian setting and the assumption that $s\sim d^{\beta}$ for some $\beta\in(0,1)$, a refined asymptotic analysis of variable selection for Gaussian linear regression  is provided in  \cite{genovese-jin-wasserman,jin1}.

A great deal of attention in the literature on variable selection was devoted to models with Monotone Likelihood Ratio (MLR) property. We state it as follows. 

\smallskip

\noindent\textbf{MLR property.}
%\noindent
{\it The observations $X_i$ take values in a subset of $\R$, the densities $f_0$ and $f_1$ have the same support, and $\frac{f_{1}}{f_{0}}(\cdot)$ is an increasing function on the support of the densities.}

\smallskip

This property covers a large class of exponential family distributions. It is satisfied for location models with log-concave density. It is also satisfied for the chi-square location model where $f_{0}$ and $f_{1}$ are the standard chi-square and a non-central chi-square densities, respectively. Such a model arises in connection with group variable selection, see Section \ref{sec:grouped} below.  

The present paper is organized  as follows.  
First, in the pivotal selection problem, we 
find the exact minimax selector (in non-asymptotic sense) over
the class of all $s$-sparse vectors $\Theta_d(s)$, which is also the Bayes selector with respect to the uniform prior. While this minimax selector is, in general, not realizable in polynomial time, we show that its tractable counterpart (the scan selector)  attains the minimax Hamming risk to within factor 2, and is also exact minimax with respect to the probability of wrong recovery. Based on these results, we derive explicit lower bounds under the MLR property  and obtain a tight characterization of the minimax risk in terms of the best separable selector risk.  We propose two methods of deriving accurate lower bounds.  One of them is based on using a block prior that draws one entry at random from each of $s$ blocks of size $d/s$, while the other one exploits uniform prior on binary vectors with support size $s$ and the exact minimax and Bayes selector.

Second, we apply these general results in two specific settings of interest. In Section~\ref{sec:log-concave}, we apply them to the location model with light tail distributions. In Section~\ref{sec:grouped}, we consider the problem of group variable selection under Gaussian noise. %as well as  its extension  to sub-Gaussian observation scheme with general covariance matrix. 
For both problems, we derive necessary and sufficient conditions of exact and almost full recovery based on the general results of Section~\ref{sec:minimax-and-bayes}. 
%Finally, Section \ref{sec:plot} provides a numerical experiment supporting the theory.

\section{Minimax and Bayes solutions for the pivotal selection problem}\label{sec:minimax-and-bayes}
\setcounter{equation}{0}

%Recall that we consider the setting where the MLR property is satisfied, that is, $X_i$'s are real-valued, the densities $f_0$ and $f_1$ have the same support and $\frac{f_{1}}{f_{0}}(\cdot)$ is an increasing function. 
We first recall the bounds \cite{butucea2018variable} on the minimax Hamming risk in the pivotal selection problem. The main purpose in \cite{butucea2018variable} was to show that the minimax risk is closely related to the risk of the best separable selector, that is, to the quantity
\begin{align}\label{eq:old_bound}
\Psi_{\rm sep}(d,s)&=s{P}_1
(sf_1(X_1) < (d-s)f_0(X_1))
\\ \nonumber
&\quad
+
(d-s)
{P}_0
(sf_1(X_1) \ge (d-s)f_0(X_1)).
\end{align}
By definition, a selector $\tilde\eta$ is called {\it separable} if its $j$th component $\tilde\eta_j$ depends only on $X_j$ for $j=1,\dots,d$. Using the Neyman-Pearson lemma, it is not hard to check that the selector with components \cite{butucea2018variable}
\begin{equation}\label{eq:separable}
\widehat\eta_j=\mathbf{1}\{sf_1(X_j) \ge (d-s)f_0(X_j)\}
\end{equation}
minimizes the Hamming risk among all separable selectors for any $\eta$ such that $|\eta|_{0}=s$. The value $\Psi_{\rm sep}$ defined in \eqref{eq:old_bound} gives the Hamming risk of this best separable selector for all $\eta$ such that $|\eta|_{0}=s$. Thus, we have
\begin{equation}\label{eq:minimax-separable}
 \Psi_{\rm sep}(d,s) =   \underset{{\rm separable}\, \tilde{\eta}}{\inf} \ \underset{|\eta|_{0}= s}{\sup} \mathbf{E}_{\eta} |\tilde{\eta} - \eta|.
\end{equation} 
Moreover, $\Psi_{\rm sep}$ is related to the minimax Hamming risk over all selectors as stated in the following theorem.

\begin{theorem}\label{th:BNST18}[ \cite{butucea2018variable}, Theorem 3.2]
For any $s' \in (0,s]$,
\begin{equation}\label{eq:BNST18:1}
\underset{\tilde{\eta}}{\inf} \underset{|\eta|_{0}\leq s}{\sup} \mathbf{E}_{\eta} |\tilde{\eta} - \eta|\geq \frac{s'}{s}\Psi_{\rm sep}(d,s) - 4s'\exp\left(-\frac{(s-s')^{2}}{2s}\right).
\end{equation}
Furthermore, the selector $\widehat\eta$ with components \eqref{eq:separable} is such that
\begin{equation}\label{eq:BNST18:2}
 \underset{|\eta|_{0}\leq s}{\sup} \mathbf{E}_{\eta} |\widehat{\eta} - \eta|\leq \frac{d}{d-s}\Psi_{\rm sep}(d,s).
\end{equation}
\end{theorem}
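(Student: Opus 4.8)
I would prove the two inequalities separately. The upper bound \eqref{eq:BNST18:2} is a short direct computation, while the lower bound \eqref{eq:BNST18:1} is a Bayesian argument whose only delicate point is a concentration estimate. Throughout, write $a=P_1(sf_1(X_1)<(d-s)f_0(X_1))$ and $b=P_0(sf_1(X_1)\ge(d-s)f_0(X_1))$, so that $\Psi_{\rm sep}(d,s)=sa+(d-s)b$.

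For \eqref{eq:BNST18:2}: fix $\eta$ with support $S_\eta$ of size $k\le s$. Since $\widehat\eta$ is separable with $j$-th component the Neyman--Pearson indicator $\mathbf{1}\{sf_1(X_j)\ge(d-s)f_0(X_j)\}$, and $X_j\sim P_1$ for $j\in S_\eta$ while $X_j\sim P_0$ otherwise, one gets $\mathbf{E}_\eta|\widehat\eta-\eta|=ka+(d-k)b$. Bounding $k\le s$ and $d-k\le d=\tfrac{d}{d-s}(d-s)$, and using $\tfrac{d}{d-s}\ge1$, gives $\mathbf{E}_\eta|\widehat\eta-\eta|\le sa+db\le\tfrac{d}{d-s}(sa+(d-s)b)=\tfrac{d}{d-s}\Psi_{\rm sep}(d,s)$; then take the supremum over $|\eta|_0\le s$.

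For \eqref{eq:BNST18:1}: fix $s'\in(0,s]$ and introduce the product prior $\pi_0$ under which $\eta_1,\dots,\eta_d$ are i.i.d.\ $\mathrm{Bernoulli}(s'/d)$. Since both the prior and the model factorize over coordinates, the Bayes risk of $\pi_0$ decomposes coordinatewise: for every selector $\tilde\eta$, $\mathbf{E}_{\pi_0}\mathbf{E}_\eta|\tilde\eta-\eta|=\sum_j\mathbf{P}(\tilde\eta_j\ne\eta_j)$, each term is minimized by the Bayes rule $\mathbf{1}\{s'f_1(X_j)\ge(d-s')f_0(X_j)\}$ with per-coordinate risk $\int\min\{(s'/d)f_1,(1-s'/d)f_0\}\,d\mu$, and summing gives $\inf_{\tilde\eta}\mathbf{E}_{\pi_0}\mathbf{E}_\eta|\tilde\eta-\eta|=\Psi_{\rm sep}(d,s')$. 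Next I would show $\Psi_{\rm sep}(d,s')\ge\frac{s'}{s}\Psi_{\rm sep}(d,s)$: writing $L=f_1/f_0$ and $t_\sigma=(d-\sigma)/\sigma$, one has $\Psi_{\rm sep}(d,\sigma)/\sigma=\phi(t_\sigma)$ with $\phi(t)=P_1(L<t)+tP_0(L\ge t)$; for $t_1<t_2$ the relation $dP_1=L\,dP_0\ge t_1\,dP_0$ on $\{t_1\le L<t_2\}$ yields $\phi(t_2)-\phi(t_1)\ge(t_2-t_1)P_0(L\ge t_2)\ge0$, so $\phi$ is nondecreasing and $\sigma\mapsto\Psi_{\rm sep}(d,\sigma)/\sigma$ is nonincreasing; apply this at $\sigma=s'$ and $\sigma=s$.

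Finally, since $\pi_0$ is not supported on $\{|\eta|_0\le s\}$, I would pass to the minimax risk over that set via, for any selector $\tilde\eta$, $ \sup_{|\eta|_0\le s}\mathbf{E}_\eta|\tilde\eta-\eta|\ \ge\ \mathbf{E}_{\pi_0}[\mathbf{E}_\eta|\tilde\eta-\eta|\,\mathbf{1}(|\eta|_0\le s)]\ \ge\ \Psi_{\rm sep}(d,s')-\mathbf{E}_{\pi_0}[\mathbf{E}_\eta|\tilde\eta-\eta|\,\mathbf{1}(|\eta|_0>s)], $ then taking the infimum over $\tilde\eta$ and combining with the previous paragraph. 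The crux, and the step I expect to be the main obstacle, is bounding the spillover term $\mathbf{E}_{\pi_0}[\mathbf{E}_\eta|\tilde\eta-\eta|\,\mathbf{1}(|\eta|_0>s)]$ by $4s'\exp(-(s-s')^2/(2s))$ uniformly in $\tilde\eta$: a Chernoff bound gives $\pi_0(|\eta|_0>s)\le\exp(-(s-s')^2/(2s))$, and writing $|\tilde\eta-\eta|=|S_\eta\setminus S_{\tilde\eta}|+|S_{\tilde\eta}\setminus S_\eta|$ the first summand is harmless since it contributes at most $\mathbf{E}_{\pi_0}[|\eta|_0\mathbf{1}(|\eta|_0>s)]\le s'\exp(-(s-s')^2/(2s))$ (a Chernoff estimate of this binomial tail expectation). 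The real work is to bound the false-positive term $\mathbf{E}_{\pi_0}[\mathbf{E}_\eta|S_{\tilde\eta}\setminus S_\eta|\,\mathbf{1}(|\eta|_0>s)]$ by a comparable quantity, i.e.\ to replace the trivial bound $|\tilde\eta-\eta|\le d$ by something scaling with $s'$; this forces a coordinate-by-coordinate accounting of where $\tilde\eta$ can disagree with $\eta$ on the rare event $\{|\eta|_0>s\}$, and I expect it to be the technically most demanding part of the proof.
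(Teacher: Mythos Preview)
This theorem is not proved in the present paper: it is quoted verbatim from \cite{butucea2018variable}, so there is no ``paper's own proof'' to compare against here. I will therefore assess your outline on its own merits.

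Your argument for \eqref{eq:BNST18:2} is correct and complete.

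For \eqref{eq:BNST18:1}, the Bernoulli$(s'/d)$ product prior, the identification of its Bayes risk with $\Psi_{\rm sep}(d,s')$, and the monotonicity $\Psi_{\rm sep}(d,s')\ge (s'/s)\Psi_{\rm sep}(d,s)$ via $\phi(t)=P_1(L<t)+tP_0(L\ge t)$ are all correct and constitute the same skeleton as the original proof. The gap is exactly where you flag it, but it is more serious than you suggest: the spillover term $\mathbf{E}_{\pi_0}[\mathbf{E}_\eta|\tilde\eta-\eta|\,\mathbf{1}(|\eta|_0>s)]$ \emph{cannot} be bounded by $Cs'\exp(-(s-s')^2/(2s))$ uniformly in $\tilde\eta$. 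For the selector $\tilde\eta\equiv(1,\dots,1)$ one has $|\tilde\eta-\eta|=d-|\eta|_0$, hence the spillover equals $\mathbf{E}_{\pi_0}[(d-|\eta|_0)\mathbf{1}(|\eta|_0>s)]\ge (d-s)\pi_0(|\eta|_0>s)$, which scales with $d$, not with $s'$. So a ``uniform in $\tilde\eta$'' bound of the size you need is false, and a purely coordinatewise accounting of the false-positive part $|S_{\tilde\eta}\setminus S_\eta|$ will reproduce the same $d$-factor (the trivial bound $\tilde\eta_j(1-\eta_j)\le 1$ summed over $j$ gives $d\,\pi_0(|\eta|_0>s)$).

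What is missing is an argument that restricts attention to selectors for which the false-positive contribution on $\{|\eta|_0>s\}$ is comparable to the contribution on $\{|\eta|_0\le s\}$. One way to close the gap (and the route taken in \cite{butucea2018variable}) is not to bound the spillover at all for a generic $\tilde\eta$, but rather to compare, coordinate by coordinate, the restricted Bayes problem $\inf_{T}\mathbf{E}[\,|T-\eta_j|\,\mathbf{1}(|\eta|_0\le s)\,]$ with the unrestricted one: on $\{\eta_j=0\}$ the event $\{|\eta|_0\le s\}$ equals $\{|\eta_{-j}|_0\le s\}$, while on $\{\eta_j=1\}$ it equals $\{|\eta_{-j}|_0\le s-1\}$; exploiting the product structure of $\pi_0$ one shows that the loss incurred by inserting the indicator is at most a constant times $\frac{s'}{d}\,\mathbf{P}(\mathrm{Bin}(d-1,s'/d)\ge s)$ per coordinate, and summing gives the $4s'\exp(-(s-s')^2/(2s))$ correction. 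In other words, the correction comes from the \emph{type~II} side only, and one never needs to control $|S_{\tilde\eta}\setminus S_\eta|$ on $\{|\eta|_0>s\}$ for an arbitrary $\tilde\eta$. Your outline does not contain this idea, and without it the argument does not close.
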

 It is shown in \cite{butucea2018variable} that bounds  \eqref{eq:BNST18:1} (with a suitable choice of $s'$) and \eqref{eq:BNST18:2} are accurate enough to give asymptotically sharp conditions of exact and almost full recovery for a selection problem in the Gaussian shift model. However, the behavior of the minimax Hamming risk  with non-Gaussian distributions is not understood. In general, finding $s'$ maximizing the bound in \eqref{eq:BNST18:1} is a complicated problem, and it is not granted that the solution would fit the upper bound, even asymptotically. Note also that the lower bound \eqref{eq:BNST18:1} is not valid when the maximum is taken over the sphere  $\{|\eta|_{0}= s\}$ rather than over the ball $\{|\eta|_{0}\leq s\}$ (the proof in \cite{butucea2018variable}  uses in a crucial way the fact that the set of underlying binary sequences is 
 $\{|\eta|_{0}\leq s\}$). %It seems counter-intuitive as the number of elements in both sets is of the same order of magnitude.

In this section, we develop the analysis of the pivotal selection problem free of these drawbacks. First, we improve upon \cite{butucea2018variable} by finding exact non-asymptotic minimax selector over
the class of all $s$-sparse vectors $\{|\eta|_{0}= s\}$, which is also the Bayes selector with respect to the uniform prior. Noteworthy, it is not a separable selector and, in general, it is not realizable in polynomial time. We provide tight characterizations of its risk in terms of the risk of its tractable counterpart (the scan selector), and of the best separable selector risk $\Psi_{\rm sep}$ under the MLR assumption. 

\subsection{Solution for $s = 1$ and block prior lower bound}
\label{sec:block}

In order to get more intuition, we start with the case $s=1$, so that $|\eta|_{0}=1$.  
In this case, we observe $X_{1},\dots,X_{d}$, where exactly one of $X_{i}$'s has density $f_{1}$ while all other have density $f_{0}$. 
Let $\pi_1$ be the uniform prior distribution on all $\eta$ with $|\eta|_{0}=1$, i.e. the distribution of $\eta=\eta({\sf k})$ with components
$$ \eta_{i}({\sf k}) = \mathbf{1}({\sf k}=i), \  \ i=1,\dots,d,$$
where $\sf k$ is a random variable uniformly distributed on $\{1,\dots,d\}$. Consider the selector $\eta^{*}$ with components
\begin{equation}\label{eq:lower_bound_01}
\eta^{*}_{j} = \mathbf{1}\left\{ f_{1}(X_{j})\prod_{i \neq j}f_{0}(X_{i}) \geq  \sum_{k \neq j}\left(f_{1}(X_{k})\prod_{i \neq k}f_{0}(X_{i})\right) \right\}
, \ \ 
\end{equation}
for $j=1,\dots,d$.
\begin{theorem}\label{thm:lower:sparse:one}
Selector defined by \eqref{eq:lower_bound_01} is minimax on the set $\{|\eta|_{0} = 1\}$
and Bayes with respect to the prior $\pi_1$:
\begin{align*}
\underset{\tilde{\eta}}{\inf} \underset{|\eta|_{0} = 1}{\sup} \mathbf{E}_{\eta} |\tilde{\eta} - \eta| 
&= \underset{|\eta|_{0} = 1}{\sup}\mathbf{E}_{\eta} |\eta^{*} - \eta|
= \mathbf{E}_{e_1} |\eta^{*} - e_1|,
\\
\underset{\tilde{\eta}}{\inf} \ \mathbb{E}_{\pi_1} \mathbf{E}_{\eta} |\tilde{\eta} - \eta| 
&= \mathbb{E}_{\pi_1}\mathbf{E}_{\eta} |\eta^{*} - \eta|
= \mathbf{E}_{e_1} |\eta^{*} - e_1|,
\end{align*}
where $\underset{\tilde{\eta}}{\inf}$ is the infimum over all selectors, $ \mathbb{E}_{\pi_1}$ denotes the expectation over $\eta$ with distribution $\pi_1$, and $e_1=(1,0,\dots,0)$ is the first canonical basis vector
of $\mathbf{R}^d$. 
\end{theorem}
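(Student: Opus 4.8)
The plan is to establish both chains of equalities by a standard Bayes-risk argument, exploiting the symmetry of the uniform prior $\pi_1$ and the fact that the Hamming loss decomposes coordinatewise. First I would observe that, since the loss is a sum of indicators $\sum_j \mathbf{1}(\tilde\eta_j \neq \eta_j)$, the Bayes risk $\mathbb{E}_{\pi_1}\mathbf{E}_\eta|\tilde\eta-\eta|$ can be minimized coordinate by coordinate: the optimal $\tilde\eta_j$ given the data is the Bayes classifier for the event $\{\eta_j = 1\}$ versus $\{\eta_j=0\}$. Under $\pi_1$, the posterior probability that $\eta_j=1$ given $X_1,\dots,X_d$ is proportional to $f_1(X_j)\prod_{i\neq j}f_0(X_i)$, while the posterior probability that $\eta_j = 0$ is proportional to $\sum_{k\neq j} f_1(X_k)\prod_{i\neq k} f_0(X_i)$ (both with the same normalizing constant $\sum_{k=1}^d f_1(X_k)\prod_{i\neq k}f_0(X_i)$, using that all $d$ values of $\sf k$ are equally likely a priori). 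Hence the coordinatewise Bayes rule is exactly the thresholding in \eqref{eq:lower_bound_01}, with ties broken in favor of $1$; this proves $\underset{\tilde\eta}{\inf}\,\mathbb{E}_{\pi_1}\mathbf{E}_\eta|\tilde\eta-\eta| = \mathbb{E}_{\pi_1}\mathbf{E}_\eta|\eta^*-\eta|$.

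Next I would pass from the Bayes statement to the minimax statement. The general inequality $\underset{\tilde\eta}{\inf}\underset{|\eta|_0=1}{\sup}\mathbf{E}_\eta|\tilde\eta-\eta| \geq \underset{\tilde\eta}{\inf}\,\mathbb{E}_{\pi_1}\mathbf{E}_\eta|\tilde\eta-\eta|$ always holds since a supremum over a set dominates an average over that set. For the reverse direction it suffices to show that $\eta^*$ has constant risk on $\{|\eta|_0=1\}$, i.e. $\mathbf{E}_\eta|\eta^*-\eta|$ does not depend on which coordinate of $\eta$ equals $1$. This follows from a permutation-invariance argument: the selector $\eta^*$ is equivariant under permutations of the indices $\{1,\dots,d\}$ (permuting the $X_i$'s permutes the components of $\eta^*$ in the same way, because the defining expressions are symmetric in the $X_i$ with $i\neq j$), and the model $\mathbf{P}_\eta$ is likewise permutation-covariant. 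Therefore $\mathbf{E}_{\eta(\sf k)}|\eta^*-\eta(\sf k)| = \mathbf{E}_{e_1}|\eta^*-e_1|$ for every $\sf k$. Combining, $\underset{|\eta|_0=1}{\sup}\mathbf{E}_\eta|\eta^*-\eta| = \mathbf{E}_{e_1}|\eta^*-e_1| = \mathbb{E}_{\pi_1}\mathbf{E}_\eta|\eta^*-\eta| = \underset{\tilde\eta}{\inf}\,\mathbb{E}_{\pi_1}\mathbf{E}_\eta|\tilde\eta-\eta| \leq \underset{\tilde\eta}{\inf}\underset{|\eta|_0=1}{\sup}\mathbf{E}_\eta|\tilde\eta-\eta|$, which closes the circle and yields all the claimed equalities.

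The step I expect to require the most care is the rigorous reduction of the Bayes problem to coordinatewise minimization together with the correct handling of ties and of the randomization variable $\zeta$. One must check that the coordinatewise Bayes rule, applied simultaneously for all $j$, is indeed a valid selector (it is a measurable deterministic function of $(X_1,\dots,X_d)$, so $\zeta$ plays no role and randomization cannot help), and that measure-zero issues — the densities $f_0,f_1$ being defined only $\mu$-a.e., and the boundary set where the inequality in \eqref{eq:lower_bound_01} is an equality — do not affect the Bayes risk. The tie-breaking convention ($\geq$ rather than $>$) is consistent with choosing either label on the boundary, since both incur the same posterior loss there, so it is legitimate; I would note this explicitly. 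Everything else is bookkeeping with the permutation symmetry, which is routine.
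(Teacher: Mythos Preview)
Your proposal is correct and follows essentially the same route as the paper: coordinatewise minimization of the Bayes risk via a Neyman--Pearson/posterior-mode argument to identify $\eta^*$ as Bayes, then permutation invariance to show $\eta^*$ has constant risk on $\{|\eta|_0=1\}$, and finally the standard Bayes-equals-constant-risk-implies-minimax closing. The paper writes the coordinatewise step as an explicit integral and invokes Neyman--Pearson, whereas you phrase it via posterior probabilities, but these are the same computation; your remarks on randomization and ties are slightly more explicit than the paper's brief ``fix $\zeta$'' comment, but otherwise the arguments coincide.
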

\begin{proof}
Bounding the minimax risk by the Bayes risk we have
\begin{equation}\label{eq:lower_bound_0}
\underset{\tilde{\eta}}{\inf} \underset{|\eta|_{0} = 1}{\sup} \mathbf{E}_{\eta} |\tilde{\eta} - \eta| \geq \underset{\tilde{\eta}}{\inf} \ \mathbb{E}_{\pi_1}\mathbf{E}_{\eta} |\tilde{\eta} - \eta|.
\end{equation}
In the rest of this proof, we fix the randomization $\zeta$  and we consider $\underset{\tilde{\eta}}{\inf}$ as infimum over non-randomized selectors $\tilde\eta (X_1,\dots,X_d)$. The obtained lower bound for the Bayes risk obviously remains true with the infimum over randomized selectors as this bound does not depend on $\zeta$, and $\zeta$ is independent of $(X_1,\dots,X_d)$.

Moving in \eqref{eq:lower_bound_0} the infimum under the sum and using the definition of $\pi_1$ we obtain
$$
 \underset{\tilde{\eta}}{\inf}\, \mathbb{E}_{\pi_1}\mathbf{E}_{\eta} |\tilde{\eta} - \eta| \geq  \sum_{j=1}^{d} \underset{\tilde{\eta}_{j}}{\inf}\, {\rm E}_{\sf k}\mathbf{E}_{\eta({\sf k})}|\tilde{\eta}_{j} - \mathbf{1}({\sf k}=j)|,
$$
where ${\rm E}_{\sf k }$ denotes the expectation with respect to random variable ${\sf k}$.
We have
\begin{equation}\label{eq:lower_bound_1}
{\rm E}_{\sf k }\mathbf{E}_{\eta({\sf k})}|\tilde{\eta}_{j} - \mathbf{1}({\sf k}=j)| = \frac{1}{d}\left(\mathbf{E}_{e_j}(1-\tilde{\eta}_{j}) + \sum_{k \neq j}\mathbf{E}_{e_k}(\tilde{\eta}_{j})\right),
\end{equation}
where $e_k$ is the $k$th canonical basis vector of $\mathbf{R}^d$, so that $\mathbf{E}_{e_k}$ is the expectation with respect to the density $f_{1}(X_{k})\prod_{i \neq k}f_{0}(X_{i})$.
Arguing as in the Neyman-Pearson lemma, we conclude that $ \eta_{j}^{*}$ 
minimizes \eqref{eq:lower_bound_1} among all $\{0,1\}$-valued statistics, and hence
\begin{equation}\label{eq:lower_bound_02}
\underset{\tilde{\eta}}{\inf}\, \mathbb{E}_{\pi_1}\mathbf{E}_{\eta} |\tilde{\eta} - \eta| \geq  \sum_{j=1}^{d}  {\rm E}_{\sf k }\mathbf{E}_{\eta({\sf k})}|\eta^{*}_{j} - \mathbf{1}({\sf k}=j)| = \mathbb{E}_{\pi_1}\mathbf{E}_{\eta} |\eta^{*} - \eta|.
\end{equation}
Next, notice that, by permutation invariance, the distribution of $|\eta^*-\eta|$ under ${\mathbf P}_\eta$ is the same for all $\eta$ such that $|\eta|_{0} = 1$. Thus,
\begin{equation}\label{eq:lower_bound_3}
\mathbb{E}_{\pi_1}\mathbf{E}_{\eta} |\eta^{*} - \eta|
= \mathbf{E}_{e_1} |\eta^{*} - e_1| = \underset{|\eta|_{0} = 1}{\sup} \mathbf{E}_{\eta} |\eta^{*} - \eta|.
\end{equation}
The theorem now follows by combining 
\eqref{eq:lower_bound_02} and \eqref{eq:lower_bound_3}.
\end{proof}
%
%\begin{remark}
% Based on the fact that 
%$$
%\sum_{k\neq j}\frac{f_{1}}{f_{0}}(X_{k}) > \frac{f_{1}}{f_{0}}\left(\underset{k \neq j}{\max}X_{k}\right),
%$$
%and the MLR property, it is worth noticing that $|\eta^{*}|_{0} \leq 1 $. Hence the Bayesian selector $\eta^{*}$ selects at most one component (the maximum). The selection only happens provided that the maximum is large enough {(\color{red} what it means?)}. This result is stronger than the classical lower bound  {(\color{red} what is this bound?)} since we do not constraint the selectors to have sparsity exactly equal to $1$ nor to be separable.
%\end{remark}
%

 Theorem \ref{thm:lower:sparse:one} is valid for $X_i$'s of arbitrary nature and does not require the MLR property. Assume now that $X_i$'s are real-valued and the MLR property as stated
above holds. Note first that in this case the values $f_0(X_i)$ are $\mathbf{P}_\eta$-a.s. positive for any $\eta$, so that the  components of selector \eqref{eq:lower_bound_01} can be written as
 $$
 \eta_{j}^{*} = \mathbf{1}\left\{ \frac{f_{1}}{f_{0}}(X_{j}) \geq \sum_{k \neq j}\frac{f_{1}}{f_{0}}(X_{k})\right\}, \ \ j=1,\dots,d.
 $$
 On the other hand, $\mathbf{P}_\eta$-a.s.  for any $\eta$ we have:
 $$
 \sum_{k\neq j}\frac{f_{1}}{f_{0}}(X_{k}) > \frac{f_{1}}{f_{0}}\left(\underset{k \neq j}{\max}\,X_{k}\right).
 $$
 Using the MLR property we conclude that $\eta_{j}^{*}\le \mathbf{1}(X_{j} > \underset{k\ne j}{\max} \,X_{k})$ ($\mathbf{P}_\eta$-a.s. for any $\eta$ ). 
 Therefore, under the MLR property we have $|\eta^{*}|_{0} \leq 1 $ ($\mathbf{P}_\eta$-a.s. for any $\eta$ ), that is, the
minimax estimator  $\eta^{*}$ selects at most one  of $X_i$'s\footnote{Here and in what follows, we say that an estimator $\widehat\eta$ selects $X_i$ if its $i$th component $\widehat\eta_i$ is
equal to 1.}. Moreover, if one  of $X_i$'s is selected, it must be
the maximal order statistic of the sample. In the sequel, we often tacitly
assume (without mentioning it explicitly) that the considered relations between
random variables hold $\mathbf{P}_\eta$-a.s. for any $\eta$.

Under the MLR property, Theorem 2 implies the following lower
bound.
 
\begin{proposition}\label{prop:lower-bound-s=1}
Assume that 
the MLR property holds. Then
\begin{equation}\label{lower-bound-s=1}
\underset{\tilde{\eta}}{\inf} \underset{|\eta|_{0} = 1}{\sup} \mathbf{E}_{\eta} |\tilde{\eta} - \eta| \geq \mathbf{P}_{e_1}\left( X_{1} \leq \underset{j \geq 2}{\max} \,X_{j} \right).
\end{equation}
Furthermore,
$$
\underset{\tilde{\eta}}{\inf} \underset{|\eta|_{0} = 1}{\sup} \mathbf{E}_{\eta} |\tilde{\eta} - \eta| \geq (1-e^{-1})F_{1}\left( F_{0}^{-1}(1-1/(d-1))\right),
$$
where $F_{0}$ and $F_{1}$ are cumulative distribution functions corresponding to $f_{0}$ and $f_{1}$, respectively, and $F_{0}^{-1}(t)=\min\{v: F_0(v)=t\}$ for $t\in (0,1)$ is the inverse of $F_0$.
\end{proposition}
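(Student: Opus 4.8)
The plan is to combine Theorem~\ref{thm:lower:sparse:one} with the structural consequences of the MLR property established immediately before the proposition. By Theorem~\ref{thm:lower:sparse:one}, the minimax risk on $\{|\eta|_{0}=1\}$ equals $\mathbf{E}_{e_1}|\eta^{*}-e_1|$, so it suffices to bound this quantity from below. Under the MLR property we already know that, $\mathbf{P}_{e_1}$-a.s., $|\eta^{*}|_{0}\le 1$ and $\eta^{*}_{j}\le \mathbf{1}(X_{j}>\max_{k\ne j}X_{k})$ for every $j$.

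First I would establish the pointwise chain of inequalities $|\eta^{*}-e_1|\ge \mathbf{1}(\eta^{*}_{1}\ne 1)\ge \mathbf{1}(X_{1}\le \max_{j\ge 2}X_{j})$, valid $\mathbf{P}_{e_1}$-a.s. Writing $|\eta^{*}-e_1|=(1-\eta^{*}_{1})+\sum_{j\ge 2}\eta^{*}_{j}$: if $\eta^{*}_{1}=1$, then $|\eta^{*}|_{0}\le 1$ forces $\eta^{*}_{j}=0$ for all $j\ge 2$, so the left-hand side is $0$; if $\eta^{*}_{1}=0$, the left-hand side is at least $1$. This gives the first inequality, since $\eta_1^*\in\{0,1\}$. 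The second inequality is just the contrapositive of $\eta^{*}_{1}\le \mathbf{1}(X_{1}>\max_{j\ge 2}X_{j})$. Taking $\mathbf{E}_{e_1}$ yields \eqref{lower-bound-s=1}.

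For the second displayed bound I would lower bound $\mathbf{P}_{e_1}(X_{1}\le \max_{j\ge 2}X_{j})$ by conditioning on $X_{1}$. Under $\mathbf{P}_{e_1}$, $X_{1}$ has c.d.f. $F_{1}$ while $X_{2},\dots,X_{d}$ are i.i.d.\ with c.d.f.\ $F_{0}$ and independent of $X_{1}$; hence $\mathbf{P}_{e_1}(\max_{j\ge 2}X_{j}\ge x)=1-\prod_{j\ge 2}\mathbf{P}(X_j<x)\ge 1-F_{0}(x)^{d-1}$ for every $x$, so that $\mathbf{P}_{e_1}(X_{1}\le \max_{j\ge 2}X_{j})\ge \int\bigl(1-F_{0}(x)^{d-1}\bigr)\,dF_{1}(x)$. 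Restricting this integral to $\{x\le q\}$ with $q=F_{0}^{-1}(1-1/(d-1))$, on that set $F_{0}(x)\le 1-1/(d-1)$, whence $F_{0}(x)^{d-1}\le (1-1/(d-1))^{d-1}\le e^{-1}$ and $1-F_{0}(x)^{d-1}\ge 1-e^{-1}$; since $1-F_{0}(x)^{d-1}\ge 0$ everywhere, we conclude $\mathbf{P}_{e_1}(X_{1}\le \max_{j\ge 2}X_{j})\ge (1-e^{-1})F_{1}(q)$, which is the claim.

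The argument is short, and the only points requiring a little care are the behaviour of $F_{0}$ at $q=F_{0}^{-1}(1-1/(d-1))$ (monotonicity and the precise meaning of the generalized inverse) and the elementary bound $(1-1/n)^{n}\le e^{-1}$ for $n=d-1\ge 2$; the genuine content is entirely in the already-established fact that, under MLR, the Bayes/minimax selector $\eta^{*}$ can only pick the sample maximum, so that under $e_1$ it necessarily errs whenever $X_{1}$ fails to be the maximum of the sample.
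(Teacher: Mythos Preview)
Your proof is correct and follows essentially the same approach as the paper: both reduce to $\mathbf{E}_{e_1}(1-\eta^{*}_{1})$ and then use that under MLR $\eta^{*}_{1}=1$ forces $X_{1}>\max_{j\ge 2}X_{j}$, while for the second bound both split at the quantile $F_{0}^{-1}(1-1/(d-1))$ and use $(1-1/(d-1))^{d-1}\le e^{-1}$. Your phrasing via conditioning on $X_{1}$ and integrating is a slight repackaging of the paper's independence factorization, but the content is the same.
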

\begin{proof}
Using Theorem~\ref{thm:lower:sparse:one} we get
$$
\underset{\tilde{\eta}}{\inf} \underset{|\eta|_{0} = 1}{\sup} \mathbf{E}_{\eta} |\tilde{\eta} - \eta| \geq \frac{1}{d}\sum_{j=1}^{d}\sum_{k=1}^{d}\mathbf{E}_{e_k}|\eta^{*}_{j} - \mathbf{1}(k=j)|.
$$
It follows that
$$
\underset{\tilde{\eta}}{\inf} \underset{|\eta|_{0} = 1}{\sup} \mathbf{E}_{\eta} |\tilde{\eta} - \eta| \geq \frac{1}{d}\sum_{j=1}^{d}\mathbf{E}_{e_j}|\eta^{*}_{j} - 1| = \mathbf{P}_{e_1}\left( \frac{f_{1}}{f_{0}}(X_{1}) \leq \sum_{j \geq 2}  \frac{f_{1}}{f_{0}}(X_{j}) \right).
$$
Using the fact that 
$$
\sum_{j \geq 2}  \frac{f_{1}}{f_{0}}(X_{j}) > \frac{f_{1}}{f_{0}}\big(\underset{j \geq 2}{\max}\,X_{j}\big),
$$
and the MLR property we get \eqref{lower-bound-s=1}.

Next, set $\alpha= F_{0}^{-1}(1-1/(d-1))$. We have
$$
\mathbf{P}_{e_1}\left( X_{1} \leq \underset{j \geq 2}{\max}\, X_{j} \right) \geq \mathbf{P}_{e_1}\left( X_{1} \leq \alpha \right)\mathbf{P}_{e_1}\left( \alpha \leq \underset{j \geq 2}{\max} \,X_{j} \right) = F_1(\alpha) \mathbf{P}_{e_1}\left( \alpha \leq \underset{j \geq 2}{\max}\, X_{j} \right),
$$
where
$$
\mathbf{P}_{e_1}\left( \alpha \leq \underset{j \geq 2}{\max}\, X_{j} \right) = 1 - (F_{0}(\alpha))^{d-1} = 1 - (1-1/(d-1))^{d-1} \geq 1-e^{-1}.
$$
\end{proof}
Proposition \ref{prop:lower-bound-s=1} can be used to obtain a lower bound in the case of general
sparsity $s$ via a block sparsity argument. The strategy is to divide the data
into $s$ blocks and to apply Proposition \ref{prop:lower-bound-s=1} on each block separately. It leads to
the following lower bound.

\begin{proposition}\label{proposition:lower:sparse:1}
Let  $s < d/2$. Assume that 
the MLR property holds. Then
$$
\underset{\tilde{\eta}}{\inf} \underset{|\eta|_{0} = s}{\sup} \mathbf{E}_{\eta} |\tilde{\eta} - \eta| \geq (1-e^{-1})sF_{1}\left( F_{0}^{-1}(1-1/(\lfloor d/s\rfloor-1))\right).
$$
\end{proposition}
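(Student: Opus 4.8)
The plan is to reduce the sparsity-$s$ problem to $s$ independent sparsity-$1$ problems by a block decomposition. First I would partition the index set $\{1,\dots,d\}$ into $s$ disjoint blocks $B_1,\dots,B_s$, where $B_1,\dots,B_{s-1}$ have size $m:=\lfloor d/s\rfloor$ and $B_s$ collects the remaining indices (of size at least $m$). On each block $B_\ell$ I put the prior $\pi^{(\ell)}$ that makes exactly one coordinate in $B_\ell$ equal to $1$, uniformly chosen; the product prior $\pi=\pi^{(1)}\otimes\cdots\otimes\pi^{(s)}$ is then supported on $\{|\eta|_0=s\}$, so bounding the minimax risk from below by the Bayes risk against $\pi$ is legitimate. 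Since the Hamming loss is additive over blocks, $\mathbf{E}_\eta|\tilde\eta-\eta|=\sum_{\ell=1}^s \mathbf{E}_\eta|(\tilde\eta-\eta)_{B_\ell}|$, and the observations in distinct blocks are independent, the Bayes risk decomposes as a sum of $s$ block-wise Bayes risks.

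Next, for each block I invoke Proposition~\ref{prop:lower-bound-s=1}. The point is that within block $B_\ell$, conditionally on the configuration outside $B_\ell$, we face exactly the sparsity-$1$ pivotal selection problem on $|B_\ell|\ge m$ variables with densities $f_0,f_1$, and the best a selector can do on that block (even with full knowledge of the other blocks, which only helps) is bounded below by the sparsity-$1$ minimax risk on $|B_\ell|$ variables. By the monotonicity in the number of variables of the quantity $F_1(F_0^{-1}(1-1/(\cdot-1)))$ — more variables make the maximum order statistic larger, hence harder to beat — each block contributes at least $(1-e^{-1})F_1(F_0^{-1}(1-1/(m-1)))$. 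Summing over the $s$ blocks gives the claimed bound $(1-e^{-1})sF_1(F_0^{-1}(1-1/(\lfloor d/s\rfloor-1)))$. The condition $s<d/2$ guarantees $m=\lfloor d/s\rfloor\ge 2$, so that $m-1\ge 1$ and the inverse $F_0^{-1}(1-1/(m-1))$ is well defined.

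The main obstacle, and the step requiring the most care, is making precise the claim that a selector's expected block-wise loss is bounded below by the sparsity-$1$ minimax risk on that block. One has to argue that even an oracle revealing all $X_i$ for $i\notin B_\ell$ cannot help: formally, one conditions on those variables and on the randomization $\zeta$, observes that $\eta^{(\ell)}$ restricted to $B_\ell$ is still distributed as the sparsity-$1$ prior and independent of the conditioning, and that the conditional law of $(X_i)_{i\in B_\ell}$ given $\eta_{B_\ell}$ is the sparsity-$1$ model; then Proposition~\ref{prop:lower-bound-s=1} (whose proof goes through verbatim conditionally) applies to lower bound the conditional Bayes risk. Integrating back preserves the bound. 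A minor point to handle cleanly is the asymmetry of block sizes: since the function is increasing in the number of variables and $B_s$ is the largest block, replacing every $|B_\ell|$ by the smallest value $m=\lfloor d/s\rfloor$ only decreases the bound, which is why the final expression features $\lfloor d/s\rfloor$.
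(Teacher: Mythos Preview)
Your proposal is correct and follows essentially the same block-prior strategy as the paper. The only cosmetic difference is that the paper partitions $\{1,\dots,s\lfloor d/s\rfloor\}$ into $s$ \emph{equal} blocks of size $\lfloor d/s\rfloor$ (simply discarding the leftover indices), which sidesteps your monotonicity argument for the oversized last block; the reduction to block-wise selectors is phrased via Jensen's inequality rather than conditioning, but the content is the same.
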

The proof of this proposition is given in the Appendix.
As a consequence, we derive the following corollary.
\begin{corollary}\label{cor:almost_rec}
Assume that 
the MLR property holds and  
$s < d/2$. If there exists $A>1$ such that $F^{-1}_{0}(1-1/(\lfloor d/s\rfloor-1)) > F^{-1}_{1}(1/A)$ 
then
$$
\underset{\tilde{\eta}}{\inf} \underset{|\eta|_{0} = s}{\sup} \frac1s \mathbf{E}_{\eta} |\tilde{\eta} - \eta| \geq \frac{1-e^{-1}}{A}, 
$$
that is, the condition $F^{-1}_{0}(1-1/(\lfloor d/s_d\rfloor-1)) \le F^{-1}_{1}(1/A)$ on the sequence $(s_d)_{d\ge 1}$ for all $d$ large enough is necessary for almost full recovery. Similarly,
the condition $F^{-1}_{0}(1-1/(\lfloor d/s_d\rfloor-1)) \leq F^{-1}_{1}(1/s_d)$  on the sequence $(s_d)_{d\ge 1}$ for all $d$ large enough is necessary for exact recovery.
\end{corollary}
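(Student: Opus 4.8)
The plan is to deduce Corollary~\ref{cor:almost_rec} directly from Proposition~\ref{proposition:lower:sparse:1}, so there is essentially no probabilistic work left to do: the entire content is a monotonicity bookkeeping argument relating the quantity $F_1\!\left(F_0^{-1}(1-1/(\lfloor d/s\rfloor-1))\right)$ to the stated recovery thresholds. First I would recall from Proposition~\ref{proposition:lower:sparse:1} that, under the MLR property and for $s<d/2$,
$$
\inf_{\tilde\eta}\ \sup_{|\eta|_0=s}\frac1s\,\mathbf{E}_\eta|\tilde\eta-\eta|\ \geq\ (1-e^{-1})\,F_1\!\left(F_0^{-1}\!\left(1-\tfrac{1}{\lfloor d/s\rfloor-1}\right)\right).
$$
Write $\alpha=F_0^{-1}(1-1/(\lfloor d/s\rfloor-1))$ for brevity. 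The hypothesis $\alpha>F_1^{-1}(1/A)$ together with monotonicity (non-decreasingness) of $F_1$ gives $F_1(\alpha)\geq F_1\!\left(F_1^{-1}(1/A)\right)\geq 1/A$, where the last inequality uses the standard fact that $F_1(F_1^{-1}(t))\geq t$ for any CDF $F_1$ and $t\in(0,1)$ (with the generalized inverse defined as in Proposition~\ref{prop:lower-bound-s=1}). Plugging this into the displayed lower bound yields $\inf_{\tilde\eta}\sup_{|\eta|_0=s}\frac1s\mathbf{E}_\eta|\tilde\eta-\eta|\geq (1-e^{-1})/A$, which is the first assertion.

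For the contrapositive/necessity statements, I would argue as follows. If almost full recovery holds for the sequence $(s_d)$, then by definition $\sup_{|\eta|_0=s_d}\frac{1}{s_d}\mathbf{E}_\eta|\widehat\eta-\eta|\to 0$ for some selector, hence the minimax quantity on the left tends to $0$. But the first part shows that whenever $F_0^{-1}(1-1/(\lfloor d/s_d\rfloor-1))>F_1^{-1}(1/A)$ the minimax quantity is bounded below by the positive constant $(1-e^{-1})/A$; so this strict inequality cannot hold for infinitely many $d$, i.e.\ for all $d$ large enough we must have $F_0^{-1}(1-1/(\lfloor d/s_d\rfloor-1))\leq F_1^{-1}(1/A)$. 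This is the stated necessary condition. For exact recovery the argument is the same but sharper: exact recovery means $\sup_{|\eta|_0=s_d}\mathbf{E}_\eta|\widehat\eta-\eta|\to0$ (without the $1/s_d$ factor), so in particular $\sup_{|\eta|_0=s_d}\frac{1}{s_d}\mathbf{E}_\eta|\widehat\eta-\eta|\to0$ as well, and we may take $A=A_d=s_d$ in the first part. Then the necessary condition becomes $F_0^{-1}(1-1/(\lfloor d/s_d\rfloor-1))\leq F_1^{-1}(1/s_d)$ for all $d$ large enough — here one checks that $(1-e^{-1})/s_d$ is a valid lower bound, which still forces the minimax risk to exceed a fixed positive constant unless the complementary inequality holds, because if $F_0^{-1}(1-1/(\lfloor d/s_d\rfloor-1))>F_1^{-1}(1/s_d)$ held along a subsequence then $\mathbf{E}_\eta|\widehat\eta-\eta|\geq s_d\cdot(1-e^{-1})/s_d=1-e^{-1}$ along that subsequence, contradicting exact recovery.

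The only genuinely delicate point — and the step I would be most careful about — is the interplay between the generalized inverse $F_1^{-1}(t)=\min\{v:F_1(v)=t\}$ as defined in the excerpt and the inequality $F_1(F_1^{-1}(t))\geq t$. With that particular definition one actually has $F_1(F_1^{-1}(t))= t$ whenever the level $t$ is attained by $F_1$, and $\geq t$ by right-continuity otherwise; in either case $F_1(\alpha)\geq 1/A$ follows from $\alpha>F_1^{-1}(1/A)$ by monotonicity, since $\alpha>F_1^{-1}(1/A)$ implies $F_1(\alpha)\geq F_1(F_1^{-1}(1/A))\geq 1/A$. I would state this as a one-line lemma or simply inline the verification, taking care that the strictness in the hypothesis is what makes the non-strict monotonicity of $F_1$ sufficient, so no continuity assumption on $F_1$ is needed. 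Everything else is immediate substitution into Proposition~\ref{proposition:lower:sparse:1}, and no further estimates are required. \qed
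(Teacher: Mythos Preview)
Your proposal is correct and matches the paper's intended argument: the paper presents Corollary~\ref{cor:almost_rec} as an immediate consequence of Proposition~\ref{proposition:lower:sparse:1} without giving a separate proof, and your derivation—dividing the bound of Proposition~\ref{proposition:lower:sparse:1} by $s$, then using monotonicity of $F_1$ together with the hypothesis $\alpha>F_1^{-1}(1/A)$ to get $F_1(\alpha)\ge 1/A$, and finally taking $A=s_d$ for the exact-recovery statement—is exactly the unpacking the reader is expected to supply. Your care with the generalized inverse and the contrapositive phrasing for the necessity claims is appropriate and fills in all the details the paper leaves implicit.
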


\subsection{ Minimax and Bayes selectors in general case}
\label{sec:general-lower}

We consider now an alternative approach to the block prior method developed above.
%of subsection \ref{sec:block}. 
It is based on using the uniform prior on the set  of all  $s$-sparse binary vectors $\eta$. 
%Recall that if $\eta\in\Theta_d(s)$, we observe $X_{1},\dots,X_{d}$ where $s$ random variables $X_{i}$ have density $f_{1}$ while the remaining $d-s$ observations have density $f_{0}$. In this section, 
Denote by $\pi$ the uniform distribution on $\Theta_d(s)$, 
that is, we consider that the random vector $\eta=\eta( {\sf S})$ is distributed according to $\pi$ if
\begin{equation}\label{unif}
	\eta_{i}( {\sf S}) = \mathbf{1}(i\in {\sf S}),\quad  i=1,\dots,d, 
\end{equation} 
where ${\sf S}$ is uniformly distributed on $\mathcal{A}:=\{ S \subset \{1,\dots,d\}: |S|=s\}$.   For any $s\ge 2$ and any set $S\in \mathcal{A}$, we define the selector $\eta^{**}$ 
with components
$$
\eta^{**}_{j} = \mathbf{1}\Big\{ f_{1}(X_{j}) G_1(X_i: i\ne j)%\sum_{|S|=s-1,j \not \in S}\Big(\prod_{k  \in S}f_{1}(X_{k})\prod_{k \not \in S}f_{0}(X_{k})\Big) 
\geq  f_{0}(X_{j}) G_0(X_i: i\ne j) %\sum_{|S|=s,j \not \in S}\Big(\prod_{k  \in S}f_{1}(X_{k})\prod_{k \not \in S}f_{0}(X_{k})\Big) 
\Big\}
$$
for $j=1,\dots,d$,
where
$$
G_1(X_i: i\ne j) = \sum_{S\in\mathcal{A}: |S|=s-1,j \not \in S}\Big(\prod_{k  \in S}f_{1}(X_{k})\prod_{k \not \in S}f_{0}(X_{k})\Big),
$$
$$
G_0(X_i: i\ne j) =\sum_{S\in\mathcal{A}:|S|=s,j \not \in S}\Big(\prod_{k  \in S}f_{1}(X_{k})\prod_{k \not \in S}f_{0}(X_{k})\Big).
$$
If all the values $f_0(X_i)$ and $f_1(X_i)$ are positive we can write $\eta_{j}^{**}$ in the form
\begin{equation}\label{eta-star-star}
 \eta_{j}^{**} = \mathbf{1}\left\{ \frac{f_{1}}{f_{0}}(X_{j}) \geq \frac{ \sum_{S: |S|=s, j \not \in S}\prod_{k \in S}\frac{f_{1}}{f_{0}}(X_{k}) }{\sum_{S: |S|=s-1, j \not \in S}\prod_{k \in S}\frac{f_{1}}{f_{0}}(X_{k})}  \right\}   
\end{equation}
for $j=1,\dots,d$.
%We will say that $\eta^{**}$  selects $X_j$ iff $\eta^{**}_j=1$.

\begin{theorem}\label{thm:lower:sparse:two}
Let $s\ge2$. Then
the selector $\eta^{**}$  is minimax on the set $\{\eta: |\eta|_{0} = s\}$ and Bayes with respect to the prior $\pi$:
\begin{align*}
\underset{\tilde{\eta}}{\inf} \underset{|\eta|_{0} = s}{\sup} \mathbf{E}_{\eta} |\tilde{\eta} - \eta| 
&= \underset{|\eta|_{0} = s}{\sup}\mathbf{E}_{\eta} |\eta^{**} - \eta|
= \mathbf{E}_{e(s)} |\eta^{**} - e(s)|,
\\
\underset{\tilde{\eta}}{\inf} \ \mathbb{E}_{\pi} \mathbf{E}_{\eta} |\tilde{\eta} - \eta| 
&= \mathbb{E}_{\pi}\mathbf{E}_{\eta} |\eta^{**} - \eta|
= \mathbf{E}_{e(s)} |\eta^{**} - e(s)|,
\end{align*}
where  $\underset{\tilde{\eta}}{\inf}$ is the infimum over all selectors, $\mathbb{E}_{\pi}$ denotes the expectation over $\eta$ with distribution $\pi$, 
and 
$$e(s)=(\underbrace{1,\dots,1}_{s},\underbrace{0,\dots,0}_{d-s}).
$$ 
\end{theorem}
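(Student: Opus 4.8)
The plan is to mirror the proof of Theorem~\ref{thm:lower:sparse:one}, replacing the uniform prior $\pi_1$ on singletons by the uniform prior $\pi$ on $\mathcal{A}$. First I would bound the minimax risk from below by the Bayes risk,
\[
\underset{\tilde{\eta}}{\inf}\ \underset{|\eta|_{0}=s}{\sup}\mathbf{E}_{\eta}|\tilde{\eta}-\eta|\ \geq\ \underset{\tilde{\eta}}{\inf}\ \mathbb{E}_{\pi}\mathbf{E}_{\eta}|\tilde{\eta}-\eta|,
\]
and, as before, fix the randomization $\zeta$ so that it suffices to take the infimum over non-randomized selectors. The Hamming loss decomposes coordinatewise, so the Bayes risk equals $\sum_{j=1}^{d}\mathbb{E}_{\pi}\mathbf{E}_{\eta}|\tilde{\eta}_{j}-\eta_{j}(\sf S)|$, and moving the infimum inside the sum gives a lower bound $\sum_{j=1}^{d}\inf_{\tilde{\eta}_{j}}\mathbb{E}_{\pi}\mathbf{E}_{\eta}|\tilde{\eta}_{j}-\mathbf{1}(j\in {\sf S})|$, where now each $\tilde\eta_j$ is allowed to depend on the whole sample $(X_1,\dots,X_d)$.

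The key computation is to identify, for each fixed $j$, the $\{0,1\}$-valued statistic minimizing $\mathbb{E}_{\pi}\mathbf{E}_{\eta}|\tilde{\eta}_{j}-\mathbf{1}(j\in {\sf S})|$. Writing this expectation out, the contribution from $\tilde\eta_j=1$ is weighted by $\sum_{S\in\mathcal{A}:\,j\notin S}$ of the joint density $\prod_{k\in S}f_1(X_k)\prod_{k\notin S}f_0(X_k)$, i.e.\ by $f_0(X_j)G_0(X_i:i\ne j)$, while the contribution from $\tilde\eta_j=0$ is weighted by $\sum_{S\in\mathcal{A}:\,j\in S}$ of the joint density, which, factoring out $f_1(X_j)$ and reindexing $S'=S\setminus\{j\}$, equals $f_1(X_j)G_1(X_i:i\ne j)$. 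A Neyman--Pearson argument then shows that $\eta^{**}_j=\mathbf{1}\{f_1(X_j)G_1(X_i:i\ne j)\ge f_0(X_j)G_0(X_i:i\ne j)\}$ is a minimizer, so
\[
\underset{\tilde{\eta}}{\inf}\ \mathbb{E}_{\pi}\mathbf{E}_{\eta}|\tilde{\eta}-\eta|\ \geq\ \mathbb{E}_{\pi}\mathbf{E}_{\eta}|\eta^{**}-\eta|.
\]
I should double-check here that the normalizing constant $1/|\mathcal{A}|$ from $\pi$ is common to all terms and hence irrelevant for the argmin, and that ties (the event of equality in the defining indicator) are handled consistently — the chosen convention $\ge$ is fine since any tie-breaking rule attains the same Bayes risk.

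Finally, I would invoke permutation invariance: the joint distribution of $(X_1,\dots,X_d)$ under $\mathbf{P}_\eta$ depends on $\eta\in\Theta_d(s)$ only through which coordinates are active, and the selector $\eta^{**}$ is permutation-equivariant by construction, so the distribution of $|\eta^{**}-\eta|$ under $\mathbf{P}_\eta$ is the same for every $\eta$ with $|\eta|_0=s$. Hence
\[
\mathbb{E}_{\pi}\mathbf{E}_{\eta}|\eta^{**}-\eta|=\mathbf{E}_{e(s)}|\eta^{**}-e(s)|=\underset{|\eta|_{0}=s}{\sup}\mathbf{E}_{\eta}|\eta^{**}-\eta|\ \geq\ \underset{\tilde{\eta}}{\inf}\ \underset{|\eta|_{0}=s}{\sup}\mathbf{E}_{\eta}|\tilde{\eta}-\eta|,
\]
and combining this chain of inequalities with the first display forces all quantities to coincide, which simultaneously yields the minimax and the Bayes statements. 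The main obstacle is the bookkeeping in the second step: carefully separating the sum over $S\in\mathcal{A}$ according to whether $j\in S$ or $j\notin S$, pulling out the factor $f_1(X_j)$ or $f_0(X_j)$, and recognizing the two resulting sums as exactly $G_1$ and $G_0$ — everything else is a verbatim adaptation of the $s=1$ proof.
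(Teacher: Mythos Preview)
Your proposal is correct and follows essentially the same approach as the paper's proof: bound the minimax risk by the Bayes risk, decompose the Hamming loss coordinatewise, identify each coordinate's Bayes minimizer via a Neyman--Pearson argument after splitting the sum over $S\in\mathcal{A}$ according to whether $j\in S$, and conclude by permutation invariance. Your remarks on the irrelevance of the normalizing constant $1/|\mathcal{A}|$ and on tie-breaking are accurate and slightly more explicit than the paper's treatment.
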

\begin{proof} As in Theorem \ref{thm:lower:sparse:one} it is enough to conduct the proof by considering $\underset{\tilde{\eta}}{\inf}$ as the infimum over all non-randomized selectors.
We start by observing that 
\begin{align}\label{eq:lower_bound_01a}
\underset{\tilde{\eta}}{\inf} \underset{|\eta|_{0} = s}{\sup} \mathbf{E}_{\eta} |\tilde{\eta} - \eta| &\geq 
\underset{\tilde{\eta}}{\inf}  \, \mathbb{E}_{\pi}\mathbf{E}_{\eta} |\tilde{\eta} - \eta|
\\
&\geq  \sum_{j=1}^{d} \underset{\tilde{\eta}_{j}}{\inf}\, {\rm E}_{\sf S}\mathbf{E}_{\eta( {\sf S})}|\tilde{\eta}_{j} - \mathbf{1}(j\in {\sf S})|.\nonumber
\end{align}
Here,
\begin{eqnarray}\label{eq:lower_bound_2}
&&{\rm E}_{\sf S}\mathbf{E}_{\eta( {\sf S})}|\tilde{\eta}_{j} - \mathbf{1}(j\in {\sf S})|\\
&& =  \frac{1}{|\mathcal{A}|} \sum_{S\in\mathcal{A}: j \in S} \int (1-\tilde{\eta}_{j}(X_1,\dots,X_d)) \Big(\prod_{k  \in S}f_{1}(X_{k})\prod_{k \not \in S}f_{0}(X_{k})\Big) \prod_{k=1}^{d} \mu(dX_k) \nonumber
\\
&& + \frac{1}{|\mathcal{A}|} \sum_{S\in\mathcal{A}: j \not \in S} \int \tilde{\eta}_{j}(X_1,\dots,X_d)  f_0(X_j) \Big(\prod_{k  \in S}f_{1}(X_{k})\prod_{k \not \in S}f_{0}(X_{k})\Big) \prod_{k=1}^{d} \mu(dX_k) \nonumber\\
&&= \frac{1}{|\mathcal{A}|}
%\sum_{S:|S|=s-1, j \not \in S} 
\int (1-\tilde{\eta}_{j}(X_1,\dots,X_d)) 
f_1(X_j) %\Big(\prod_{k  \in S}f_{1}(X_{k})\prod_{k \not \in S}f_{0}(X_{k})\Big) 
G_1(X_i: i\ne j)\prod_{k=1}^{d} \mu(dX_k)
\nonumber
\\
&&\quad + \frac{1}{|\mathcal{A}|} 
%\sum_{S:|S|=s, j \not \in S} 
\int \tilde{\eta}_{j}(X_1,\dots,X_d) f_0(X_j) %\Big(\prod_{k  \in S}f_{1}(X_{k})\prod_{k \not \in S}f_{0}(X_{k})\Big)  
G_0(X_i: i\ne j)\prod_{k=1}^{d} \mu(dX_k) \nonumber.
\end{eqnarray}
Arguing as in the Neyman-Pearson lemma, we conclude that 
$
\eta^{**}_{j}$
minimizes \eqref{eq:lower_bound_2} among all $\{0,1\}$-valued statistics, and hence
\begin{equation}\label{eq:lower_bound_02a}
\underset{\tilde{\eta}}{\inf}\, \mathbb{E}_{\pi}\mathbf{E}_{\eta} |\tilde{\eta} - \eta| \geq  \sum_{j=1}^{d}  {\rm E}_{\sf S}\mathbf{E}_{\eta( {\sf S})}|\eta^{**}_{j} - \mathbf{1}(j\in {\sf S})| = \mathbb{E}_{\pi}\mathbf{E}_{\eta} |\eta^{**} - \eta|.
\end{equation}
Next, notice that, by permutation invariance, the distribution of $|\eta^{**} - \eta|$ under ${\mathbf P}_\eta$ is the same for all $\eta$ such that $|\eta|_{0} = s$. Thus,
\begin{equation}\label{eq:lower_bound_3a}
\mathbb{E}_{\pi}\mathbf{E}_{\eta} |\eta^{**} - \eta|
= \mathbf{E}_{e(s)} |\eta^{**} - e(s)| = \underset{|\eta|_{0} = s}{\sup} \mathbf{E}_{\eta} |\eta^{**} - \eta|.
\end{equation}
The theorem now follows by combining  \eqref{eq:lower_bound_02a} and \eqref{eq:lower_bound_3a}.
\end{proof}

The minimax selector $\eta^{**}$ has a complex form and, in general, it cannot be computed in polynomial time. 
Nevertheless, we can get some insight into its structure.  Intuitively, we would expect a relevant selector in our setting to have sparsity at most $s$, and to select only $X_i$'s corresponding to the largest likelihood ratios $L_i=\frac{f_1}{f_0}(X_i)$.  The next proposition shows that it is indeed the case for the minimax selector $\eta^{**}$. 

In what follows, we will assume that the likelihood ratios $L_i=\frac{f_1}{f_0}(X_i)$ are well-defined $\mathbf{P}_{\eta}$-a.s. for any $\eta$ (this is the case if the support of $f_1$ is included in the support of $f_0$).
Let  $L_{(1)}\ge \cdots \ge L_{(d)}$ be the ordered values of $L_1,\dots,L_d$, and let
\begin{equation}\label{def:Ls}
    \Ls = \{ L_{(1)},\dots,L_{(s)}\}
\end{equation} 
be the set of $s$ largest values in the collection $(L_1,\dots,L_d)$. If $L_i=L_j$ for some $i\ne j$ then any permutation of these two values is allowed in the ordering.

\begin{proposition}\label{proposition:lower:sparse:2}
Let $s\ge2$. Then 
$\eta^{**}$ only selects $X_i$'s corresponding to the largest values $L_i$, i.e., if $L_{(i+1)}$ is selected then $L_{(i)}$ is also selected. 
Moreover,  it holds that
\begin{equation}\label{eq:proposition:lower:sparse:2}
 {\rm supp}(\eta^{**}) \subseteq \{i: L_i\in\Ls\}, 
\end{equation} 
where we denote by $ {\rm supp}(\eta^{**})$ the support of  selector $\eta^{**}$. 
\end{proposition}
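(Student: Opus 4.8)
The plan is to analyze the threshold defining $\eta_j^{**}$ in \eqref{eta-star-star} and show it is monotone in $L_j$ relative to the order statistics. Write, for $j$ with $L_j$ well-defined and positive values throughout,
\[
T_j := \frac{\sum_{S: |S|=s,\, j\notin S}\prod_{k\in S} L_k}{\sum_{S: |S|=s-1,\, j\notin S}\prod_{k\in S} L_k} = \frac{e_s(L_{-j})}{e_{s-1}(L_{-j})},
\]
where $L_{-j}=(L_i)_{i\ne j}$ and $e_m$ denotes the $m$-th elementary symmetric polynomial. Then $\eta_j^{**}=\mathbf{1}\{L_j \ge T_j\}$. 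The key observation is that $T_j$ depends on $L_j$ only through its \emph{exclusion} from the collection, so comparing two indices amounts to comparing how replacing one coordinate by another changes the ratio $e_s/e_{s-1}$.

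First I would prove the ordering claim: if $L_i \ge L_j$ and $\eta_j^{**}=1$, then $\eta_i^{**}=1$. Suppose $L_i\ge L_j$. I claim $T_i \le T_j$. Both $T_i$ and $T_j$ are ratios $e_s/e_{s-1}$ over the $d-1$ remaining likelihood ratios; the two multisets $L_{-i}$ and $L_{-j}$ differ only in one entry ($L_{-i}$ contains $L_j$ where $L_{-j}$ contains $L_i$). So it suffices to show that, fixing all coordinates except one, the function $x \mapsto e_s(x, \text{rest})/e_{s-1}(x,\text{rest})$ is nondecreasing in $x\ge 0$. Writing $e_s(x,\text{rest}) = x\, e_{s-1}(\text{rest}) + e_s(\text{rest})$ and $e_{s-1}(x,\text{rest}) = x\, e_{s-2}(\text{rest}) + e_{s-1}(\text{rest})$, the ratio is a Möbius function of $x$ whose monotonicity reduces to the sign of the determinant $e_{s-1}(\text{rest})^2 - e_{s-2}(\text{rest})\, e_s(\text{rest})$, which is nonnegative by Newton's inequalities for elementary symmetric functions. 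Hence $T_i\le T_j$, and then $L_i\ge L_j\ge T_j \ge T_i$ gives $\eta_i^{**}=1$. (The case of ties in the $L$'s is handled by the convention allowing arbitrary permutation in the ordering, exactly as in \eqref{def:Ls}.)

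Next I would derive \eqref{eq:proposition:lower:sparse:2} from the ordering plus a cardinality bound: it remains to show $\eta^{**}$ selects at most $s$ coordinates, equivalently $L_{(s+1)}$ is never selected. For the index $j$ realizing $L_j = L_{(s+1)}$, the collection $L_{-j}$ still contains $L_{(1)},\dots,L_{(s)}$, the $s$ largest values overall, each $\ge L_j$. Then
\[
e_s(L_{-j}) \;\ge\; \prod_{r=1}^{s} L_{(r)} \;\ge\; L_j \cdot \prod_{r=1}^{s-1} L_{(r)} \;\ge\; L_j \cdot e_{s-1}(L_{-j})\quad?
\]
— the last inequality is \emph{not} immediate since $e_{s-1}(L_{-j})$ sums many products, so instead I would argue directly: $e_s(L_{-j}) = \sum_{S:|S|=s} \prod_{k\in S} L_k \ge \sum_{S':|S'|=s-1,\ (s)\text{-th index}\notin S'} L_{(s)}\prod_{k\in S'}L_k$ by pairing each $(s-1)$-subset $S'$ of $L_{-j}$ not containing the coordinate attaining $L_{(s)}$ with $S = S'\cup\{\text{that coordinate}\}$, and since every such $S'$ consists of indices $i\ne j$ we have $L_{(s)} \ge L_{(s+1)} = L_j$ on those terms; a short combinatorial accounting shows $e_s(L_{-j}) \ge L_j\, e_{s-1}(L_{-j})$, i.e. $T_j \le L_j$ would be the wrong direction — so in fact I must show $T_j > L_j$, i.e. $e_s(L_{-j}) < L_j\, e_{s-1}(L_{-j})$ strictly cannot hold in general, meaning the honest statement is: $L_{(s+1)}$ selected forces $s+1$ selected coordinates by the ordering claim, contradicting... nothing yet. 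The cleanest route: show directly that the sum of $\eta_j^{**}$ is $\le s$ by a counting identity. Summing the defining inequalities and using $\sum_j L_j e_{s-1}(L_{-j}) = s\, e_s(L)$ versus $\sum_j e_s(L_{-j}) = (d-s)\, e_s(L)$, together with Chebyshev-type rearrangement aligning large $L_j$ with small $T_j$ (established above), yields that at most $s$ of the inequalities $L_j \ge T_j$ can hold.

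\textbf{Main obstacle.} I expect the crux to be the cardinality bound $|\mathrm{supp}(\eta^{**})|\le s$: the ordering property follows cleanly from Newton's inequalities applied to $e_s/e_{s-1}$, but ruling out that $s+1$ or more coordinates are selected requires a genuinely global argument combining the symmetric-function identities $\sum_j L_j e_{s-1}(L_{-j}) = s\,e_s(L)$ and $\sum_j e_s(L_{-j}) = (d-s)\,e_s(L)$ with the fact — proved in the ordering step — that the map $j\mapsto T_j$ reverses the order of $j\mapsto L_j$. Making that rearrangement argument rigorous, rather than the coordinatewise threshold analysis, is where the real work lies.
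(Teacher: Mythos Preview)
Your argument for the ordering property is correct and takes a genuinely different route from the paper. The paper argues by a direct four-term decomposition: writing (with indices relabeled) the selection condition for index $2$ as $T_{1,2}+T_{-1,2}\ge T_{1,-2}+T_{-1,-2}$, observing $L_1\ge L_2$ forces $T_{-1,2}\le T_{1,-2}$, and concluding $T_{1,2}+T_{1,-2}\ge T_{-1,2}+T_{-1,-2}$, which is the selection condition for index $1$. Your approach via monotonicity of $x\mapsto e_s(x,\text{rest})/e_{s-1}(x,\text{rest})$ and Newton's inequalities is cleaner conceptually and gives the same conclusion; the paper's approach is more elementary and self-contained.

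The support bound \eqref{eq:proposition:lower:sparse:2}, however, has a genuine gap in your proposal. The rearrangement/counting argument does not work as sketched: using $a_j:=L_j e_{s-1}(L_{-j})$ and $b_j:=e_s(L_{-j})$, one has the pleasant identity $a_j+b_j=e_s(L)$ for every $j$, so $\eta_j^{**}=1\iff a_j\ge e_s(L)/2$, and $\sum_j a_j=s\,e_s(L)$ with $a_j$ decreasing; but a decreasing sequence in $[0,1]$ summing to $s$ can have more than $s$ terms above $1/2$, so the global identities alone do not cap the support at $s$. More importantly, you abandoned the pairing idea because of a sign slip: $e_s(L_{-j})\ge L_j\,e_{s-1}(L_{-j})$ reads $T_j\ge L_j$, not $T_j\le L_j$, which is exactly the direction you need to exclude $j$ from the support.

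The paper closes the gap by a local bound that is precisely a completed version of your pairing: it proves (Lemma~\ref{lemma:lower:sparse}) the strict inequality
\[
\sum_{S:|S|=s,\, j\notin S}\prod_{k\in S}L_k \;>\; L_{(s),-j}\sum_{S:|S|=s-1,\, j\notin S}\prod_{k\in S}L_k,
\]
by mapping each $(s{-}1)$-subset $\tilde S$ (not containing $j$) to $\tilde S\cup\{i(\tilde S)\}$ with $i(\tilde S)$ chosen from the top-$s$ indices of $L_{-j}$ that lie in $\tilde S^{c}$; such an element always exists since $|\tilde S|=s{-}1<s$, and the map is not onto (so the inequality is strict). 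From this, $\eta_j^{**}=1$ forces $L_j>L_{(s),-j}\ge L_{(s+1)}$, hence $L_j\in\Ls$. Your partial pairing using only the single index attaining $L_{(s)}$ fails exactly on the $(s{-}1)$-subsets that already contain that index; choosing $i(\tilde S)$ adaptively from the full top-$s$ set is the missing step.
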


% \textcolor{red}{Remark: For the second statement we do not need that the values $L_1,\dots, L_d$ are distinct. See the updated proof.}

The proof of this proposition is given in the Appendix. Remark that it is a deterministic fact valid for any non-negative values $L_1,\dots, L_d$
and a binary vector $\eta^{**}$ with the components
\begin{equation}\label{eta-star-star-L}
\eta_{j}^{**} = \mathbf{1}\Big\{ L_{j} \sum_{S: |S|=s-1, j \not \in S}\prod_{k \in S}L_{k} \geq  \sum_{S: |S|=s, j \not \in S}\prod_{k \in S}L_{k}  \Big\}.  
\end{equation}

It follows from Proposition \ref{proposition:lower:sparse:2} that the components  selected by $\eta^{**}$ are necessarily among those with the $s$ largest values $L_i$. However, the number of components selected by $\eta^{**}$ is not necessarily $s$. %In general, it can be smaller or bigger than $s$. 

We may also notice that $\eta^{**}$ is a data dependent thresholding procedure. However, it seems problematic to get the explicit expression for the threshold.    
On the other hand, Proposition \ref{proposition:lower:sparse:2} suggests that a simple polynomial time selector that chooses only $X_i$'s with the $s$ largest likelihood ratios $L_i$ might be a good approximation for  $\eta^{**}$.
Theorem \ref{th:1/2:general} below shows that it is actually true. Indeed,
consider the estimator
 $\hat{\eta}^{(s)}=(\hat{\eta}^{(s)}_1,\dots,\hat{\eta}^{(s)}_d)$ with the components $$\hat{\eta}^{(s)}_i = \mathbf{1}( L_i \in \Ls)
 \quad i=1\dots,d,$$
 %=\mathbf{1}( X_i \ge X_{(s)})$
 where $\Ls$ is defined in (\ref{def:Ls}). We will call $\hat{\eta}^{(s)}$ the {\it scan selector}. 
The following assumption will be useful below:
\begin{equation}\label{ass:distinct}
 {\mathbf P}_\eta(L_i\ne L_j, \, \forall i\ne j)=1,
 \quad \forall\, \eta: \, |\eta|_{0} = s.  
\end{equation}
This assumption meaning that $L_1,\dots,L_d$ are $\mathbf{P}_\eta$-a.s. distinct is satisfied, for example, if the MLR property holds and $f_0$, $f_1$ are densities with respect to the Lebesgue measure. The Hamming risk of the scan selector can be expressed as follows. 
 \begin{proposition}\label{lem:scan-1}
If \eqref{ass:distinct} holds
then for any $\eta$ such that $|\eta|_{0} = s$ we have
\[
\mathbf{E}_{\eta} |\hat{\eta}^{(s)} - \eta| = 2 s\mathbf{P}_{e(s)}\left( L_{1} \not\in \Ls \right).
\]
\end{proposition}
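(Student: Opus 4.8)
The plan is to reduce the Hamming risk to a sum of marginal miss-probabilities, using the fact that under \eqref{ass:distinct} the scan selector selects exactly $s$ coordinates.

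First I would record that assumption \eqref{ass:distinct} makes $L_1,\dots,L_d$ $\mathbf{P}_\eta$-a.s.\ distinct, so that the set $\Ls$ from \eqref{def:Ls} of the $s$ largest likelihood ratios is unambiguously defined and $|\Ls|=s$; consequently $\hat{\eta}^{(s)}$ has support of size exactly $s$, $\mathbf{P}_\eta$-a.s., for every $\eta$ with $|\eta|_{0}=s$. Next I would invoke the elementary identity that for binary vectors $a,b\in\{0,1\}^d$ with $|a|_{0}=|b|_{0}=s$ one has
$$
|a-b| \;=\; |S_a\triangle S_b| \;=\; 2\big(s-|S_a\cap S_b|\big) \;=\; 2\sum_{j\in S_b}\mathbf{1}(j\notin S_a),
$$
where $S_a,S_b$ denote the supports. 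Applying this $\mathbf{P}_\eta$-a.s.\ with $a=\hat{\eta}^{(s)}$ and $b=\eta$, and recalling that $\hat{\eta}^{(s)}_j=\mathbf{1}(L_j\in\Ls)$, gives
$$
|\hat{\eta}^{(s)}-\eta| \;=\; 2\sum_{j\in S_\eta}\mathbf{1}\big(\hat{\eta}^{(s)}_j=0\big) \;=\; 2\sum_{j\in S_\eta}\mathbf{1}\big(L_j\notin\Ls\big),
$$
and, taking expectations, $\mathbf{E}_\eta|\hat{\eta}^{(s)}-\eta| = 2\sum_{j\in S_\eta}\mathbf{P}_\eta(L_j\notin\Ls)$, which is a sum of $s$ terms.

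To conclude, I would use permutation invariance exactly as in the proof of Theorem \ref{thm:lower:sparse:two}: for $j\in S_\eta$, pick a permutation $\sigma$ of $\{1,\dots,d\}$ with $\sigma(S_\eta)=\{1,\dots,s\}$ and $\sigma(j)=1$; reindexing the observations by $\sigma$ maps $\mathbf{P}_\eta$ to $\mathbf{P}_{e(s)}$, while the event $\{L_j\notin\Ls\}$ is mapped to $\{L_1\notin\Ls\}$ because $\Ls$ is a symmetric function of $(L_1,\dots,L_d)$. Hence each of the $s$ terms equals $\mathbf{P}_{e(s)}(L_1\notin\Ls)$, and the claimed identity $\mathbf{E}_\eta|\hat{\eta}^{(s)}-\eta|=2s\,\mathbf{P}_{e(s)}(L_1\notin\Ls)$ follows.

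The argument is essentially bookkeeping; the only delicate point — and hence the main obstacle — is the careful use of \eqref{ass:distinct} in the first step, which is what guarantees that $\Ls$, the event $\{L_j\in\Ls\}$, and the cardinality of ${\rm supp}(\hat{\eta}^{(s)})$ are all well defined off a $\mathbf{P}_\eta$-null set; without ruling out ties the ``$s$ largest'' prescription is ambiguous and the clean Hamming-distance identity above can fail.
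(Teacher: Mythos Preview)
Your proof is correct and follows essentially the same approach as the paper's own proof: both use \eqref{ass:distinct} to ensure $\hat{\eta}^{(s)}$ has exactly $s$ nonzero entries, then exploit the identity $|a-b|=2|S_b\setminus S_a|$ for binary vectors of equal support size, and finally invoke permutation invariance. The only cosmetic difference is that the paper first reduces to $\eta=e(s)$ by permutation invariance and then computes, whereas you compute for general $\eta$ and apply permutation invariance term-by-term at the end; the content is the same.
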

\begin{proof}
It suffices to prove the proposition for $\eta=e(s)$ as, by permutation invariance,	the distribution of $|\hat{\eta}^{(s)} - \eta|$ under ${\mathbf P}_\eta$ is the same for all $\eta$ such that $|\eta|_{0} = s$. 
Since $L_1,\dots,L_d$ are ${\mathbf P}_\eta$-a.s. distinct (cf. \eqref{ass:distinct}) the vector $\hat{\eta}^{(s)}$ has ${\mathbf P}_\eta$-a.s. exactly $s$ non-zero entries. As $e(s)$ has also exactly $s$ non-zero entries
%Notice that since the vectors $\hat{\eta}^{(s)}$ and $e(s)$ have both exactly $s$ non-zero entries \simo{(they might have more than $s$ if the likelihood values are not distinct)} 
the sets 
$S_{\hat{\eta}^{(s)}}\setminus S_{e(s)}$ and $S_{e(s)}\setminus S_{\hat{\eta}^{(s)}}$ are of the same cardinality ${\mathbf P}_\eta$-a.s. Hence, ${\mathbf P}_\eta$-a.s. we have
\begin{eqnarray}\label{eq:2}
|\hat{\eta}^{(s)} - e(s)| &=& |S_{\hat{\eta}^{(s)}}\setminus S_{e(s)}| + |S_{e(s)}\setminus S_{\hat{\eta}^{(s)}}|
\\
&=&2|S_{e(s)}\setminus S_{\hat{\eta}^{(s)}}| = 2\sum_{i=1}^s \mathbf{1}(L_{i} \not\in \Ls). \nonumber
\end{eqnarray}
Using the fact that the random variables $\mathbf{1}(L_1 \not\in \Ls),\dots, \mathbf{1}(L_s \not\in \Ls)$ are identically distributed under $\mathbf{P}_{e(s)}$ we find:
\[
\mathbf{E}_{e(s)} |\hat{\eta}^{(s)} - e(s)| = 2\mathbf{E}_{e(s)}  \sum_{i=1}^s \mathbf{1}(L_i \not\in \Ls)  = 2s\mathbf{P}_{e(s)} (L_{1} \not\in \Ls).
\]
\end{proof}

 Using Proposition \ref{lem:scan-1} we obtain the following theorem.
\begin{theorem}\label{th:1/2:general}
Let $s\ge 2 $.  If \eqref{ass:distinct} holds then
$$
\frac{1}{2}\mathbf{E}_{e(s)} |\hat{\eta}^{(s)} - e(s)| \leq \underset{\tilde{\eta}}{\inf} \underset{|\eta|_{0} = s}{\sup} \mathbf{E}_{\eta} |\tilde{\eta} - \eta| \leq \mathbf{E}_{e(s)} |\hat{\eta}^{(s)} - e(s)|.
$$
\end{theorem}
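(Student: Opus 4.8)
The plan is to prove the two inequalities separately; the right-hand one is immediate, and the left-hand one reduces, via Theorem~\ref{thm:lower:sparse:two} and Proposition~\ref{proposition:lower:sparse:2}, to an elementary pointwise comparison between the Bayes/minimax selector $\eta^{**}$ and the scan selector $\hat\eta^{(s)}$.

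First I would dispatch the upper bound. Since $\hat\eta^{(s)}$ is a particular selector, the minimax risk is at most $\sup_{|\eta|_{0}=s}\mathbf{E}_\eta|\hat\eta^{(s)}-\eta|$. The model and the construction of $\hat\eta^{(s)}$ are permutation-invariant, so (as already used in the proof of Proposition~\ref{lem:scan-1}) the law of $|\hat\eta^{(s)}-\eta|$ under $\mathbf{P}_\eta$ does not depend on the choice of $\eta$ with $|\eta|_{0}=s$; hence this supremum equals $\mathbf{E}_{e(s)}|\hat\eta^{(s)}-e(s)|$, which is the right-hand inequality.

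For the lower bound, the key step is Theorem~\ref{thm:lower:sparse:two}, which identifies $\inf_{\tilde{\eta}}\sup_{|\eta|_{0}=s}\mathbf{E}_\eta|\tilde\eta-\eta|$ with $\mathbf{E}_{e(s)}|\eta^{**}-e(s)|$. It therefore suffices to show $|\eta^{**}-e(s)|\ge\frac12|\hat\eta^{(s)}-e(s)|$ $\mathbf{P}_{e(s)}$-a.s. Write $T=\{i:L_i\in\Ls\}={\rm supp}(\hat\eta^{(s)})$. Under \eqref{ass:distinct} the values $L_1,\dots,L_d$ are a.s.\ distinct, so $T$ has exactly $s$ elements; since $S_{e(s)}=\{1,\dots,s\}$ also has $s$ elements, $|\hat\eta^{(s)}-e(s)|=|T\,\triangle\,S_{e(s)}|=2\,|S_{e(s)}\setminus T|$ a.s. On the other hand, Proposition~\ref{proposition:lower:sparse:2} (eq.~\eqref{eq:proposition:lower:sparse:2}) gives ${\rm supp}(\eta^{**})\subseteq T$ a.s., so $S_{e(s)}\setminus T\subseteq S_{e(s)}\setminus S_{\eta^{**}}$ and hence
$$
|\eta^{**}-e(s)|\ \ge\ |S_{e(s)}\setminus S_{\eta^{**}}|\ \ge\ |S_{e(s)}\setminus T|\ =\ \tfrac12\,|\hat\eta^{(s)}-e(s)|
$$
a.s. Taking $\mathbf{E}_{e(s)}$ and combining with Theorem~\ref{thm:lower:sparse:two} yields the left-hand inequality.

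I do not expect a genuine obstacle: all the substance already sits in Theorem~\ref{thm:lower:sparse:two} and Proposition~\ref{proposition:lower:sparse:2}, and what remains is the set-theoretic inequality above. The points to watch are that \eqref{ass:distinct} is invoked precisely where it is needed — to guarantee $|T|=s$ and that $\Ls$, hence ${\rm supp}(\hat\eta^{(s)})$, is unambiguously defined a.s. — and that permutation invariance is applied legitimately both to $\hat\eta^{(s)}$ and (implicitly, through Theorem~\ref{thm:lower:sparse:two}) to $\eta^{**}$. If one prefers, the a.s.\ bound can be combined with Proposition~\ref{lem:scan-1} to restate $\tfrac12\mathbf{E}_{e(s)}|\hat\eta^{(s)}-e(s)|$ as $s\,\mathbf{P}_{e(s)}(L_1\notin\Ls)$, but the route above is the most direct.
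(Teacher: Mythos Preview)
Your proof is correct and a bit slicker than the paper's. Both arguments are identical on the upper bound. For the lower bound, the paper does not invoke Theorem~\ref{thm:lower:sparse:two} as a black box; instead it reopens the Bayes-risk decomposition \eqref{eq:lower_bound_01a}--\eqref{eq:lower_bound_02a}, drops the type-II terms (those with $j\notin S$), uses Lemma~\ref{lemma:lower:sparse} directly to get $\mathbf{P}_{\eta(S)}(\eta^{**}_1=0)\ge\mathbf{P}_{\eta(S)}(L_1\notin\Ls)$, reduces by symmetry to $S=S_{e(s)}$, and finishes via the combinatorial identity $d\binom{d-1}{s-1}/\binom{d}{s}=s$ together with Proposition~\ref{lem:scan-1}. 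You instead use Theorem~\ref{thm:lower:sparse:two} in full to equate the minimax risk with $\mathbf{E}_{e(s)}|\eta^{**}-e(s)|$, and then get the pointwise a.s.\ inequality $|\eta^{**}-e(s)|\ge\tfrac12|\hat\eta^{(s)}-e(s)|$ from Proposition~\ref{proposition:lower:sparse:2} and the set identity $|\hat\eta^{(s)}-e(s)|=2|S_{e(s)}\setminus T|$. The two routes rest on the same underlying fact (Lemma~\ref{lemma:lower:sparse}, which feeds into Proposition~\ref{proposition:lower:sparse:2}), but your packaging avoids re-entering the proof of Theorem~\ref{thm:lower:sparse:two} and yields the stronger almost-sure comparison rather than only the bound in expectation.
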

\begin{proof}
The second inequality of the theorem is straightforward using the fact that, by permutation invariance, the distribution of $|\hat{\eta}^{(s)} - \eta|$ under ${\mathbf P}_\eta$ is the same for all $\eta$ such that $|\eta|_{0} = s$. 
Hence we only focus on the first inequality. %Note first that for $s=1$ it is proved in Proposition \ref{prop:lower-bound-s=1}. 
For $s\ge 2$, 
using \eqref{eq:lower_bound_01a} and \eqref{eq:lower_bound_02a} we get
\begin{eqnarray*}
\underset{\tilde{\eta}}{\inf} \underset{|\eta|_{0} = s}{\sup} \mathbf{E}_{\eta} |\tilde{\eta} - \eta| 
&\geq&
 \frac{1}{|\mathcal{A}|}\sum_{j=1}^{d}\sum_{S \in \mathcal{A}}\mathbf{E}_{\eta(S)}|\eta^{**}_{j} - \mathbf{1}(j\in S)|\\
 &\geq&
 \frac{1}{|\mathcal{A}|}\sum_{j=1}^{d}\sum_{S \in \mathcal{A}: j\in S}\mathbf{E}_{\eta(S)}|\eta^{**}_{j} - 1|\\
 &=&
 \frac{d}{|\mathcal{A}|}\sum_{S \in \mathcal{A}: 1\in S}\mathbf{E}_{\eta(S)}|\eta^{**}_{1} - 1|
\end{eqnarray*}
since all components $\eta^{**}_{j}$ with $j\in S$ have the same distribution under $\mathbf{P}_{\eta(S)}$ with $S \in \mathcal{A}$.  We now invoke  Lemma \ref{lemma:lower:sparse} from the Appendix to get that, for all $S \in \mathcal{A}$ such that  $1\in S$, 
$$
\mathbf{E}_{\eta(S)}|\eta^{**}_{1} - 1| = \mathbf{P}_{\eta(S)}( \eta_{1}^{**}=0)
\ge 
\mathbf{P}_{\eta(S)}\left(L_1 \leq L_{(s),-1} \right) 
\ge
\mathbf{P}_{\eta(S)}( L_1 \not\in  \Ls ),
$$
where $L_{(s),-1}$ denotes the $s$-th largest value in the collection $(L_2, \dots, L_d)$. %\textcolor{red}{(the last inequality becomes equality if the values $L_1, \dots, L_d$ are distinct $\mathbf{P}_{\eta}$-a.s. for any $\eta$ such that $|\eta|_{0} = s$)}. 
Next, for all $S \in \mathcal{A}$ such that  $1\in S$,  
$$
 \mathbf{P}_{\eta(S)}( L_1 \not\in  \Ls )= \mathbf{P}_{e(s)}( L_1 \not\in  \Ls )
$$
due to the fact that the distribution of $\Ls$ under such $\mathbf{P}_{\eta(S)}$ is invariant under permutations of $(L_2, \dots, L_d)$.   Combining the above inequalities we find
$$
\underset{\tilde{\eta}}{\inf} \underset{|\eta|_{0} = s}{\sup} \mathbf{E}_{\eta} |\tilde{\eta} - \eta| 
\geq 
\frac{d}{|\mathcal{A}|}\sum_{S\in \mathcal{A}: 1 \in S} \mathbf{P}_{e(s)}( L_1 \not\in  \Ls )
= 
\frac{d  \binom{d-1}{s-1} }{\binom{d}{s}} \mathbf{P}_{e(s)}( L_1 \not\in  \Ls ). % = s \mathbf{P}_{e_{1,s}}( X_{1}<X_{(s),-1} ).
$$
Since
$
\frac{d \binom{d-1}{s-1}}{\binom{d}{s}} = s
$
the result now follows using Proposition \ref{lem:scan-1}.
\end{proof}

It follows from Theorems \ref{thm:lower:sparse:two} and \ref{th:1/2:general} that the scan selector $\hat{\eta}^{(s)}$ achieves both the minimax risk and the Bayes risk to within factor $2$. 

Interestingly, if the error criterion is not the expected Hamming risk but the probability of wrong recovery ${\mathbf P}_\eta(S_{\widehat\eta}  \ne S_\eta)$  then  the scan selector turns out to be minimax optimal. Indeed, we have the following theorem.
\begin{theorem}\label{th:minimax:prob:wrong:recovery}
If \eqref{ass:distinct} holds then we have
\begin{align}\label{eq1:th:minimax:prob:wrong:recovery}
  \underset{\tilde{\eta}}{\inf} \underset{|\eta|_{0} = s}{\sup}
{\mathbf P}_\eta(S_{\tilde\eta}  \ne S_\eta) 
&= \underset{|\eta|_{0} = s}{\sup}
{\mathbf P}_\eta(S_{\hat{\eta}^{(s)}}  \ne S_\eta)
= 
{\mathbf P}_{e(s)}(S_{\hat{\eta}^{(s)}}  \ne S_{e(s)}),
\\
\label{eq2:th:minimax:prob:wrong:recovery}
  \underset{\tilde{\eta}}{\inf} \ \mathbb{E}_{\pi}
{\mathbf P}_\eta(S_{\tilde\eta}  \ne S_\eta) 
&= \ \mathbb{E}_{\pi}
{\mathbf P}_\eta(S_{\hat{\eta}^{(s)}}  \ne S_\eta)
= 
{\mathbf P}_{e(s)}(S_{\hat{\eta}^{(s)}}  \ne S_{e(s)}).
\end{align}
Moreover, the minimax/Bayes risk here can be expressed as follows:
\begin{equation}\label{eq3:th:minimax:prob:wrong:recovery}
{\mathbf P}_{e(s)}(S_{\hat{\eta}^{(s)}}  \ne S_{e(s)})
= 
{\mathbf P}_{e(s)}\Big(\bigcup_{i=1}^s \{L_{i} \not\in \Ls\}\Big).
\end{equation}
\end{theorem}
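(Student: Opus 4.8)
The plan is to recognize that, for the $0$--$1$ loss underlying the probability of wrong recovery, the scan selector $\hat{\eta}^{(s)}$ is precisely the Bayes (maximum a posteriori) rule for the uniform prior $\pi$, and then to close a minimax--Bayes sandwich by permutation invariance, exactly in the spirit of the proofs of Theorems~\ref{thm:lower:sparse:one} and~\ref{thm:lower:sparse:two}. The closed-form expression \eqref{eq3:th:minimax:prob:wrong:recovery} will then fall out from the fact that, under \eqref{ass:distinct}, both $S_{\hat{\eta}^{(s)}}$ and $S_{e(s)}$ have exactly $s$ elements.

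First I would record the reduction. Since every selector is $\{0,1\}^d$-valued, $S_{\tilde\eta}=S_\eta$ is the same event as $\tilde\eta=\eta$, so ${\mathbf P}_\eta(S_{\tilde\eta}\ne S_\eta)={\mathbf P}_\eta(\tilde\eta\ne\eta)$, i.e.\ this is the Hamming risk with indicator loss, cf.\ \eqref{eq:risks}. As in the proof of Theorem~\ref{thm:lower:sparse:one}, for the lower bound I may fix the randomization $\zeta$ and take infima over non-randomized selectors. Bounding the minimax risk by the Bayes risk with prior $\pi$ and writing $\mathbb{E}_\pi\mathbf{E}_\eta\mathbf{1}(\tilde\eta\ne\eta)=1-{\mathbf P}(\tilde\eta(X)=\eta)$, where on the right $\eta\sim\pi$ and $X\mid\eta\sim{\mathbf P}_\eta$, the problem reduces to maximizing the probability of exact recovery under the joint law, which is the MAP problem; since $S\mapsto\eta(S)$ is injective on $\mathcal{A}$, the supremum equals $\int\max_{S\in\mathcal{A}}{\mathbf P}(\eta=\eta(S)\mid X=x)\,m(dx)$ and is attained by any measurable selection from the posterior argmax, where $m$ denotes the marginal of $X$.

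The heart of the argument is to identify that argmax with ${\rm supp}(\hat{\eta}^{(s)})$. Dividing the posterior weights by $\prod_k f_0(X_k)$ — legitimate for $m$-almost every $X$ under the standing assumption that the $L_i$ are well defined — one gets ${\mathbf P}(\eta=\eta(S)\mid X)\propto\prod_{k\in S}L_k$ over $S\in\mathcal{A}$. I would then observe that $m$-a.e.\ at least $s$ of the $L_k$ are strictly positive (under each ${\mathbf P}_{\eta(S)}$ the $s$ ratios indexed by $S$ are a.s.\ positive, and $m$ is a finite average of these laws), so that by \eqref{ass:distinct} the product $\prod_{k\in S}L_k$ has a \emph{unique} maximizer over $\{|S|=s\}$, namely the top-$s$ index set $\{i:L_i\in\Ls\}$, which is exactly ${\rm supp}(\hat{\eta}^{(s)})$. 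Hence $\hat{\eta}^{(s)}$ is the Bayes rule, giving $\inf_{\tilde\eta}\mathbb{E}_\pi\mathbf{E}_\eta\mathbf{1}(\tilde\eta\ne\eta)=\mathbb{E}_\pi\,{\mathbf P}_\eta(S_{\hat{\eta}^{(s)}}\ne S_\eta)$.

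To finish, I would use permutation equivariance of $\hat{\eta}^{(s)}$: the law of $|\hat{\eta}^{(s)}-\eta|$ under ${\mathbf P}_\eta$, and hence the number ${\mathbf P}_\eta(S_{\hat{\eta}^{(s)}}\ne S_\eta)$, is the same for every $\eta$ with $|\eta|_0=s$, so it equals both its $\pi$-average and its supremum over $\{|\eta|_0=s\}$, namely ${\mathbf P}_{e(s)}(S_{\hat{\eta}^{(s)}}\ne S_{e(s)})$. Chaining the Bayes lower bound, this Bayes-risk identity, the invariance identity, and the trivial bound $\sup_\eta{\mathbf P}_\eta(S_{\hat{\eta}^{(s)}}\ne S_\eta)\ge\inf_{\tilde\eta}\sup_\eta{\mathbf P}_\eta(S_{\tilde\eta}\ne S_\eta)$ forces all quantities to collapse to ${\mathbf P}_{e(s)}(S_{\hat{\eta}^{(s)}}\ne S_{e(s)})$, which is \eqref{eq1:th:minimax:prob:wrong:recovery}--\eqref{eq2:th:minimax:prob:wrong:recovery}. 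For \eqref{eq3:th:minimax:prob:wrong:recovery}, under ${\mathbf P}_{e(s)}$ we have $S_{e(s)}=\{1,\dots,s\}$ and, by \eqref{ass:distinct}, $S_{\hat{\eta}^{(s)}}=\{i:L_i\in\Ls\}$ has exactly $s$ elements a.s.; two $s$-element sets coincide iff one contains the other, so $S_{\hat{\eta}^{(s)}}\ne S_{e(s)}$ iff some $i\in\{1,\dots,s\}$ has $L_i\notin\Ls$, i.e.\ the event $\bigcup_{i=1}^s\{L_i\notin\Ls\}$. The one genuinely delicate point is the MAP identification of the third paragraph: one must handle the measure-theoretic caveats (vanishing likelihood ratios, passage through the marginal $m$) and prove that the maximizer of $\prod_{k\in S}L_k$ is truly unique and equal to the top-$s$ set, since only then does the abstract Bayes rule coincide verbatim with the explicitly defined $\hat{\eta}^{(s)}$; the rest is routine manipulation of the kind already used for Theorems~\ref{thm:lower:sparse:one} and~\ref{thm:lower:sparse:two}.
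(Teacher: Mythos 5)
Your argument is correct. The overall architecture matches the paper's: reduce to indicator Hamming loss, establish Bayes-optimality of $\hat{\eta}^{(s)}$ under the uniform prior, use permutation invariance to collapse Bayes risk, supremum risk and the risk at $e(s)$ to the same number, chain the standard inequalities $\inf\mathbb{E}_\pi\le\inf\sup\le\sup$ evaluated at $\hat{\eta}^{(s)}$, and finally obtain \eqref{eq3:th:minimax:prob:wrong:recovery} from the observation that two $s$-element sets differ iff one has an element outside the other (which is what the paper's display \eqref{eq:2} encodes).

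The one place where you genuinely diverge is the crucial Bayes-optimality step, i.e.\ the first equality in \eqref{eq2:th:minimax:prob:wrong:recovery}. The paper simply cites Lemma~5.3 of \cite{gao2019fundamental} for this, whereas you prove it from scratch by identifying $\hat{\eta}^{(s)}$ as the MAP rule for $\pi$: the posterior weight of $S\in\mathcal{A}$ is proportional to $\prod_{k\in S}L_k$, this is maximized over $\{|S|=s\}$ by taking the $s$ largest $L_k$'s, and under \eqref{ass:distinct} that maximizer is $m$-a.s.\ unique and equal to ${\rm supp}(\hat{\eta}^{(s)})$. You are also careful about the measure-theoretic boundary cases (positivity of $f_0(X_k)$ so that dividing by $\prod_k f_0(X_k)$ is licit, and the fact that at least $s$ of the $L_k$ are a.s.\ positive so the MAP product is not degenerately zero), which the citation would otherwise sweep under the rug. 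The net effect is that your version is self-contained where the paper's is not; the paper's is shorter because it offloads precisely this work to the cited lemma. Both are valid proofs.
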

\begin{proof}
	The first equality in \eqref{eq2:th:minimax:prob:wrong:recovery} is proved
in \cite[Lemma 5.3]{gao2019fundamental}. The second equalities in \eqref{eq1:th:minimax:prob:wrong:recovery} and  in \eqref{eq2:th:minimax:prob:wrong:recovery} are due to the representation ${\mathbf P}_\eta(S_{\hat{\eta}^{(s)}} \ne S_{\eta}) = {\mathbf P}_\eta(|\hat{\eta}^{(s)} - \eta|\ge 1)$ (cf. \eqref{eq:risks}) and the
fact that, by permutation invariance, the distribution of $|\hat{\eta}^{(s)} - \eta|$  under ${\mathbf P}_\eta$ is the same for all $\eta$ such that $|\eta|_{0} = s$. Next, the first equality in \eqref{eq1:th:minimax:prob:wrong:recovery} is obtained from these remarks and the inequalities
$$
\underset{\tilde{\eta}}{\inf} \ \mathbb{E}_{\pi}
{\mathbf P}_\eta(S_{\tilde\eta}  \ne S_\eta) \le \underset{\tilde{\eta}}{\inf} \underset{|\eta|_{0} = s}{\sup}
{\mathbf P}_\eta(S_{\tilde\eta}  \ne S_\eta) \le 
\underset{|\eta|_{0} = s}{\sup}
{\mathbf P}_\eta(S_{\hat{\eta}^{(s)}}  \ne S_\eta).
$$
Thus, \eqref{eq1:th:minimax:prob:wrong:recovery}  and \eqref{eq2:th:minimax:prob:wrong:recovery} follow.
In order to prove  \eqref{eq3:th:minimax:prob:wrong:recovery} it suffices to note that, by virtue of   \eqref{eq:risks}  and \eqref{eq:2},  
\begin{align*}
{\mathbf P}_{e(s)}(S_{\hat{\eta}^{(s)}}  \ne S_{e(s)})
&	= 	{\mathbf P}_{e(s)}(|\hat{\eta}^{(s)} - {e(s)}|\ge 1 )
\\
&
= {\mathbf P}_{e(s)}\Big(2\sum_{i=1}^s \mathbf{1}(L_{i} \not\in \Ls)\ge 1\Big)
\\
&
= {\mathbf P}_{e(s)}\Big(\bigcup_{i=1}^s \{L_{i} \not\in \Ls\}\Big).
\end{align*}
%equality  is due to  From Theorem~\ref{th:minimax:prob:wrong:recovery},
\end{proof}

Finally, we establish a lower bound that will be useful in what follows. 

\begin{theorem}\label{th:lower-both-risks}
	If \eqref{ass:distinct} holds then
	\begin{align}\label{eq:th:lower-both-risks}
	\underset{\tilde{\eta}}{\inf} \underset{|\eta|_{0} = s}{\sup} \mathbf{E}_{\eta} |\tilde{\eta} - \eta| & \ge
	\underset{\tilde{\eta}}{\inf} \underset{|\eta|_{0} = s}{\sup}
	{\mathbf P}_\eta(S_{\tilde\eta}  \ne S_\eta) 
	\\ &
	\ge \nonumber
	\mathbf{P}_{e(s)}\Big(\underset{j=1,\dots,s}{\min}\,L_{j} \leq \underset{j=s+1,\dots,d}{\max} L_j \Big). 
	\end{align}
\end{theorem}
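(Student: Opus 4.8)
The first inequality in \eqref{eq:th:lower-both-risks} is immediate from \eqref{eq:risks}, since for every selector $\tilde\eta$ and every $\eta$ we have ${\mathbf P}_\eta(S_{\tilde\eta}\ne S_\eta)\le \mathbf{E}_\eta|\tilde\eta-\eta|$; taking $\sup$ over $\eta$ and then $\inf$ over $\tilde\eta$ on both sides preserves the inequality. So the whole content is the second inequality, which bounds the minimax probability of wrong recovery from below by $\mathbf{P}_{e(s)}(\min_{j\le s}L_j\le\max_{j>s}L_j)$.

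\medskip

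\noindent\textbf{Strategy for the second inequality.} By Theorem~\ref{th:minimax:prob:wrong:recovery}, the minimax probability of wrong recovery equals ${\mathbf P}_{e(s)}(S_{\hat\eta^{(s)}}\ne S_{e(s)})$, which by \eqref{eq3:th:minimax:prob:wrong:recovery} equals ${\mathbf P}_{e(s)}\big(\bigcup_{i=1}^s\{L_i\notin\Ls\}\big)$. So it suffices to show
$$
{\mathbf P}_{e(s)}\Big(\bigcup_{i=1}^s\{L_i\notin\Ls\}\Big)\ \ge\ \mathbf{P}_{e(s)}\Big(\underset{j=1,\dots,s}{\min}\,L_j\le\underset{j=s+1,\dots,d}{\max}\,L_j\Big).
$$
The plan is to prove this by a pointwise (deterministic) event inclusion, working under \eqref{ass:distinct} so that all $L_i$ are distinct. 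I claim that on the event $\{\min_{j\le s}L_j\le\max_{j>s}L_j\}$ at least one index $i\in\{1,\dots,s\}$ satisfies $L_i\notin\Ls$. Indeed, pick $j^\star\in\{1,\dots,s\}$ attaining $\min_{j\le s}L_j$ and $k^\star\in\{s+1,\dots,d\}$ attaining $\max_{j>s}L_j$; on this event $L_{j^\star}\le L_{k^\star}$, and by distinctness $L_{j^\star}<L_{k^\star}$. Now $\Ls$ consists of the $s$ largest among $L_1,\dots,L_d$. Since $L_{k^\star}$ is strictly larger than $L_{j^\star}$, and $L_{j^\star}$ is the smallest of $L_1,\dots,L_s$, the value $L_{j^\star}$ cannot be among the $s$ largest: either $L_{k^\star}\in\Ls$ (in which case $\Ls$ contains $L_{k^\star}$ plus at most $s-1$ of the remaining values, and $L_{j^\star}$, being $\le$ all other $L_j$ with $j\le s$ and $<L_{k^\star}$, is pushed out), or more simply, the set $\{L_j:j\le s,\ j\ne j^\star\}\cup\{L_{k^\star}\}$ consists of $s$ values all strictly exceeding $L_{j^\star}$, so at least $s$ of the $L_i$ exceed $L_{j^\star}$, whence $L_{j^\star}\notin\Ls$. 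Thus $j^\star$ witnesses $\bigcup_{i=1}^s\{L_i\notin\Ls\}$, giving the event inclusion $\{\min_{j\le s}L_j\le\max_{j>s}L_j\}\subseteq\bigcup_{i=1}^s\{L_i\notin\Ls\}$ up to a ${\mathbf P}_{e(s)}$-null set, and the probability inequality follows.

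\medskip

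\noindent\textbf{Main obstacle.} The argument is elementary; the only point requiring a little care is the counting step that deduces $L_{j^\star}\notin\Ls$, i.e.\ making precise that $s$ distinct values (the $s-1$ values $\{L_j:j\le s,j\ne j^\star\}$ together with $L_{k^\star}$) all strictly exceed $L_{j^\star}$ — here one uses both that $L_{j^\star}=\min_{j\le s}L_j$ (so it is $\le$, hence by distinctness $<$, each $L_j$, $j\le s$, $j\ne j^\star$) and that $L_{j^\star}<L_{k^\star}$ on the event in question, and that $k^\star>s$ so $L_{k^\star}$ is not already counted among the $L_j$, $j\le s$. Since there are then at least $s$ values of the $d$ variables strictly larger than $L_{j^\star}$, the index $j^\star$ is not among the top $s$, so $L_{j^\star}\notin\Ls$. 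No other step presents any difficulty.
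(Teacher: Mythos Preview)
Your proof is correct and follows essentially the same approach as the paper: both invoke Theorem~\ref{th:minimax:prob:wrong:recovery} (specifically \eqref{eq3:th:minimax:prob:wrong:recovery}) to reduce the second inequality to the event inclusion $\{\min_{j\le s}L_j\le\max_{j>s}L_j\}\subseteq\{\min_{j\le s}L_j\notin\Ls\}\subseteq\bigcup_{i=1}^s\{L_i\notin\Ls\}$ under \eqref{ass:distinct}. Your counting argument for $L_{j^\star}\notin\Ls$ is just a more explicit version of what the paper states in one line.
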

\begin{proof}
The first inequality in \eqref{eq:th:lower-both-risks} follows immediately from \eqref{eq:risks}.  Thus, we prove only the second inequality.
Using~\eqref{eq3:th:minimax:prob:wrong:recovery}
we get
\begin{align*}%\label{eq1:th:lower-both-risks}
	\underset{\tilde{\eta}}{\inf} \underset{|\eta|_{0} = s}{\sup}
	{\mathbf P}_{e(s)}(S_{\tilde\eta}  \ne S_{e(s)}) 
	&
	 = {\mathbf P}_{e(s)}\Big(\bigcup_{i=1}^s \{L_{i} \not\in \Ls\}\Big)
	 \\
	 &
	\ge  \mathbf{P}_{e(s)}\Big( \underset{j=1,\dots,s}{\min}\,L_{j} \not\in \Ls \Big).
\end{align*}
The proof is completed by observing that since $L_1,\dots,L_d$ are ${\mathbf P}_{e(s)}$-a.s. distinct (cf. \eqref{ass:distinct})
 the condition $\underset{j=1,\dots,s}{\min}\,L_{j} \leq \underset{j=s+1,\dots,d}{\max} L_j$  implies that $\underset{j=1,\dots,s}{\min}\,L_{j} \not\in \Ls$
 (${\mathbf P}_{e(s)}$-a.s.).
\end{proof}

Although the selector $\hat{\eta}^{(s)}$ can be computed in  polynomial time, it has a drawback to require, in general case, the knowledge of densities $f_0$ and $f_1$ that may not be accessible or have a complicated form. This problem disappears if the MLR property holds, as discussed in the next subsection.

\subsection{Consequences for MLR models}
\label{subsec:MLR}

We assume now that the MLR property is satisfied. 
Let  $X_{(1)}\ge \cdots \ge X_{(d)}$ be the ordered values of $X_1,\dots,X_d$, and let
\begin{equation}\label{S-star}
    \Xs = \{ X_{(1)},\dots,X_{(s)}\}
\end{equation} 
be the set of $s$ largest values in the sample $(X_1,\dots,X_d)$. If $X_i=X_j$ for some $i\ne j$, then any permutation of these two values is allowed in the ordering. However, all $X_i$'s are almost surely distinct if $f_0$ and $f_1$ are densities with respect to the Lebesgue measure.

Clearly, under the MLR property,  the $i$th component of the scan selector  takes the form
$\hat{\eta}^{(s)}_i = \mathbf{1}( X_i \in \Xs)$,
where $\Xs$ is given in (\ref{S-star}).  Therefore, the scan selector is one and the same for all MLR models. In other words, it is adaptive to $f_0$ and $f_1$ under the MLR assumption. 
Using the results of Section \ref{sec:general-lower} we now establish some properties of the selector 
\begin{align}\label{def:scan-selector-underMLR}
 \hat{\eta}^{(s)}_*:=( \mathbf{1}( X_1 \in \Xs),\dots,  \mathbf{1}( X_d \in \Xs)).   
\end{align}
\begin{proposition}\label{lem:scan}
Let $X_1,\dots,X_d$ be ${\mathbf P}_\eta$-a.s. distinct for any $\eta$ such that $|\eta|_{0} = s$.
	Then for any $\eta$ such that $|\eta|_{0} = s$ we have
	\[
	\mathbf{E}_{\eta} |\hat{\eta}^{(s)}_* - \eta| = 2 s\mathbf{P}_{e(s)}\left( X_1 \not\in \Xs \right).
	\]
\end{proposition}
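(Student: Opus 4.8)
The plan is to follow the proof of Proposition~\ref{lem:scan-1} almost verbatim, with the observations $X_i$ playing the role of the likelihood ratios $L_i$ and the set $\Xs$ playing the role of $\Ls$. One might be tempted to instead deduce the statement directly from Proposition~\ref{lem:scan-1}, since under the MLR property $\hat{\eta}^{(s)}_*$ is a version of the scan selector $\hat{\eta}^{(s)}$; but Proposition~\ref{lem:scan-1} is stated under assumption~\eqref{ass:distinct}, i.e.\ the $L_i$ being $\mathbf{P}_\eta$-a.s.\ distinct, and this does not follow from the $X_i$ being distinct unless $\frac{f_1}{f_0}$ is \emph{strictly} increasing. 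So I prefer to give the short self-contained argument.

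First I would invoke permutation invariance: the distribution of $|\hat{\eta}^{(s)}_* - \eta|$ under $\mathbf{P}_\eta$ is the same for every $\eta$ with $|\eta|_{0} = s$, so it is enough to evaluate $\mathbf{E}_{e(s)}|\hat{\eta}^{(s)}_* - e(s)|$. Since $X_1,\dots,X_d$ are $\mathbf{P}_{e(s)}$-a.s.\ distinct, the set $\Xs$ is $\mathbf{P}_{e(s)}$-a.s.\ unambiguously defined and $\hat{\eta}^{(s)}_*$ has $\mathbf{P}_{e(s)}$-a.s.\ exactly $s$ non-zero coordinates. As $e(s)$ also has exactly $s$ non-zero coordinates, the sets $S_{\hat{\eta}^{(s)}_*}\setminus S_{e(s)}$ and $S_{e(s)}\setminus S_{\hat{\eta}^{(s)}_*}$ have the same cardinality $\mathbf{P}_{e(s)}$-a.s., so, exactly as in~\eqref{eq:2},
\[
|\hat{\eta}^{(s)}_* - e(s)| = 2\,|S_{e(s)}\setminus S_{\hat{\eta}^{(s)}_*}| = 2\sum_{i=1}^s \mathbf{1}(X_i \not\in \Xs)\qquad \mathbf{P}_{e(s)}\text{-a.s.}
\]

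Finally, under $\mathbf{P}_{e(s)}$ the variables $X_1,\dots,X_s$ are i.i.d.\ with density $f_1$ and $X_{s+1},\dots,X_d$ are i.i.d.\ with density $f_0$, so the joint law of $(X_1,\dots,X_d)$ is invariant under permutations of the first $s$ coordinates; hence $\mathbf{1}(X_1\not\in\Xs),\dots,\mathbf{1}(X_s\not\in\Xs)$ are identically distributed under $\mathbf{P}_{e(s)}$. Taking expectations in the display above gives
\[
\mathbf{E}_{e(s)}|\hat{\eta}^{(s)}_* - e(s)| = 2\sum_{i=1}^s \mathbf{P}_{e(s)}(X_i \not\in \Xs) = 2s\,\mathbf{P}_{e(s)}(X_1 \not\in \Xs),
\]
which is the claimed identity. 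I do not expect any genuine obstacle: the argument is a routine transcription of the proof of Proposition~\ref{lem:scan-1}, the only point deserving a word of care being that here it is the distinctness of the $X_i$ (and not of the $L_i$) that guarantees both that $\Xs$ is well defined and that $\hat{\eta}^{(s)}_*$ selects exactly $s$ coordinates.
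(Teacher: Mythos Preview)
Your proposal is correct and follows exactly the approach indicated in the paper: the paper simply states that the proof ``follows the same lines as the proof of Proposition~\ref{lem:scan-1} with the only difference that $L_i$ and $\Ls$ are replaced by $X_i$ and $\Xs$,'' which is precisely what you do. Your remark that Proposition~\ref{lem:scan} does not assume the MLR property (so one cannot simply invoke Proposition~\ref{lem:scan-1}) is also explicitly noted in the paper.
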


We emphasize that Proposition \ref{lem:scan} does not require the MLR property. Its proof follows the same lines as the proof of Proposition \ref{lem:scan-1}
with the only difference that $L_i$ and $\Ls$ are replaced by $X_i$ and $\Xs$. 

The next theorem is an immediate consequence 
of Theorem \ref{th:1/2:general}.

\begin{theorem}\label{prop:lower:quantile}
Let $X_1,\dots,X_d$ be ${\mathbf P}_\eta$-a.s. distinct for any $\eta$ such that $|\eta|_{0} = s$.
Assume that $s\ge 2$ and  the MLR property holds. Then
$$
\frac{1}{2}\mathbf{E}_{e(s)} |\hat{\eta}^{(s)}_* - e(s)| \leq \underset{\tilde{\eta}}{\inf} \underset{|\eta|_{0} = s}{\sup} \mathbf{E}_{\eta} |\tilde{\eta} - \eta| \leq \mathbf{E}_{e(s)} |\hat{\eta}^{(s)}_* - e(s)|.
$$
\end{theorem}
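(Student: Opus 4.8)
The plan is to obtain the statement as a direct specialization of Theorem~\ref{th:1/2:general} to MLR models, after identifying the scan selector $\hat{\eta}^{(s)}$ of Section~\ref{sec:general-lower} with the selector $\hat{\eta}^{(s)}_*$ of \eqref{def:scan-selector-underMLR}. First I would record the effect of the MLR property on the order statistics. Since $t\mapsto \frac{f_1}{f_0}(t)$ is increasing on the common support of $f_0$ and $f_1$, applying this map to the sample sends the ordered values $X_{(1)}\ge\cdots\ge X_{(d)}$ to the ordered values $L_{(1)}\ge\cdots\ge L_{(d)}$ and preserves ranks: the rank of $L_i$ among $L_1,\dots,L_d$ equals that of $X_i$ among $X_1,\dots,X_d$, ${\mathbf P}_\eta$-a.s., for every $\eta$ with $|\eta|_{0}=s$. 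In particular $L_i\in\Ls$ if and only if $X_i\in\Xs$, so $\hat{\eta}^{(s)}$ and $\hat{\eta}^{(s)}_*$ coincide ${\mathbf P}_\eta$-a.s.; this is exactly the observation already made in the text preceding \eqref{def:scan-selector-underMLR}, and it is also consistent with Proposition~\ref{proposition:lower:sparse:2}.

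Next I would check that the hypothesis \eqref{ass:distinct} of Theorem~\ref{th:1/2:general} holds in the present setting. Since $\frac{f_1}{f_0}$ is (strictly) increasing, hence injective, on the support, the assumed ${\mathbf P}_\eta$-a.s. distinctness of $X_1,\dots,X_d$ forces $L_1,\dots,L_d$ to be ${\mathbf P}_\eta$-a.s. distinct, which is \eqref{ass:distinct}. Then, with $s\ge2$, Theorem~\ref{th:1/2:general} yields
\[
\frac{1}{2}\,\mathbf{E}_{e(s)}|\hat{\eta}^{(s)}-e(s)|\ \le\ \underset{\tilde{\eta}}{\inf}\,\underset{|\eta|_{0}=s}{\sup}\,\mathbf{E}_{\eta}|\tilde{\eta}-\eta|\ \le\ \mathbf{E}_{e(s)}|\hat{\eta}^{(s)}-e(s)|.
\]
Replacing $\hat{\eta}^{(s)}$ by $\hat{\eta}^{(s)}_*$, which is legitimate since the two selectors agree ${\mathbf P}_{e(s)}$-a.s. and therefore have identical Hamming risk at $e(s)$, gives the claimed chain of inequalities.

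I do not expect a genuine obstacle here; the only point demanding a little care is the passage from "$X_i$ a.s.\ distinct" to "$L_i$ a.s.\ distinct" and to "$\hat{\eta}^{(s)}=\hat{\eta}^{(s)}_*$", both of which rest on the strict monotonicity of $\frac{f_1}{f_0}$ on the support that is built into the MLR assumption. An alternative to invoking Theorem~\ref{th:1/2:general} as a black box would be to rerun its proof with $L_i,\Ls$ replaced throughout by $X_i,\Xs$, using Proposition~\ref{lem:scan} in place of Proposition~\ref{lem:scan-1} and the MLR form of $\eta^{**}$ (its support lies among the indices of the $s$ largest $X_i$) in place of the general one; this is precisely why the result is stated as an immediate consequence.
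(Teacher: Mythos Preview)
Your proposal is correct and follows exactly the approach the paper takes: the paper states the theorem as ``an immediate consequence of Theorem~\ref{th:1/2:general}'', and you have spelled out precisely the two points needed to make this immediate, namely that the MLR property forces $\hat{\eta}^{(s)}=\hat{\eta}^{(s)}_*$ ${\mathbf P}_\eta$-a.s.\ (as the paper already notes before \eqref{def:scan-selector-underMLR}) and that strict monotonicity of $f_1/f_0$ transfers a.s.\ distinctness of the $X_i$'s to the $L_i$'s, yielding \eqref{ass:distinct}.
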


%\simo{treat case of equality for more than $s$ terms.}

%
%As an application, observe that when component $1$ belongs to the true support of the signal, then
%$$
%X_{(s),-1} \geq Z_{(s)},
%$$
%where $Z_{(s)}$ is the $s$-largest component not in the support (noise component). Hence, and based on the fact that $Z_{(s)}$ can be well approximated by $F_{0}^{-1}(1-\frac{s}{d-s})$, we recover the same result as in Proposition \ref{proposition:lower:sparse:1} through block sparsity.

	We also have the following lower bound.
\begin{corollary}\label{cor:exact_rec}
Let $X_1,\dots,X_d$ be ${\mathbf P}_\eta$-a.s. distinct for any $\eta$ such that $|\eta|_{0} = s$.
Assume that  the MLR property holds. Then
\begin{align}\label{eq:cor:exact_rec}
	\underset{\tilde{\eta}}{\inf} \underset{|\eta|_{0} = s}{\sup} \mathbf{E}_{\eta} |\tilde{\eta} - \eta| & \ge
	\underset{\tilde{\eta}}{\inf} \underset{|\eta|_{0} = s}{\sup}
	{\mathbf P}_\eta(S_{\tilde\eta}  \ne S_\eta) 
	\\ &
	\ge \nonumber
	\mathbf{P}_{e(s)}\Big(\underset{j=1,\dots,s}{\min}\,X_{j} \leq \underset{j=s+1,\dots,d}{\max} X_j \Big). 
	\end{align}
Furthermore, under the assumption $F^{-1}_{0}(1-1/(d-s)) > F^{-1}_{1}(1/s)$ we have
\begin{equation}\label{eq:cor:exact_rec1}
\underset{\tilde{\eta}}{\inf} \underset{|\eta|_{0} = s}{\sup} \mathbf{E}_{\eta} |\tilde{\eta} - \eta|
\ge
	\underset{\tilde{\eta}}{\inf} \underset{|\eta|_{0} = s}{\sup}
	{\mathbf P}_\eta(S_{\tilde\eta}  \ne S_\eta) \geq (1-e^{-1})^2,
\end{equation}
that is, the condition $F^{-1}_{0}(1-1/(d-s_d)) \leq F^{-1}_{1}(1/s_d)$  on the sequence $(s_d)_{d\ge 1}$ for all $d$ large enough is necessary for exact recovery.
%the condition $F^{-1}_{0}(1-1/(d-s)) \leq F^{-1}_{1}(1/s)$ is necessary for exact recovery.
\end{corollary}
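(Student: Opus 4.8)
\noindent The corollary is essentially Theorem~\ref{th:lower-both-risks} re-expressed in terms of the raw observations $X_j$ rather than the likelihood ratios $L_j=(f_1/f_0)(X_j)$, supplemented by an elementary two-block estimate; I would proceed in three short steps.

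\textbf{Step 1 (the inequalities \eqref{eq:cor:exact_rec}).} The first inequality is immediate from \eqref{eq:risks}: $\mathbf{P}_\eta(S_{\tilde\eta}\ne S_\eta)\le\mathbf{E}_\eta|\tilde\eta-\eta|$ for every $\eta$ and every $\tilde\eta$, and this survives taking $\sup_{|\eta|_0=s}$ followed by $\inf_{\tilde\eta}$. For the second inequality I would apply Theorem~\ref{th:lower-both-risks}, whose hypothesis \eqref{ass:distinct} holds under the present assumptions (cf.\ the remark following \eqref{ass:distinct}), so
\[
\inf_{\tilde\eta}\sup_{|\eta|_0=s}\mathbf{P}_\eta(S_{\tilde\eta}\ne S_\eta)\ \ge\ \mathbf{P}_{e(s)}\Big(\min_{j=1,\dots,s}L_j\le\max_{j=s+1,\dots,d}L_j\Big).
\]
Because $f_1/f_0$ is increasing, if $j_0\in\{1,\dots,s\}$ attains $\min_{j\le s}X_j$ and $k_0\in\{s+1,\dots,d\}$ attains $\max_{j>s}X_j$, then $X_{j_0}\le X_{k_0}$ forces $L_{j_0}=(f_1/f_0)(X_{j_0})\le(f_1/f_0)(X_{k_0})=L_{k_0}$, hence $\min_{j\le s}L_j\le\max_{j>s}L_j$. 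Thus $\{\min_{j\le s}X_j\le\max_{j>s}X_j\}\subseteq\{\min_{j\le s}L_j\le\max_{j>s}L_j\}$, the probabilities compare in the required direction, and \eqref{eq:cor:exact_rec} follows.

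\textbf{Step 2 (the constant $(1-e^{-1})^2$).} Put $\alpha=F_0^{-1}(1-1/(d-s))$, so that $F_0(\alpha)=1-1/(d-s)$ and, by the assumed inequality, $\alpha>F_1^{-1}(1/s)$, whence $F_1(\alpha)\ge 1/s$. Under $\mathbf{P}_{e(s)}$ the blocks $(X_1,\dots,X_s)$ (i.i.d.\ $f_1$) and $(X_{s+1},\dots,X_d)$ (i.i.d.\ $f_0$) are independent, so
\[
\mathbf{P}_{e(s)}\Big(\min_{j\le s}X_j\le\max_{j>s}X_j\Big)\ \ge\ \mathbf{P}_{e(s)}\Big(\min_{j\le s}X_j\le\alpha\Big)\,\mathbf{P}_{e(s)}\Big(\max_{j>s}X_j\ge\alpha\Big),
\]
where the first factor equals $1-(1-F_1(\alpha))^s\ge 1-(1-1/s)^s\ge 1-e^{-1}$ and the second is $\ge 1-F_0(\alpha)^{d-s}=1-(1-1/(d-s))^{d-s}\ge 1-e^{-1}$. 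Combining this with \eqref{eq:cor:exact_rec} gives \eqref{eq:cor:exact_rec1}.

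\textbf{Step 3 (necessity for exact recovery) and the main difficulty.} If $F_0^{-1}(1-1/(d-s_d))>F_1^{-1}(1/s_d)$ held along an infinite set of indices $d$, then \eqref{eq:cor:exact_rec1} would give $\inf_{\tilde\eta}\sup_{|\eta|_0=s_d}\mathbf{E}_\eta|\tilde\eta-\eta|\ge(1-e^{-1})^2$ along that set, so no selector $\widehat\eta$ could achieve $\limsup_{d}\sup_{|\eta|_0=s_d}\mathbf{E}_\eta|\widehat\eta-\eta|=0$; hence $F_0^{-1}(1-1/(d-s_d))\le F_1^{-1}(1/s_d)$ for all large $d$ is necessary for exact recovery. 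The only genuinely delicate point is Step~1: one must check that \eqref{ass:distinct} does hold under the corollary's hypotheses and that monotonicity of $f_1/f_0$ really delivers the event inclusion (the reverse inclusion may fail where $f_1/f_0$ is flat, but only the stated inclusion is used); the remaining computations are routine.
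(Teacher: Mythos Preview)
Your proof is correct and follows essentially the same route as the paper's. The paper derives \eqref{eq:cor:exact_rec} by invoking Theorem~\ref{th:lower-both-risks} together with the MLR property exactly as you do, and for \eqref{eq:cor:exact_rec1} it also splits the event $\{\min_{j\le s}X_j\le\max_{j>s}X_j\}$ via an intermediate threshold and bounds each factor by $1-e^{-1}$. The only cosmetic difference is that the paper inserts \emph{two} thresholds, $F_1^{-1}(1/s)$ for the minimum and $F_0^{-1}(1-1/(d-s))$ for the maximum, whereas you use the single threshold $\alpha=F_0^{-1}(1-1/(d-s))$ for both; under the hypothesis $\alpha>F_1^{-1}(1/s)$ your choice works equally well and is arguably a touch cleaner.
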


The bound \eqref{eq:cor:exact_rec} follows immediately from Theorem \ref{th:lower-both-risks} and the MLR property. 
The proof of \eqref{eq:cor:exact_rec1} is given in the Appendix.

Note that the necessary condition $F^{-1}_{0}(1-1/(d-s_d)) \leq F^{-1}_{1}(1/s_d)$  of exact recovery based on Corollary \ref{cor:exact_rec} provides a stronger result than the condition $F^{-1}_{0}(1-s_d/(d-s_d)) \le F^{-1}_{1}(1/s_d)$ obtained via the block prior technique of Section \ref{sec:block} (cf.  Corollary \ref{cor:almost_rec}). 

The lower bound \eqref{eq:cor:exact_rec} has a nice interpretation as it tells us that exact recovery is impossible as long as the event, where the distributions on the support of $e(s)$ and outside of the support cannot be separated, is not trivial. Considering only $e(s)$ is without loss of generality since, by permutation invariance, the r.h.s. of \eqref{eq:cor:exact_rec} can be expressed in terms of $\mathbf{P}_\eta$ and the corresponding quantities for any $\eta\in\Theta_d(s)$.

Finally, the following proposition allows us to relate the risk of the selector 
$\hat{\eta}^{(s)}_*$ to the risk of any separable thresholding selector. It will be useful in the examples considered below in Sections \ref{sec:log-concave} and \ref{sec:grouped}.
\begin{proposition}\label{prop:relation-to-thresholding-lambda} Let $X_1,\dots,X_n$ be distinct real numbers and $\lambda>0$. Consider the thresholding selector
$
\bar{\eta}_\lambda:=( \mathbf{1}( X_1>\lambda),\dots,  \mathbf{1}( X_d >\lambda)).
$
For any $\eta$ such that $|\eta|_0=s$ we have
 $$|\hat{\eta}^{(s)}_* - \eta| \le 2|\bar{\eta}_\lambda - \eta|.$$
\end{proposition}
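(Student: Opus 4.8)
\emph{Proof plan.} The statement is purely deterministic — no probability enters — and the plan is to reduce it to a counting inequality for three support sets and then distinguish two regimes. Write $S=S_\eta$, so $|S|=s$; let $\hat S=\{i:X_i\in\Xs\}$ be the support of $\hat{\eta}^{(s)}_*$; let $T=\{i:X_i>\lambda\}$ be the support of $\bar{\eta}_\lambda$; and set $r=|T|$. Because the $X_i$ are distinct, $\Xs$ has exactly $s$ elements, so $|\hat S|=s=|S|$, hence $|S\setminus\hat S|=|\hat S\setminus S|$, whence
$$
|\hat{\eta}^{(s)}_*-\eta|=|S\,\triangle\,\hat S|=2\,|S\setminus\hat S|,
\qquad
|\bar{\eta}_\lambda-\eta|=|S\,\triangle\,T|=|S\setminus T|+|T\setminus S|.
$$
It therefore suffices to establish the inequality $|S\setminus\hat S|\le|S\setminus T|+|T\setminus S|$.

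The key observation is that the values exceeding $\lambda$ are exactly the $r$ largest order statistics, so either $T\subseteq\hat S$ or $\hat S\subseteq T$. First I would treat the case $r<s$: here all $r$ indices in $T$ are among the $s$ largest, so $T\subseteq\hat S$, hence $\hat S^{\,c}\subseteq T^{\,c}$ and $S\setminus\hat S\subseteq S\setminus T$; the inequality is then immediate. Next, the case $r\ge s$: here $X_{(s)}>\lambda$, so $\hat S\subseteq T$, equivalently $T^{\,c}\subseteq\hat S^{\,c}$. I would split $S\setminus\hat S$ according to membership in $T$. The part lying outside $T$ equals $S\cap T^{\,c}=S\setminus T$, using $T^{\,c}\subseteq\hat S^{\,c}$. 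The part lying inside $T$ is $(S\cap T)\setminus\hat S\subseteq T\setminus\hat S$, and since $\hat S\subseteq T$ this set has cardinality at most $|T\setminus\hat S|=|T|-|\hat S|=r-s$; finally $r-s\le r-|S\cap T|=|T\setminus S|$ because $|S\cap T|\le|S|=s$. Adding the two parts gives $|S\setminus\hat S|\le|S\setminus T|+|T\setminus S|$, and doubling yields the claim.

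The step that needs the most care is the bookkeeping in the case $r\ge s$: one has to use both formulations of the same fact, $\hat S\subseteq T$ and $T^{\,c}\subseteq\hat S^{\,c}$, in the appropriate places, together with the identity $|T\setminus S|=r-|S\cap T|$ and the bound $|S\cap T|\le s$, in order to pass from the natural estimate $|(S\cap T)\setminus\hat S|\le r-s$ to the estimate by $|T\setminus S|$ that is actually needed. Everything else is elementary set algebra, and the bound is tight, as simple three-point examples show.
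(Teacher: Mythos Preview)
Your proof is correct and follows the same overall structure as the paper's: the same reduction to support sets, the same observation $|\hat S|=|S|=s$, and the same two-case split according to whether $T\subseteq\hat S$ or $\hat S\subseteq T$.

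The only difference is in Case~2 ($r\ge s$, i.e.\ $\hat S\subseteq T$). You prove $|S\setminus\hat S|\le|S\setminus T|+|T\setminus S|$ by splitting $S\setminus\hat S$ into its parts inside and outside $T$ and then doing the counting you describe. The paper instead exploits the symmetric identity $|\hat\eta^{(s)}_*-\eta|=2|\hat S\setminus S|$ (not only $2|S\setminus\hat S|$): from $\hat S\subseteq T$ one gets immediately $|\hat S\setminus S|\le|T\setminus S|\le|\bar\eta_\lambda-\eta|$, a one-line argument. So the paper uses the two equal representations $2|S\setminus\hat S|$ and $2|\hat S\setminus S|$, picking in each case the one that makes the inclusion trivial; you fix the representation $2|S\setminus\hat S|$ throughout and compensate with the extra bookkeeping in Case~2. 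Both are fine; the paper's version avoids the chain $|(S\cap T)\setminus\hat S|\le r-s\le|T\setminus S|$ entirely.
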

Noteworthy, Proposition \ref{prop:relation-to-thresholding-lambda} is a deterministic fact. The inequality holds directly for the losses and not only for the risks, and it is true for arbitrary threshold $\lambda>0$. Moreover, as  $X_i$'s can be any real numbers, the result is valid, for example, if we take $L_i$'s instead of $X_i$'s in the definition of $\bar{\eta}_\lambda$, and we replace $\hat{\eta}^{(s)}_*$ by $\hat{\eta}^{(s)}$.

\subsection{Connection to the risk of the best separable selector
}\label{sec:lower_bound_impossible}

Throughout this subsection, we assume that the MLR property is satisfied, 
	and $f_0$, $f_1$ are densities with respect to the Lebesgue measure on $\mathbf{R}$. In particular, it implies \eqref{ass:distinct}. We also assume that $s\ge 2$.

We first introduce some notation. Let $t_{1}$ be the unique solution of the 
equation $$
(s-1)F_{1}(x) = (d-s)(1-F_{0}(x)).
$$
We also assume that there exists a solution $t_{2}$ of the
equation
$$
(s-1)f_{1}(x) = (d-s)f_{0}(x).
$$
Note that if $t_{2}$ exists, it is unique due to the MLR property. We finally define the function $\Psi:\mathbf{R}\to \mathbf{R}$ with values
$$
\quad \Psi(x)= (s-1)F_{1}(x)+(d-s)(1-F_{0}(x)), \quad  x \in \mathbf{R}.
$$
Notice that $\Psi(t_{2})$ is very close to the risk of the best separable selector $\Psi_{\rm sep}(d,s)$ defined in \eqref{eq:old_bound}.  
The following proposition relates $\Psi(t_{1})$, $\Psi(t_{2})$ and $\Psi_{\rm sep}$.
\begin{proposition}\label{lem:comparison}
	Assume that $s\ge2$. Let the MLR property be satisfied, let
	$f_0$, $f_1$ be densities with respect to the Lebesgue measure on $\mathbf{R}$, and let $t_{1},t_{2},\Psi(\cdot)$ be as defined above. Then
	\begin{align}\label{eq1:lem:comparison}
	\Psi(t_{2})&= \Psi_{\rm sep}(d-1,s-1), \\
\Psi(t_{2}) &\leq \Psi(t_{1}) \leq 2\Psi(t_{2}). \label{eq2:lem:comparison}
	\end{align}
\end{proposition}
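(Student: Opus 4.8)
The plan is to work directly with the definitions of $t_1$, $t_2$, $\Psi$, and $\Psi_{\rm sep}$, exploiting monotonicity and the MLR structure. First I would establish \eqref{eq1:lem:comparison}. Recall from \eqref{eq:old_bound} that
$$
\Psi_{\rm sep}(d-1,s-1) = (s-1)P_1\big((s-1)f_1(X_1)<(d-s)f_0(X_1)\big) + (d-s)P_0\big((s-1)f_1(X_1)\ge(d-s)f_0(X_1)\big).
$$
By the MLR property, $f_1/f_0$ is increasing, so the event $\{(s-1)f_1(X_1)\ge(d-s)f_0(X_1)\}$ is $\mathbf{P}_{e(s)}$-a.s. equal to $\{X_1\ge t_2\}$ (using that $t_2$ solves $(s-1)f_1(x)=(d-s)f_0(x)$, and recalling $t_2$ is unique when it exists). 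Therefore $P_1(\text{first event}) = F_1(t_2)$ and $P_0(\text{second event}) = 1-F_0(t_2)$, which gives $\Psi_{\rm sep}(d-1,s-1) = (s-1)F_1(t_2)+(d-s)(1-F_0(t_2)) = \Psi(t_2)$. That settles \eqref{eq1:lem:comparison}.

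Next, for \eqref{eq2:lem:comparison} I would analyze $\Psi$ as a function of $x$. Its derivative is $\Psi'(x) = (s-1)f_1(x)-(d-s)f_0(x)$, which by the MLR property changes sign exactly once, from negative to positive, at $x=t_2$ (the ratio $f_1/f_0$ being increasing forces $(s-1)f_1-(d-s)f_0$ to be increasing through its unique zero). Hence $\Psi$ is minimized at $t_2$, giving $\Psi(t_2)\le\Psi(t_1)$ immediately — the left inequality of \eqref{eq2:lem:comparison}. For the right inequality $\Psi(t_1)\le 2\Psi(t_2)$, the idea is to use the defining equation of $t_1$, namely $(s-1)F_1(t_1) = (d-s)(1-F_0(t_1))$, so that both summands in $\Psi(t_1) = (s-1)F_1(t_1)+(d-s)(1-F_0(t_1))$ are equal, and $\Psi(t_1) = 2(s-1)F_1(t_1) = 2(d-s)(1-F_0(t_1))$. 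It then suffices to show $(s-1)F_1(t_1)\le\Psi(t_2)$, or equivalently (by symmetry of the two expressions for $\Psi(t_1)$) $(d-s)(1-F_0(t_1))\le\Psi(t_2)$. Since $\Psi(t_2) = (s-1)F_1(t_2)+(d-s)(1-F_0(t_2))$, it is enough to show either $(s-1)F_1(t_1)\le(s-1)F_1(t_2)$ or $(d-s)(1-F_0(t_1))\le(d-s)(1-F_0(t_2))$; that is, either $t_1\le t_2$ or $t_1\ge t_2$ gives one of these for free, and in fact whichever ordering holds, the corresponding half of $\Psi(t_2)$ dominates the (equal) halves of $\Psi(t_1)$. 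Concretely: if $t_1\le t_2$ then $F_1(t_1)\le F_1(t_2)$, so $(s-1)F_1(t_1)\le(s-1)F_1(t_2)\le\Psi(t_2)$, hence $\Psi(t_1)=2(s-1)F_1(t_1)\le2\Psi(t_2)$; if $t_1\ge t_2$ then $1-F_0(t_1)\le1-F_0(t_2)$, so $(d-s)(1-F_0(t_1))\le(d-s)(1-F_0(t_2))\le\Psi(t_2)$, hence $\Psi(t_1)=2(d-s)(1-F_0(t_1))\le2\Psi(t_2)$. Either way the bound follows.

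The main obstacle I anticipate is not the inequality chain itself, which is elementary once the right reformulation is in place, but rather handling the a.s. identifications and the existence/uniqueness bookkeeping cleanly: one must verify that $t_1$ is genuinely well-defined (the map $x\mapsto(s-1)F_1(x)-(d-s)(1-F_0(x))$ is continuous and strictly increasing from a negative value to $(s-1)+(d-s)>0$, so it has a unique zero), that $\Psi$ is continuous so the minimum at $t_2$ is attained in the usual calculus sense, and that under MLR with Lebesgue densities the relevant events coincide with half-lines up to null sets so that the translation into $F_0, F_1$ values is exact. I would state these preliminary facts first, then run the two-case argument above.
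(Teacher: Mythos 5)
Your proof is correct and follows essentially the same route as the paper: the right-hand inequality is established by the identical two-case argument on the sign of $t_1-t_2$ using $\Psi(t_1)=2(s-1)F_1(t_1)=2(d-s)(1-F_0(t_1))$, and the left-hand inequality is the observation that $\Psi$ is minimized at $t_2$. The only cosmetic difference is that you phrase the minimality of $\Psi(t_2)$ via the pointwise sign change of $\Psi'(x)=(s-1)f_1(x)-(d-s)f_0(x)$, while the paper writes the equivalent fact in integral form, $d-s-\Psi(t_2)=\sup_{t}\int_{-\infty}^{t}[(d-s)f_0-(s-1)f_1]\,du$.
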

Thus, the values $\Psi(t_{1})$ and $\Psi(t_{2})$ can be viewed as tight characterizations of the risk of the best separable selector.
Furthermore, we show that they determine the minimax risk to within numerical constants in the zone where exact recovery is impossible. Indeed, the following lower bound holds.

\begin{theorem}\label{theorem3}
Let the assumptions of Proposition \ref{lem:comparison} be satisfied, and $2\le s \leq (d+2)/3$, $\Psi(t_1) \geq 24$. Then
$$
\underset{\tilde{\eta}}{\inf} \underset{|\eta|_{0} = s}{\sup} \mathbf{E}_{\eta} |\tilde{\eta} - \eta| \geq \frac{1}{20}\Psi(t_{1}) \geq \frac{1}{20}\Psi(t_{2})= \frac{1}{20} \Psi_{\rm sep}(d-1,s-1).
$$
\end{theorem}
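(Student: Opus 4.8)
The plan is to obtain the bound from the factor-$2$ characterization of the minimax risk by the scan selector (Theorem~\ref{prop:lower:quantile} together with Proposition~\ref{lem:scan}), and then to estimate the single probability appearing there. Since $s\ge2$, the MLR property holds, and $f_0,f_1$ are densities w.r.t.\ Lebesgue measure (so $X_1,\dots,X_d$ are a.s.\ distinct), Theorem~\ref{prop:lower:quantile} and Proposition~\ref{lem:scan} give
$$
\underset{\tilde{\eta}}{\inf} \underset{|\eta|_{0} = s}{\sup} \mathbf{E}_{\eta} |\tilde{\eta} - \eta| \;\ge\; \tfrac12\,\mathbf{E}_{e(s)}|\hat{\eta}^{(s)}_* - e(s)| \;=\; s\,\mathbf{P}_{e(s)}\!\big(X_1\notin\Xs\big),
$$
so it suffices to prove $s\,\mathbf{P}_{e(s)}(X_1\notin\Xs)\ge \tfrac1{20}\Psi(t_1)$; the two remaining inequalities in the statement are exactly \eqref{eq2:lem:comparison} and \eqref{eq1:lem:comparison} of Proposition~\ref{lem:comparison}.

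Next I would condition on $X_1$. Put $a:=\tfrac12\Psi(t_1)=(s-1)F_1(t_1)=(d-s)(1-F_0(t_1))$, the last equalities being the definition of $t_1$. Under $\mathbf{P}_{e(s)}$ the event $\{X_1\notin\Xs\}$ means that at least $s$ of $X_2,\dots,X_d$ exceed $X_1$; given $X_1=x$, the number of such indices has the law of $N(x):=B_0(x)+B_1(x)$ with $B_0(x)\sim\mathrm{Bin}(d-s,1-F_0(x))$ and $B_1(x)\sim\mathrm{Bin}(s-1,1-F_1(x))$ independent, and $x\mapsto\mathbf{P}(N(x)\ge s)$ is non-increasing and continuous. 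Integrating over $\{X_1\le t_1\}$ alone,
$$
\mathbf{P}_{e(s)}\!\big(X_1\notin\Xs\big)\;\ge\; F_1(t_1)\,\mathbf{P}\big(N(t_1)\ge s\big)\;=\;\frac{a}{s-1}\,\mathbf{P}\big(N(t_1)\ge s\big),
$$
and, by the definition of $t_1$, $\mathbf{E}[N(t_1)]=(s-1)(1-F_1(t_1))+(d-s)(1-F_0(t_1))=s-1$.

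The heart of the argument is then to show $\mathbf{P}(N(t_1)\ge s)\ge\tfrac1{10}$, i.e.\ that a sum of $d-1$ independent Bernoulli variables with \emph{integer} mean $m:=s-1$ puts mass at least $\tfrac1{10}$ on $\{\ge m+1\}$. This is where both hypotheses enter. From $s\le(d+2)/3$ one gets $d-s\ge2(s-1)$, hence $1-F_0(t_1)=a/(d-s)\le a/\big(2(s-1)\big)\le\tfrac12$ (using $a\le s-1$); from $\Psi(t_1)\ge24$ one gets $a\ge12$; combining,
$$
\mathrm{Var}\big(N(t_1)\big)\;\ge\;\mathrm{Var}\big(B_0(t_1)\big)\;=\;a\,F_0(t_1)\;\ge\;\tfrac{a}{2}\;\ge\;6 .
$$
Writing $\sigma^2:=\mathrm{Var}(N(t_1))$ and bounding the sum of third absolute central moments of the Bernoulli summands by $\sum q_i(1-q_i)=\sigma^2$, the Berry--Esseen theorem (with an absolute constant $C_0\le0.77$) gives $\mathbf{P}(N(t_1)\le m)\le\tfrac12+C_0/\sigma\le\tfrac12+0.77/\sqrt6$, whence $\mathbf{P}(N(t_1)\ge m+1)\ge\tfrac12-0.77/\sqrt6>\tfrac1{10}$ (alternatively one can argue with an elementary median/anticoncentration estimate for the Poisson--binomial distribution). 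Plugging back,
$$
\underset{\tilde{\eta}}{\inf} \underset{|\eta|_{0} = s}{\sup} \mathbf{E}_{\eta} |\tilde{\eta} - \eta|\;\ge\; s\cdot\frac{a}{s-1}\cdot\frac1{10}\;\ge\;\frac{a}{10}\;=\;\frac{\Psi(t_1)}{20}\;\ge\;\frac{\Psi(t_2)}{20}\;=\;\frac{\Psi_{\rm sep}(d-1,s-1)}{20},
$$
the last two steps being \eqref{eq2:lem:comparison} and \eqref{eq1:lem:comparison}.

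The delicate point I expect is exactly the Poisson--binomial lower-tail estimate: a sum of independent Bernoulli variables with integer mean $m$ can in principle concentrate entirely on $\{\le m\}$ (e.g.\ if it is deterministic), so one must rule this out quantitatively. The two numerical hypotheses do precisely this — $s\le(d+2)/3$ keeps $B_0(t_1)$ from being too right-skewed ($1-F_0(t_1)\le\tfrac12$), and $\Psi(t_1)\ge24$ forces $\mathrm{Var}(B_0(t_1))\ge6$ — after which a Berry--Esseen (or median-plus-mode) bound closes the argument with room to spare, and careful bookkeeping of constants pins down the numbers $24$ and $\tfrac1{20}$.
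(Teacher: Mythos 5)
Your proof is correct and follows essentially the same route as the paper: reduce to $s\,\mathbf{P}_{e(s)}(X_1\notin\Xs)$ via Theorem~\ref{prop:lower:quantile} and Proposition~\ref{lem:scan}, split off the event $\{X_1\le t_1\}$ (the paper does this by independence and you by conditioning and monotonicity of $x\mapsto\mathbf{P}(N(x)\ge s)$ — the same bound), reduce to a Poisson--binomial lower-tail estimate with integer mean $s-1$, bound the variance from below by $\Psi(t_1)/4$ using $s\le(d+2)/3$, and finish with Berry--Esseen. The only cosmetic differences are that you bound the variance through $\mathrm{Var}(B_0(t_1))=aF_0(t_1)\ge a/2$ whereas the paper carries along the full variance identity, and you anchor the normal approximation at the point $0$ rather than $1/\sqrt{V}$; both variants give a constant comfortably above $1/10$, and the paper's Lemma~\ref{lem:quantile} is exactly your "heart of the argument" packaged as a separate statement.
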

On the other hand, using \eqref{eq:minimax-separable} we find that 
\begin{equation*}
{\underset{\tilde{\eta}}{\inf} \underset{|\eta|_{0} = s}{\sup} \mathbf{E}_{\eta} |\tilde{\eta} - \eta| \le     
\Psi_{\rm sep}(d,s).}
\end{equation*} 
Hence the result of Theorem \ref{theorem3} is non-asymptotic minimax optimal in the regime where $\Psi(t_1)$ is greater than an absolute constant (that is, where exact recovery is not possible). 

To summarize,  in general the minimax risk is given  by the risk of the scan selector $\hat{\eta}^{(s)}$ to within factor 2. Moreover, under the MLR and Lebesgue density assumptions, in the regime where exact recovery is impossible, the minimax risk is characterized by  the explicit quantity, which is the risk of the best separable selector.   It remains an interesting open question to derive a similar kind of characterization  in the regime where exact recovery is possible.

\section{Variable selection in light tail location models}
\label{sec:log-concave}
\setcounter{equation}{0}

In this section we apply the results of Section \ref{sec:minimax-and-bayes} to variable selection in the model
\begin{align}\label{model:log-concave}
X_i = \mu_i + \sigma \xi_i, \quad \text{for} \quad i=1,\dots,d,
\end{align}
where $\sigma>0$ and the random variables $\xi_i$ are i.i.d. with c.d.f. $F$ belonging to the following class of light tail distributions
\begin{align*}
\mathcal{P}_\nu &= \{ F: \R \to [0,1] , F \text{ is a c.d.f. such that } 1-F(u) \leq e^{-|u|^{\nu}/\nu}  \\ & \qquad
\text{ and }  F(-u) \leq e^{-|u|^{\nu}/\nu},\ \forall u \geq 0\}
\end{align*}
indexed by parameter $\nu \ge 1$. Let $a>0$. We assume that the mean parameter $\mu=(\mu_1,\dots,\mu_d) $ belongs to the set
\begin{align*}
\Theta_{d} (s,a) = \{& \mu \in \R^{d}: \text{there exists } S \subset \{1,\dots,d\} \text { with } |S| = s, \\
&\mu_j \geq a \text { for } j \in S  \text { and } \mu_j =0 \text { for } j \not \in S\}.
\end{align*}
The problem of variable selection under this model is stated as follows. 
For each vector $\mu\in\Theta_{d} (s,a)$, we consider the set $S_\mu$ of its non-zero components and define the vector $\eta(\mu) = (\eta_1(\mu), \ldots, \eta_d(\mu))$, where $\eta_j(\mu) = \mathbf{1} ( j \in S_\mu)$.
Variable selection consists in  estimating the binary vector $\eta(\mu)$  based on the observations $(X_1,\dots,X_d)$ satisfying \eqref{model:log-concave}. 
We evaluate the performance of a selector $\widehat \eta$ by the Hamming distance $|\widehat \eta  - \eta(\mu)| = \sum_{j=1}^d |\widehat \eta_j - \eta_j(\mu)|$.  As above, we use the expected Hamming risk and the probability of wrong recovery as risk measures.  

For a sequence $(s_d,a_d)_{d \geq 1}$, we say that a selector $\widehat \eta$ achieves exact recovery for $\Theta_{d}(s_d,a_d)$  if 
\begin{align}\label{exact-rec-lc}
\limsup_{d\to \infty} \underset{F \in \mathcal{P}_\nu}{\sup}  \ \sup_{\mu \in \Theta_{d} (s_d,a_d)} \, {\mathbf E}_{\mu,F} |\widehat \eta - \eta(\mu)| =0
\end{align}
and it achieves almost full recovery for $\Theta_{d}(s_d,a_d)$  if
\begin{align}\label{almost-full-rec-lc}
\limsup_{d \to \infty} \underset{F \in \mathcal{P}_\nu}{\sup} \ \sup_{\mu \in \Theta_{d} (s_d,a_d)} \, \frac{1}{s_d} {\mathbf E}_{\mu,F} |\widehat \eta - \eta(\mu)| =0,
\end{align}
where ${\mathbf E}_{\mu,F}$ denotes the expectation with respect to the probability measure ${\mathbf P}_{\mu,F}$ of the observations  $(X_1,\dots,X_d)$ satisfying \eqref{model:log-concave}.  An analog of exact recovery property \eqref{exact-rec-lc} can be also stated in a weaker form, with the probability of wrong recovery rather than the expected Hamming risk:
\begin{align}\label{exact-rec-lc-PWR}
\limsup_{d\to \infty} \underset{F \in \mathcal{P}_\nu}{\sup}  \ \sup_{\mu \in \Theta_{d} (s_d,a_d)} \, {\mathbf P}_{\mu,F} (S_{\widehat \eta} \ne S_{\eta(\mu)}) =0.
\end{align}
The aim of this section is to characterize the range of sequences $(a_d)$, for which exact recovery and almost full recovery properties defined in 
\eqref{exact-rec-lc}, \eqref{almost-full-rec-lc} and \eqref{exact-rec-lc-PWR} are possible. We establish what is usually referred to as {\it sharp
necessary and sufficient conditions of recovery}.  Namely, in each case we find the critical sequence $a_d^*\to \infty$ such that there exists a selector achieving the corresponding form of recovery if $a_d\ge a_d^*(1+\alpha_d)$ for some $\alpha_d\to 0$, and no selector can achieve it if $a_d< a_d^*(1+\beta_d)$ for some $\beta_d\to 0$ as $d\to \infty$.  If this holds, we will say that $a_d^*$ realizes the sharp phase transition.

Analogous model was recently studied in \cite{gao2019fundamental} and \cite{abraham2021sharp} where, instead of dealing with the class of c.d.f.'s  $\mathcal{P}_\nu$, the authors considered fixed c.d.f. $F$ with light tail density satisfying similar condition on the tail (Subbotin type condition).
The results in \cite{abraham2021sharp} provide sharp necessary and sufficient conditions of recovery with respect to the multiple testing risk rather than  to the expected Hamming risk and the probability of wrong recovery  that we consider here. %in \eqref{exact-rec-lc}, \eqref{almost-full-rec-lc} and \eqref{exact-rec-lc-PWR}.
The paper \cite{gao2019fundamental} studies the asymptotics of the probability of wrong recovery.  Under the additional polynomial growth assumption $s_d\sim d^{\beta}$ 
%=\lfloor d^{1-\beta} \rfloor$ 
for some fixed 
$\beta$ in (0,1), it establishes 
sufficient conditions for exact recovery  and proves that these conditions are necessary within the set of thresholding selectors. 

The following non-asymptotic minimax lower bounds imply the necessary conditions of recovery over all selectors with no polynomial growth restriction.  
\begin{theorem}\label{prop:app_1}
Let $\nu > 1$. There exist positive constants $c(\nu), c'(\nu)$ depending only on $\nu$ with $c'(\nu)\ge \min \{t\ge e: \nu\log(t) > c(\nu)\log\log(t)\}$ such that for 
$\min(d-s,s)>c'(\nu)$ and 
\[
a <\sigma \left( (\nu\log (d-s) - c(\nu)\log\log (d-s) )^{1/\nu} + (\nu\log(s) - c(\nu)\log\log(s) )^{1/\nu} \right)
\]
we have
\begin{align}\label{eq:prop:app_1}
\underset{\tilde{\eta}}{\inf}\underset{F \in \mathcal{P}_\nu}{\sup} \ \underset{\mu \in \Theta_{d} (s,a)}{\sup} \mathbf{E}_{\mu,F} |\tilde{\eta} - \eta(\mu)| 
&\ge
\underset{\tilde{\eta}}{\inf} \underset{F \in \mathcal{P}_\nu}{\sup} \ \underset{\mu \in \Theta_{d} (s,a)}{\sup}
{\mathbf P}_{\mu,F} (S_{\tilde\eta}  \ne S_{\eta(\mu)}) 
\\ &
\ge
(1-e^{-1})^2. \nonumber
\end{align}
Furthermore, if for some $A>1$ we have $\min(d/s-1,A)>c'(\nu)$ and 
$$
a < \sigma \left((\nu\log (d/s-1) - c(\nu)\log\log (d/s-1) )^{1/\nu} + (\nu\log(A) - c(\nu)\log\log(A) )^{1/\nu} \right),
$$
then
\begin{align}\label{eq1:prop:app_1}
\underset{\tilde{\eta}}{\inf} \underset{F \in \mathcal{P}_\nu}{\sup}  \ \underset{\mu \in \Theta_{d} (s,a)}{\sup} \frac{1}{s}\mathbf{E}_{\mu,F} |\tilde{\eta} - \eta(\mu)| 
	\geq \frac{1-e^{-1}}{A}.
\end{align}
\end{theorem}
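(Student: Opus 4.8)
The idea is to reduce Theorem \ref{prop:app_1} to the abstract MLR lower bounds of Section \ref{sec:minimax-and-bayes}, specifically to Corollary \ref{cor:exact_rec} for the first assertion and to Corollary \ref{cor:almost_rec} for the second one. The point is that for each fixed $F\in\mathcal P_\nu$ the model \eqref{model:log-concave} with $\mu\in\Theta_d(s,a)$ contains, as a sub-model, the two-point location model with $f_0$ the density of $\sigma\xi_1$ and $f_1$ the density of $a+\sigma\xi_1$; this two-point model has the MLR property provided $F$ has a log-concave density, and the worst case over $\mu\in\Theta_d(s,a)$ is attained at $\mu=a\,e(s)$ by the monotonicity of the Hamming risk in the signal strength. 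So the first step is to fix a convenient $F\in\mathcal P_\nu$ with a density that is log-concave (hence MLR holds), for instance the Subbotin-type density $f(u)\propto e^{-|u|^\nu/\nu}$ for $\nu\ge 1$ — this is log-concave exactly because $\nu\ge1$ — and to check it indeed lies in $\mathcal P_\nu$, which is essentially the definition of the tail bound (with the appropriate normalizing constant absorbed, or a slightly inflated constant $c(\nu)$). Taking the supremum over $F\in\mathcal P_\nu$ only makes the left-hand side larger, so it suffices to lower bound the risk for this single $F$.

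Next I would translate the quantile conditions of Corollaries \ref{cor:exact_rec} and \ref{cor:almost_rec} into the explicit condition on $a$ stated in the theorem. With $f_0$ the law of $\sigma\xi_1$ and $f_1$ the law of $a+\sigma\xi_1$, we have $F_0^{-1}(t)=\sigma F^{-1}(t)$ and $F_1^{-1}(t)=a+\sigma F^{-1}(t)$, so the condition $F_0^{-1}(1-1/(d-s))> F_1^{-1}(1/s)$ of Corollary \ref{cor:exact_rec} becomes
\[
\sigma F^{-1}\big(1-1/(d-s)\big) - \sigma F^{-1}\big(1/s\big) > a,
\]
i.e.\ $a<\sigma\big(F^{-1}(1-1/(d-s)) + (-F^{-1}(1/s))\big)$. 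The Subbotin tail bounds in the definition of $\mathcal P_\nu$ give $1-F(u)\le e^{-u^\nu/\nu}$, hence $F^{-1}(1-q)\ge (\nu\log(1/q))^{1/\nu}$ up to the lower-order correction; carrying the correction honestly is where the term $-c(\nu)\log\log$ comes in, and the constant $c(\nu)$ together with the threshold $c'(\nu)\ge\min\{t\ge e:\nu\log t>c(\nu)\log\log t\}$ is exactly what is needed to guarantee the bracketed quantities are positive and the estimate $F^{-1}(1-1/m)\ge (\nu\log m - c(\nu)\log\log m)^{1/\nu}$ holds for $m>c'(\nu)$. Applying this with $m=d-s$ and (after symmetry of the tail) $m=s$ yields precisely the stated condition on $a$ as a sufficient condition for $F_0^{-1}(1-1/(d-s))>F_1^{-1}(1/s)$; Corollary \ref{cor:exact_rec} then gives the $(1-e^{-1})^2$ lower bound in \eqref{eq:prop:app_1}, and the first inequality there is just \eqref{eq:risks}. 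The second assertion \eqref{eq1:prop:app_1} is identical in structure: the condition $F_0^{-1}(1-1/(\lfloor d/s\rfloor-1))>F_1^{-1}(1/A)$ of Corollary \ref{cor:almost_rec} translates, via the same quantile estimates with $m=\lfloor d/s\rfloor-1$ (bounded below using $d/s-1$) and $m=A$, into the displayed condition on $a$, and Corollary \ref{cor:almost_rec} delivers the bound $(1-e^{-1})/A$.

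The main obstacle is the quantile bookkeeping: one must pin down $c(\nu)$ and $c'(\nu)$ so that the inversion $1-F(u)\le e^{-u^\nu/\nu}\Rightarrow F^{-1}(1-1/m)\ge (\nu\log m - c(\nu)\log\log m)^{1/\nu}$ is valid and the right-hand side is a genuine positive real for all $m>c'(\nu)$ — this forces $\nu\log m>c(\nu)\log\log m$, which is exactly the constraint built into the definition of $c'(\nu)$, and it is the reason the hypotheses require $\min(d-s,s)>c'(\nu)$ (resp.\ $\min(d/s-1,A)>c'(\nu)$). A secondary, purely bookkeeping point is to argue that replacing the supremum over $\mu\in\Theta_d(s,a)$ and over $F\in\mathcal P_\nu$ by the single worst instance $(\mu,F)=(a\,e(s),\text{Subbotin})$ is legitimate: monotonicity of $|\widehat\eta-\eta(\mu)|$ under stochastically larger shifts handles the reduction in $\mu$ (formally, a coupling/likelihood-ratio monotonicity argument for MLR families), and the supremum over $F$ is handled trivially since we only need a lower bound from one member. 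Once these are in place, the rest is a direct substitution into the already-proved Corollaries \ref{cor:exact_rec} and \ref{cor:almost_rec}.
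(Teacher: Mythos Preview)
Your overall architecture matches the paper's: fix the Subbotin density $\bar f(x)\propto e^{-|x|^\nu/\nu}$, verify $\bar F\in\mathcal P_\nu$ and that the associated location family has the MLR property via strict log-concavity (for $\nu>1$), restrict to the subset $\Theta'_d(s,a)=\{\mu:\mu_j\in\{0,a\}\}\cap\Theta_d(s,a)$ so that the problem becomes exactly the pivotal selection problem, and then invoke Corollaries \ref{cor:exact_rec} and \ref{cor:almost_rec}. So far so good.

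There is, however, a genuine gap in your quantile step. You write that ``the Subbotin tail bounds in the definition of $\mathcal P_\nu$ give $1-F(u)\le e^{-u^\nu/\nu}$, hence $F^{-1}(1-q)\ge (\nu\log(1/q))^{1/\nu}$''. This implication goes the wrong way: an \emph{upper} bound $1-F(u)\le e^{-u^\nu/\nu}$ yields $F^{-1}(1-q)\le (\nu\log(1/q))^{1/\nu}$, not $\ge$. What you need in order to verify $F_0^{-1}(1-1/(d-s))>F_1^{-1}(1/s)$ is a \emph{lower} bound on the quantile, i.e.\ a \emph{lower} bound on the tail $1-\bar F$. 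That lower bound is not part of the definition of $\mathcal P_\nu$; it is a property of the specific Subbotin density. The paper proves $1-\bar F(v)\ge c_1(\nu)\,v^{1-\nu}e^{-v^\nu/\nu}$ for $v$ large (by the elementary identity $v^{1-\nu}e^{-v^\nu/\nu}=\int_v^\infty (1+(\nu-1)/u^\nu)e^{-u^\nu/\nu}\,du$), and it is the polynomial prefactor $v^{1-\nu}$ here that produces the $-c(\nu)\log\log$ correction after inversion. Your account attributes the correction to the upper tail bound, which cannot work.

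A minor remark on the reduction in $\mu$: you do not need any monotonicity or coupling argument (``worst case attained at $a\,e(s)$''). Since $\Theta'_d(s,a)\subset\Theta_d(s,a)$, taking the supremum over the smaller set already gives a valid lower bound; this is exactly what the paper does in \eqref{log-concave-reduction}.
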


For  $\nu \geq 1$ and $x>e$, we define the thresholding selector $\hat{\eta}_{\nu}(x)$ with components 
\begin{equation}\label{def:selector:lc}
  \hat{\eta}_{\nu,i}(x) = \mathbf{1}(X_i \geq \sigma(\nu \log(x) + \nu \log\log(x))^{1/\nu} ), \quad i=1,\dots,d.  
\end{equation}
The following theorem gives sufficient conditions for both exact and almost full recovery.
\begin{theorem}\label{prop:app_2}
Let $\nu\ge 1$. Let the sequences $(s_d,a_d)_{d\geq 1}$ be such that $d -s_d\to \infty$ and
\[
a_d \geq \sigma \left( (\nu \log(d-s_d) + \nu \log\log(d-s_d))^{1/\nu} + (\nu \log(s_d) + \nu \log\log(d-s_d))^{1/\nu} \right)
\]
for all $d$ large enough. Then the selector $\widehat\eta = \hat{\eta}_{\nu}(d-s_d)$ 
%and the scan selector $\widehat\eta =\hat{\eta}^{(s_d)}_*$ defined in  \eqref{def:scan-selector-underMLR} satisfy 
satisfies the exact recovery properties \eqref{exact-rec-lc} and  \eqref{exact-rec-lc-PWR}.

If the sequences $(s_d,a_d)_{d\geq 1}$ are such that $d/s_d\to \infty$ and
\[
a_d \geq  \sigma \left((\nu \log(d/s_d-1) + \nu \log\log(d/s_d-1))^{1/\nu} + ( \nu \log\log(d/s_d-1))^{1/\nu} \right) 
\]
for all $d$ large enough, then the selector $\widehat\eta = \hat{\eta}_{\nu}(d/s_d-1)$ 
%and the scan selector $\widehat\eta =\hat{\eta}^{(s_d)}_*$ defined in  \eqref{def:scan-selector-underMLR} satisfy 
satisfies the almost full recovery property \eqref{almost-full-rec-lc}.
\end{theorem}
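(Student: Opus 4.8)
The plan is to bound the expected Hamming risk of the thresholding selector $\hat{\eta}_{\nu}(x)$ directly by a first-moment (union) argument, splitting the loss into its false-positive and false-negative parts and estimating each through the two tail inequalities defining $\mathcal{P}_\nu$. Write $\tau_x=\sigma(\nu\log x+\nu\log\log x)^{1/\nu}$ for the threshold appearing in \eqref{def:selector:lc}, and fix $F\in\mathcal{P}_\nu$ and $\mu\in\Theta_d(s,a)$ with support $S=S_\mu$. Since $\hat{\eta}_{\nu}(x)$ acts coordinatewise,
\begin{align*}
\mathbf{E}_{\mu,F}\,|\hat{\eta}_{\nu}(x)-\eta(\mu)|
= \sum_{i\notin S}\mathbf{P}_{\mu,F}(X_i\ge\tau_x) + \sum_{i\in S}\mathbf{P}_{\mu,F}(X_i<\tau_x).
\end{align*}
For $i\notin S$ one has $X_i=\sigma\xi_i$, so each term of the first sum equals $1-F(\tau_x/\sigma)\le e^{-(\tau_x/\sigma)^\nu/\nu}=\frac{1}{x\log x}$ by the definition of $\mathcal{P}_\nu$ and the choice of $\tau_x$. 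For $i\in S$ one has $X_i\ge a+\sigma\xi_i$, hence, as soon as $0\le\tau_x\le a$, $\mathbf{P}_{\mu,F}(X_i<\tau_x)\le F\bigl((\tau_x-a)/\sigma\bigr)\le e^{-((a-\tau_x)/\sigma)^\nu/\nu}$, again by the tail bound in $\mathcal{P}_\nu$. These estimates depend on $F$ and $\mu$ only through membership in $\mathcal{P}_\nu$ and the constraint $\mu_i\ge a$ for $i\in S$, so the resulting risk bound is automatically uniform over $\mathcal{P}_\nu$ and $\Theta_d(s,a)$, and the selector itself does not involve $F$.

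For the exact-recovery statement I would take $x=d-s_d$ and $a=a_d$. The assumed lower bound on $a_d$ gives $a_d-\tau_{d-s_d}\ge\sigma(\nu\log s_d+\nu\log\log(d-s_d))^{1/\nu}$, whence $((a_d-\tau_x)/\sigma)^\nu/\nu\ge\log\bigl(s_d\log(d-s_d)\bigr)$ and each false-negative probability is at most $\frac{1}{s_d\log(d-s_d)}$. The false-positive sum is then $(d-s_d)\cdot\frac{1}{(d-s_d)\log(d-s_d)}=\frac{1}{\log(d-s_d)}$ and the false-negative sum is $s_d\cdot\frac{1}{s_d\log(d-s_d)}=\frac{1}{\log(d-s_d)}$, so the supremum of the Hamming risk over $\mathcal{P}_\nu$ and $\Theta_d(s_d,a_d)$ is at most $\frac{2}{\log(d-s_d)}\to0$ since $d-s_d\to\infty$; by \eqref{eq:risks} this gives both \eqref{exact-rec-lc} and \eqref{exact-rec-lc-PWR}. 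For the almost-full-recovery statement I would take $x=d/s_d-1$; using $d/s_d-1=(d-s_d)/s_d$, the false-positive sum becomes $(d-s_d)\cdot\frac{1}{(d/s_d-1)\log(d/s_d-1)}=\frac{s_d}{\log(d/s_d-1)}$, while the lower bound on $a_d$ now yields $a_d-\tau_x\ge\sigma(\nu\log\log(d/s_d-1))^{1/\nu}$, so each false-negative probability is at most $\frac{1}{\log(d/s_d-1)}$ and the false-negative sum is at most $\frac{s_d}{\log(d/s_d-1)}$; dividing by $s_d$ gives a bound $\frac{2}{\log(d/s_d-1)}\to0$, which is \eqref{almost-full-rec-lc}.

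The argument is essentially routine, so there is no genuine obstacle, only bookkeeping. Two points deserve attention: (i) one must check $0\le\tau_x\le a_d$ for $d$ large so that both tail estimates of $\mathcal{P}_\nu$ apply — nonnegativity holds because $x\to\infty$ forces $\log\log x\ge0$ eventually, and $\tau_x\le a_d$ is exactly what the assumed lower bound on $a_d$ was designed to deliver (the excess $a_d-\tau_x$ is nonnegative since $s_d\ge1$ and the relevant $\log\log$ term is nonnegative for $d$ large); and (ii) the $\log\log$ corrections built into $\tau_x$ must be matched precisely with those in the hypothesis on $a_d$ so that the false-positive and false-negative contributions collapse to the same $1/\log(\cdot)$ rate, which is what makes the sufficient conditions sharp against the lower bounds of Theorem \ref{prop:app_1}. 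One could alternatively route the upper bound through Proposition \ref{prop:relation-to-thresholding-lambda}, relating any thresholding selector to the scan selector, but the direct computation above is shorter and self-contained.
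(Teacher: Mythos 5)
Your proof is correct and follows essentially the same route as the paper's: decompose the Hamming risk of the thresholding selector into false-positive and false-negative sums, bound each by the two tail inequalities defining $\mathcal{P}_\nu$, and note that the assumed lower bound on $a_d$ is calibrated so that both sums collapse to $1/\log(\cdot)$. The paper's proof is just a more compact rendering of the same computation (it writes the bound directly as $s_d F(-(a_d - t)) + (d - s_d)(1 - F(t))$ and invokes $a_d - t \ge (\nu\log s_d + \nu\log\log(d-s_d))^{1/\nu}$), so there is no substantive difference.
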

Theorems \ref{prop:app_1} and \ref{prop:app_2} imply that, if $d-s_d\to \infty$, $s_d\to \infty$
the sharp phase transition for exact recovery (both in the sense of the expected Hamming risk and of the probability of wrong recovery) occurs at $$a_d^{*} = \sigma\left((\nu \log(d-s_d))^{1/\nu}+(\nu \log(s_d))^{1/\nu}\right).$$
For almost full recovery, the sharp phase transition occurs at $a_d^{*} = \sigma(\nu \log(d/s_d))^{1/\nu} $ as long as $d/s_d \to \infty$. Interestingly, the phase transition for almost full recovery turns out to be the same as obtained in \cite{abraham2021sharp} for the multiple testing risk criterion.

If we assume in addition that all $F$ in $\mathcal{P}_\nu$ are absolutely continuous with respect to the Lebesgue measure then the results of Theorem \ref{prop:app_2} hold true for the scan selector $\widehat\eta =\hat{\eta}^{(s_d)}_*$ defined in  \eqref{def:scan-selector-underMLR}. Indeed, in that case $X_1,\dots,X_d$ are $\mathbf{P}_{\mu,F}$-a.s. distinct for any $\mu\in\Theta_{d}(s,a)$ and the same conclusions as in Theorem \ref{prop:app_2} for
 the scan selector follow by applying Proposition \ref{prop:relation-to-thresholding-lambda} and recalling that   \eqref{def:selector:lc} is a thresholding selector.

\section{Grouped variable selection}\label{sec:grouped}
\setcounter{equation}{0}

In this section, we apply the results of Section \ref{sec:minimax-and-bayes} to derive necessary and sufficient conditions of exact and almost full recovery in a model with group sparsity.
%The necessary conditions will be based on the results obtained above. The sufficient conditions will be obtained by an explicit selector construction.
We consider the following Gaussian matrix model:
\begin{equation}\label{model}
Y_{ij}=\theta_{ij} + \sigma \xi_{ij}, \quad \text{for }i=1,\dots,k, \text{ and } j=1,\dots,d,
\end{equation}
where the random variables $ \xi_{ij}$ are i.i.d. with standard normal distribution and $\sigma > 0$.

The mean parameter $\theta \in \R^{k \times d}$ is assumed to belong to the class 
\begin{eqnarray*}
  \Theta_{k,d} (s,a) &=& \{ \theta \in \R^{k \times d}: \text{there exists } S \subset \{1,\dots,d\} \text { with } |S| = s, \\
  &&\| \theta _{j}\| \geq a \text { for } j \in S  \text { and } \theta_{j} =0 \text { for } j \not \in S\},
\end{eqnarray*}
where $s,k,d$ are positive integers, $s < d$, $a>0$ is a real number, $\| \cdot\|$ is the Euclidean norm, and let $\theta_j\in \mathbf{R}^k $ be the $j$th column vector of matrix $\theta$. %, that is, the vector in $$ with components $(\theta_{1j},\dots,\theta_{kj})^\top$. 
Analogously, we denote by $Y_j$ and $\xi_j$ the $j$th column vectors of matrices $(Y_{ij})$ and $( \xi_{ij})$, respectively. 
The class $\Theta_{k,d} (s,a)$ contains all matrices, for which $s$ columns have Euclidean norm at least $a$, and the remaining $d-s$ columns are null vectors.
Here, parameter $s$ plays the role of sparsity.

For each $\theta\in\Theta_{k,d} (s,a)$, we set $S(\theta) =  \{ j \in \{1,\dots,d\}:  \theta_{j} \not =0 \}$ and define the vector $\eta(\theta) = (\eta_1(\theta), \ldots, \eta_d(\theta))$, where $\eta_j(\theta) = \mathbf{1} ( j \in S(\theta))$.
 In this section, we study the problem of selecting the set $S(\theta)$ of non-null columns of $\theta$ based on the observations $(Y_{ij})$. 
        A column selector $\widehat \eta = (\widehat \eta_1, \ldots, \widehat \eta_d)$ is any measurable function taking values in $\{0,1\}^d$.
We evaluate the performance of $\widehat \eta$ by the Hamming distance $|\widehat \eta  - \eta(\theta)| = \sum_{j=1}^d |\widehat \eta_j - \eta_j(\theta)|$. 

Similarly to the setting considered in the previous sections, for a sequence $(s_d,k_d,a_d)_{d \geq 1}$ we will say that a selector $\widehat \eta$ achieves exact recovery for $\Theta_{k_d,d}(s_d,a_d)$  if 
\begin{align}\label{exact-rec}
\limsup_{d\to \infty} \sup_{\theta \in \Theta_{k_d,d} (s_d,a_d)} \, {\mathbf E}_\theta |\widehat \eta - \eta(\theta)| =0
\end{align}
 and it achieves almost full recovery for $\Theta_{k_d,d}(s_d,a_d)$ 
\begin{align}\label{almost-full-rec}
\limsup_{d \to \infty} \sup_{\theta \in \Theta_{k_d,d} (s_d,a_d)} \, \frac{1}{s_d} {\mathbf E}_\theta |\widehat \eta - \eta(\theta)| =0,
\end{align}
where ${\mathbf E}_\theta$ denotes the expectation with respect to the probability measure ${\mathbf P}_\theta$ of the observations  $(Y_{ij})$ satisfying \eqref{model}. As above, we also consider the definition of exact recovery in a weaker form:
\begin{align}\label{exact-rec-PWR}
\limsup_{d\to \infty} \sup_{\theta \in \Theta_{k_d,d} (s_d,a_d)} \, {\mathbf P}_\theta (S_{\widehat \eta} \ne S_{\eta(\theta)}) =0.
\end{align}
The main question we are addressing below is to find in what range of values of $a_d$ the exact recovery and the almost full recovery are possible.

%%%%%%%%%%%%%%%%%%%%%
\subsection{Reduction by rotation invariance}
%%%%%%%%%%%%%%%%%%%%%%

 We first show that, based on rotation invariance, we can reduce the group variable selection problem stated above to a simpler problem.
 Indeed, the next lemma shows that it suffices to consider selectors depending only on the norms $\|Y_{j}\| $ of the observed random vectors $Y_{j}$, $j=1,\dots,d$.

\begin{lemma}\label{lem:reduction} For any selector $\hat \eta ( Y_1,\dots,Y_d) = (\hat \eta_1( Y_1,\dots,Y_d), \dots, \hat \eta_d( Y_1,\dots,Y_d))$ there exists a randomized selector $\overline \eta( \|Y_1\|,\dots,\|Y_d\| )$ such that
\begin{align}\label{eq:lem:reduction1}
\sup_{\theta \in \Theta_{k,d} (s,a)} {\mathbf E}_\theta |\hat \eta - \eta (\theta)| \geq 
\sup_{\theta \in \Theta_{k,d} (s,a)} {\mathbf E}_\theta |\overline \eta - \eta (\theta)| 
\end{align}
and 
\begin{align}\label{eq:lem:reduction2}
\sup_{\theta \in \Theta_{k,d} (s,a)} {\mathbf P}_\theta (S_{\widehat \eta} \ne S_{\eta(\theta)}) \geq 
\sup_{\theta \in \Theta_{k,d} (s,a)} {\mathbf P}_\theta (S_{\overline \eta} \ne S_{\eta(\theta)}). 
\end{align}
\end{lemma}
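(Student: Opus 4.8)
The plan is to exploit the rotational symmetry of the Gaussian noise together with the fact that the parameter class $\Theta_{k,d}(s,a)$ constrains only the column norms $\|\theta_j\|$. The point is that rotating the $j$th column $Y_j$ by an orthogonal matrix $U_j\in O(k)$ transports $\mathbf{P}_\theta$ to $\mathbf{P}_{\theta'}$, where $\theta'_j=U_j\theta_j$; since $\|\theta'_j\|=\|\theta_j\|$ we still have $\theta'\in\Theta_{k,d}(s,a)$ and $\eta(\theta')=\eta(\theta)$. Hence averaging a selector over column-wise random rotations cannot increase its worst-case risk, while at the same time it erases all directional information, leaving only the norms $\|Y_j\|$.

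Concretely, I would let $U_1,\dots,U_d$ be i.i.d.\ Haar-distributed on $O(k)$, independent of $(Y_1,\dots,Y_d)$, and set $\tilde\eta:=\hat\eta(U_1Y_1,\dots,U_dY_d)$. The first step is to observe that, conditionally on $(Y_1,\dots,Y_d)$, the vector $(U_1Y_1,\dots,U_dY_d)$ has a law depending on the data only through $(\|Y_1\|,\dots,\|Y_d\|)$, namely the product of the uniform distributions on the spheres of radii $\|Y_j\|$ (this uses the invariance of the uniform law on a sphere under orthogonal maps). Consequently, if $V_1,\dots,V_d$ are i.i.d.\ uniform on the unit sphere $S^{k-1}$, independent of the data, and $\zeta=(V_1,\dots,V_d)$, then the selector
$$
\overline\eta\big(\|Y_1\|,\dots,\|Y_d\|,\zeta\big):=\hat\eta\big(\|Y_1\|V_1,\dots,\|Y_d\|V_d\big)
$$
is a genuine randomized function of the norms, and jointly with $(Y_1,\dots,Y_d)$ it has, under every $\mathbf{P}_\theta$, the same distribution as $\big((Y_1,\dots,Y_d),\tilde\eta\big)$. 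In particular $\mathbf{E}_\theta|\overline\eta-\eta(\theta)|=\mathbf{E}_\theta|\tilde\eta-\eta(\theta)|$ and $\mathbf{P}_\theta(S_{\overline\eta}\ne S_{\eta(\theta)})=\mathbf{P}_\theta(S_{\tilde\eta}\ne S_{\eta(\theta)})$ for all $\theta$.

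It then remains to bound the risk of $\tilde\eta$. Conditioning on $(U_1,\dots,U_d)=(u_1,\dots,u_d)$ and using that $(u_1Y_1,\dots,u_dY_d)$ under $\mathbf{P}_\theta$ has law $\mathbf{P}_{\theta^{(u)}}$ with $\theta^{(u)}_j=u_j\theta_j$, together with $\eta(\theta^{(u)})=\eta(\theta)$, exhibits $\mathbf{E}_\theta|\tilde\eta-\eta(\theta)|$ as an average over $u$ of $\mathbf{E}_{\theta^{(u)}}|\hat\eta-\eta(\theta^{(u)})|$; since $\theta^{(u)}\in\Theta_{k,d}(s,a)$ for every choice of orthogonal $u_j$, this gives
$$
\mathbf{E}_\theta|\tilde\eta-\eta(\theta)|\ \le\ \sup_{\vartheta\in\Theta_{k,d}(s,a)}\mathbf{E}_\vartheta|\hat\eta-\eta(\vartheta)|,
$$
and taking the supremum over $\theta$ yields \eqref{eq:lem:reduction1}. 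Replacing the loss $|\hat\eta-\eta|$ by the indicator $\mathbf{1}(|\hat\eta-\eta|\ge1)=\mathbf{1}(S_{\hat\eta}\ne S_{\eta})$ (cf.\ \eqref{eq:risks}) and running the identical argument yields \eqref{eq:lem:reduction2}.

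I expect the only delicate point to be the transfer step in the second paragraph: checking carefully that $\tilde\eta$ can be re-expressed as a randomized \emph{measurable} function of $(\|Y_1\|,\dots,\|Y_d\|)$ alone, i.e.\ that its conditional law given the data factors through the norms. This is a standard invariance-of-Haar-measure fact, but it must be stated precisely; everything else is bookkeeping with Fubini and the rotation-invariance of $N(0,\sigma^2 I_k)$.
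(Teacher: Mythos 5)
Your proposal is correct and mirrors the paper's own proof: both introduce independent columnwise Haar rotations, re-express $\hat\eta(U_1Y_1,\dots,U_dY_d)$ as a randomized selector $\overline\eta$ depending only on the norms (the paper writes $O_j\mathrm{e}_1\|Y_j\|$ where you write $\|Y_j\|V_j$, an equivalent parametrization of the uniform law on the sphere), and then bound its risk by shifting the rotation onto $\theta$ via the rotation invariance of Gaussian noise and the fact that $\Theta_{k,d}(s,a)$ and $\eta(\cdot)$ depend on $\theta$ only through the column norms. The only cosmetic difference is that the paper passes through $\sup_o\sup_\theta=\sup_\theta$ while you bound the average over $u$ directly by $\sup_\vartheta$, which is the same inequality.
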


Set $X_j = \|Y_j\|^2$, $j=1,\dots,d$. It follows from Lemma \ref{lem:reduction} that it suffices to consider the simplified model where one observes independent random variables $X_j$, $j=1,\dots,d$, such that 
$s$ among them (with $j\in S(\theta)$) have  non-central $\chi^2$-distributions with $k$ degrees of freedom  and non-centrality parameters $\|\theta_j\|\ge a$, while the remaining $X_j$ (for $j\not \in S(\theta)$) have a central $\chi^2$-distribution with $k$ degrees of freedom. In the sequel, we assume this model.

%%%%%%%%%%%%%%%%%%%%%%
\subsection{Lower bound for the minimax risk %Necessary conditions
}
%%%%%%%%%%%%%%%%%%%%%%

The following theorem provides lower bounds on the minimax risk in the group variable setting implying the  necessary conditions of exact and almost full recovery.
\begin{theorem}\label{th:lower-bound:group} There exist positive absolute constants $c_1$
	 and $c_2$ such that the following holds. If $d-s\ge c_2$ and
\[
a^{2} < c_1\sigma^2 (\log(d-s) \vee \sqrt{k\log(d-s)})
\]
then 
\begin{align}\label{eq:exact-rec-lower}
 \inf_{\tilde\eta} \sup_{\theta \in \Theta_{k,d} (s,a)} \, {\mathbf E}_\theta |\tilde \eta - \eta(\theta)|\ge 
 \inf_{\tilde\eta} \sup_{\theta \in \Theta_{k,d} (s,a)} \, {\mathbf P}_\theta (S_{\tilde \eta} \ne S_{\eta(\theta)})\ge (1-e^{-1})^2.
\end{align}
If $d/s\ge c_2$ and 
\[
a^{2} < c_1\sigma^2(\log((d-s)/s) \vee \sqrt{k\log((d-s)/s)})
\]
then
\begin{align}\label{eq:almost-full-rec-lower}
\inf_{\tilde\eta} \sup_{\theta \in \Theta_{k,d} (s,a)} \,  \frac1s{\mathbf E}_\theta |\tilde \eta - \eta(\theta)|\ge \frac{ 1-e^{-1}}{2}. 
\end{align}
\end{theorem}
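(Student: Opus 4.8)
## Proof Proposal for Theorem \ref{th:lower-bound:group}

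The plan is to reduce the group selection problem to the pivotal selection problem already analyzed in Section~\ref{sec:minimax-and-bayes} and then invoke the lower bounds obtained there, in particular Corollary~\ref{cor:exact_rec} and Proposition~\ref{proposition:lower:sparse:1}. By the rotation-invariance reduction (Lemma~\ref{lem:reduction}) and the discussion following it, we may work with the simplified model in which $X_j=\|Y_j\|^2$, so that $s$ of the $X_j$ (those with $j\in S(\theta)$) follow a noncentral $\chi^2_k$ law with noncentrality parameter $\|\theta_j\|^2\ge a^2$, while the remaining $d-s$ follow a central $\chi^2_k$ law. Since the noncentral $\chi^2_k$ density with noncentrality $\lambda$ has monotone likelihood ratio in the observation (it is an exponential family mixture, and the likelihood ratio against the central $\chi^2_k$ is increasing), the MLR property holds; moreover both densities are absolutely continuous with respect to Lebesgue measure, so \eqref{ass:distinct} is satisfied. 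Taking $f_1$ to be the noncentral $\chi^2_k$ density with noncentrality exactly $a^2$ (the least favorable value inside $\Theta_{k,d}(s,a)$) and $f_0$ the central $\chi^2_k$ density, the minimax risk over $\Theta_{k,d}(s,a)$ is at least the minimax risk in the pivotal problem with this pair $(f_0,f_1)$.

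For the exact-recovery bound \eqref{eq:exact-rec-lower}, I would apply Corollary~\ref{cor:exact_rec}: it suffices to verify that $F_0^{-1}(1-1/(d-s)) > F_1^{-1}(1/s)$, where $F_0,F_1$ are the central and noncentral $\chi^2_k$ c.d.f.'s, and then \eqref{eq:cor:exact_rec1} gives the bound $(1-e^{-1})^2$. Concretely, one shows that the $(1-1/(d-s))$-quantile of the central $\chi^2_k$ is of order $k + \log(d-s) + \sqrt{k\log(d-s)}$ (using standard $\chi^2$ tail bounds, e.g. Laurent–Massart), while the $(1/s)$-lower-quantile of the noncentral $\chi^2_k$ with noncentrality $a^2$ is at most of order $k + a^2 - C(\sqrt{(k+a^2)\log s})$ or so. Comparing these two expressions, the required strict inequality holds provided $a^2$ is smaller than a constant times $\log(d-s)\vee\sqrt{k\log(d-s)}$, which is exactly the hypothesis; this is where the constants $c_1,c_2$ are produced. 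For the almost-full-recovery bound \eqref{eq:almost-full-rec-lower}, I would instead invoke Proposition~\ref{proposition:lower:sparse:1} (or Corollary~\ref{cor:almost_rec} with $A=2$): one needs $F_0^{-1}(1-1/(\lfloor d/s\rfloor-1)) > F_1^{-1}(1/2)$, i.e. the central $\chi^2_k$ quantile at level $1-1/(\lfloor d/s\rfloor-1)$, of order $k+\log((d-s)/s)+\sqrt{k\log((d-s)/s)}$, must exceed the median-type quantity $k+a^2+O(\sqrt{k})$ of the noncentral law; this again holds under the stated condition on $a^2$, and Corollary~\ref{cor:almost_rec} then yields the constant $(1-e^{-1})/2$.

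The main technical obstacle is the quantitative comparison of $\chi^2_k$ quantiles in the two regimes $\log(d-s)\ge\sqrt{k\log(d-s)}$ (equivalently $\log(d-s)\gtrsim k$) and $\log(d-s)\le\sqrt{k\log(d-s)}$ (i.e. $k\gtrsim\log(d-s)$): the tail behavior of $\chi^2_k$ transitions between a Gaussian-type $\sqrt{k\log(d-s)}$ deviation and a Poissonian $\log(d-s)$ deviation, which is precisely why the threshold has the $\vee$ form $\log(d-s)\vee\sqrt{k\log(d-s)}$. I would handle this by using a uniform two-sided bound such as: for all $t>0$, the central $\chi^2_k$ satisfies $\mathbf{P}(\chi^2_k \ge k + 2\sqrt{kt} + 2t)\le e^{-t}$ and a matching lower bound on $\mathbf{P}(\chi^2_k \ge k + c(\sqrt{kt}\vee t))\ge e^{-t}$ for a small constant $c$, together with the analogous estimate for the noncentral law controlling its lower quantile via $\mathbf{P}_{f_1}(X \le k + a^2 - c'\sqrt{(k+a^2)\log s}) \le 1/s$. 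Choosing $t=\log(d-s)$ and assembling these inequalities, the quantile gap is bounded below by a constant multiple of $\log(d-s)\vee\sqrt{k\log(d-s)} - a^2$, which is positive under the hypothesis; the remaining bookkeeping to pin down $c_1,c_2$ is routine. The almost-full-recovery case is identical after replacing $d-s$ by $(d-s)/s$ (or $\lfloor d/s\rfloor-1$, which is comparable when $d/s\ge c_2$).
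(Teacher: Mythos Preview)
Your approach is exactly that of the paper: restrict to the subset $\Theta'_{k,d}(s,a)=\{\theta\in\Theta_{k,d}(s,a):\|\theta_j\|\in\{0,a\}\}$, pass (via Lemma~\ref{lem:reduction}) to the pivotal problem for $X_j=\|Y_j\|^2$ with $f_0,f_1$ the central and noncentral $\chi^2_k$ densities, note MLR, and invoke Corollary~\ref{cor:exact_rec} for \eqref{eq:exact-rec-lower} and Corollary~\ref{cor:almost_rec} with $A=2$ for \eqref{eq:almost-full-rec-lower}. The paper also uses precisely the ``matching lower bound'' you anticipate, namely \eqref{eq:chi2-ZZ}, to lower bound the central $\chi^2_k$ quantile.

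One slip to fix: the inequality $\mathbf{P}_{f_1}(X \le k + a^2 - c'\sqrt{(k+a^2)\log s}) \le 1/s$ that you propose would give a \emph{lower} bound on $F_1^{-1}(1/s)$, not the upper bound needed for the comparison $F_0^{-1}(1-1/(d-s))>F_1^{-1}(1/s)$. The paper avoids this by a much cruder (and simpler) route: applying the upper-tail bound \eqref{eq:chi2-Birge} with $x=\log\bigl(s/(s-1)\bigr)\le 1$ gives
\[
F_1^{-1}(1/s)\ \le\ k+a^2+2\sqrt{k+2a^2}+2\ \le\ k+2a^2+2\sqrt{k}+4,
\]
a bound independent of $s$. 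Combined with $F_0^{-1}(1-1/(d-s))\ge k+c'\bigl(\log(d-s)\vee\sqrt{k\log(d-s)}\bigr)$ from \eqref{eq:chi2-ZZ}, the $2\sqrt{k}+4$ term is absorbed once $d-s\ge c_2$, and the comparison follows. The almost-full-recovery case is handled identically with $F_1^{-1}(1/2)$ in place of $F_1^{-1}(1/s)$.
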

The proof of this theorem is given in the Appendix. It is based on  considering the smaller parameter set 
$$\Theta'_{k,d} (s,a)=%\cap 
\Big\{\theta\in\Theta_{k,d} (s,a) :\,\|\theta_j\|\in \{0,a\}, j=1,\dots,d\Big\}
$$
instead of $\Theta_{k,d} (s,a)$. If $\theta$ belongs to $\Theta'_{k,d} (s,a)$  then $s$ among the observations $X_j=\|Y_j\|^2$ are distributed with density $f_1$ of non-central $\chi^2_k$-distribution with non-centrality parameter $a$ while the remaining $d-s$ observations have  a central $\chi^2_k$-distribution with density denoted by $f_0$.
Thus, we have a special case of pivotal selection problem considered in Section \ref{sec:minimax-and-bayes} and we apply Corollaries \ref{cor:almost_rec} and \ref{cor:exact_rec} to obtain the lower bounds. 

%%%%%%%%%%%%%%%%%%%%%
\subsection{Optimal selector and sufficient conditions of recovery%upper bound for the risk
}
%%%%%%%%%%%%%%%%%%%%%%

Intuitively, it is clear that one should select the components $j$ such that the value $X_j=\|Y_j\|^2$ is large enough. The same conclusion can be obtained by analyzing the corresponding pivotal selection problem with parameter set $\Theta'_{k,d} (s,a)$. Recall that the best separable selector for the pivotal problem has the form \eqref{eq:separable}, where $f_0$ and $f_1$ are now the central and non-central $\chi^2_k$ densities, respectively. Comparing the explicit expressions for these densities (cf. equations \eqref{f0} and \eqref{f1} in Appendix \ref{appendix:B}) we see that the  likelihood ratio $f_1(z)/f_0(z)$ is monotone in $z$. Therefore, the best separable selector \eqref{eq:separable} can be written in the form
\begin{equation}\label{opt-selector}
 \widehat \eta_j = \mathbf{1} ( X_j \geq t_d), \quad  j=1,\dots,d,   
\end{equation}
with an appropriate definition of threshold $t_d>0$. The next theorem shows that such a selector, as well as the scan selector \eqref{def:scan-selector-underMLR} achieve almost full and exact recovery under the conditions that asymptotically match the lower bounds of Theorem \ref{th:lower-bound:group}.

\begin{theorem}\label{th:upper-bound:group}
Let the sequence $(s_d,k_d, a_d)_{d\geq 1}$ be such that
\[
a_d^2 \geq \sigma^2(16\sqrt{k_d\log(d)} + 80 \log(d))
\]
for all $d$ large enough.
Then the selector \eqref{opt-selector} with $t_d= \sigma^2(k_d + 4 \log(d) + 4 \sqrt{k_d\log(d)})$ and the scan selector $\widehat\eta =\hat{\eta}^{(s_d)}_*$ defined in  \eqref{def:scan-selector-underMLR}
satisfy the exact recovery properties \eqref{exact-rec} and  \eqref{exact-rec-PWR}. 

 Let the sequences $(s_d,k_d, a_d)_{d\geq 1}$ be such that $d/s_d\to \infty$ and
\[
a_d^2 \geq \sigma^2(16\sqrt{k_d\log(d/s_d)} + 80 \log(d/s_d))
\]
then the selector \eqref{opt-selector} with $t_d= \sigma^2(k_d + 4 \log(d/s_d) + 4 \sqrt{k_d\log(d/s_d)})$ and the scan selector $\widehat\eta =\hat{\eta}^{(s_d)}_*$ defined in  \eqref{def:scan-selector-underMLR}
satisfy the almost full recovery property \eqref{almost-full-rec}.
\end{theorem}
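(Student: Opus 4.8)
The plan is to analyze the thresholding selector $\widehat\eta_j = \mathbf{1}(X_j \ge t_d)$ directly, where $X_j = \|Y_j\|^2$ follows a (possibly non-central) $\chi^2_{k_d}$ distribution, and then transfer the conclusion to the scan selector $\hat\eta^{(s_d)}_*$ via Proposition \ref{prop:relation-to-thresholding-lambda}. First, I would bound the expected Hamming risk of the thresholding selector by the sum of two error terms,
\begin{align*}
\sup_{\theta\in\Theta_{k_d,d}(s_d,a_d)}\mathbf{E}_\theta|\widehat\eta-\eta(\theta)|
\le s_d\,\sup_{\|\theta_j\|\ge a_d}\mathbf{P}(X_j < t_d) + (d-s_d)\,\mathbf{P}(X_j \ge t_d \mid \theta_j = 0),
\end{align*}
the first being the probability that a non-central $\chi^2_{k_d}$ with non-centrality $\|\theta_j\|^2\ge a_d^2$ falls below the threshold, and the second the probability that a central $\chi^2_{k_d}$ exceeds it. The key inputs are standard one-sided concentration bounds for $\chi^2$ variables (Laurent–Massart type): for a central $\chi^2_k$, $\mathbf{P}(\chi^2_k \ge k + 2\sqrt{kx} + 2x)\le e^{-x}$, and an analogous lower-tail bound for the non-central case, $\mathbf{P}(\chi^2_k(\lambda) \le k + \lambda - 2\sqrt{(k+2\lambda)x})\le e^{-x}$ for $x>0$.

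For the exact-recovery part, I would set $x = \log d$ so that $t_d = \sigma^2(k_d + 4\log d + 4\sqrt{k_d\log d})$ matches the upper-tail bound with room to spare: taking $x$ a bit smaller than $2\log d$ inside the bound $k+2\sqrt{kx}+2x$ still keeps $t_d$ above it, giving $(d-s_d)\mathbf{P}(X_j\ge t_d\mid \theta_j=0)\le (d-s_d)\,d^{-2}\to 0$. For the first term, using the lower-tail bound with $\lambda = a_d^2/\sigma^2 \ge 16\sqrt{k_d\log d} + 80\log d$, I would check that $k_d + \lambda - 2\sqrt{(k_d+2\lambda)\log d}$ comfortably exceeds $k_d + 4\log d + 4\sqrt{k_d\log d}$ (this is where the constants $16$ and $80$ are calibrated — the slack absorbs the cross terms $\sqrt{\lambda\log d}$), so that $s_d\mathbf{P}(X_j<t_d\mid \|\theta_j\|\ge a_d)\le s_d\,d^{-1}\le 1/1$… more precisely $s_d \cdot d^{-1} \le 1 \to$ we need it to vanish, which it does since $s_d < d$ and the exponent can be taken strictly larger than $1$ by the slack. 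The same computation with $\log d$ replaced by $\log(d/s_d)$ and the summation normalized by $1/s_d$ gives almost full recovery: the central-tail term becomes $(d-s_d)s_d^{-1}(d/s_d)^{-2}\to 0$ and the non-central term becomes $\mathbf{P}(X_j<t_d\mid \|\theta_j\|\ge a_d)\to 0$ uniformly.

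Finally, to pass from the thresholding selector to the scan selector, I would invoke Proposition \ref{prop:relation-to-thresholding-lambda}: since $F$ is absolutely continuous, the $X_j = \|Y_j\|^2$ are $\mathbf{P}_\theta$-a.s. distinct, and with $\lambda = t_d$ we get $|\hat\eta^{(s_d)}_* - \eta(\theta)| \le 2|\bar\eta_{t_d} - \eta(\theta)|$ pointwise, hence $\mathbf{E}_\theta|\hat\eta^{(s_d)}_* - \eta(\theta)| \le 2\,\mathbf{E}_\theta|\bar\eta_{t_d} - \eta(\theta)|$, and both recovery statements transfer with the factor $2$ harmlessly absorbed into the limit. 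I expect the main obstacle to be the bookkeeping in the non-central lower-tail estimate: one must verify that the assumed lower bound on $a_d^2$ (with its specific constants) indeed forces $k_d + \lambda - 2\sqrt{(k_d+2\lambda)x} \ge t_d/\sigma^2$ for \emph{all} admissible pairs $(k_d, \lambda)$ — in particular uniformly over the regimes $k_d\log d \gtrless (\log d)^2$, which is exactly why the threshold and the signal condition are written with both a $\sqrt{k_d\log d}$ term and a $\log d$ term. Everything else is routine.
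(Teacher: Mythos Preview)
Your approach is correct but differs from the paper's in one substantive way. You handle the signal term by applying a lower-tail bound for the \emph{non-central} $\chi^2$ directly, namely $\mathbf{P}(\chi^2_k(\lambda)\le k+\lambda-2\sqrt{(k+2\lambda)x})\le e^{-x}$, and then check that the assumed lower bound on $a_d^2$ gives enough slack to push $x$ strictly above $\log d$ (respectively $\log(d/s_d)$). The paper instead avoids any non-central lower-tail inequality: it expands $\|\theta_i+\xi_i\|^2=\|\theta_i\|^2+2\langle\theta_i,\xi_i\rangle+\|\xi_i\|^2$, splits on the event $\{\langle\theta_i,\xi_i\rangle/\|\theta_i\|<a/4\}$, and thereby reduces the signal term to a \emph{central} $\chi^2$ lower tail (bounded via \eqref{eq:chi2-LM}) plus a one-dimensional Gaussian tail $\mathbf{P}(\mathcal N\ge a/4)$. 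This yields three terms rather than two and uses only the inequalities already collected in Lemma~\ref{lem:chi2}. Your route is more direct and makes the role of the two regimes $\sqrt{k_d\log d}$ versus $\log d$ slightly more transparent, at the cost of importing one additional (standard) concentration bound not stated in the paper. The transfer to the scan selector via Proposition~\ref{prop:relation-to-thresholding-lambda} and the almost-full-recovery variant are handled identically in both arguments.
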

Combining Theorems \ref{th:lower-bound:group} and \ref{th:upper-bound:group} we conclude that the phase transition occurs at the critical threshold $a^{*}_d  \asymp  \sigma \max((\log d)^{1/2}, (k_d\log d)^{1/4}) $ for exact recovery, and at $a^{*}_d \asymp  \sigma \max((\log (d/s_d))^{1/2}, (k_d\log  (d/s_d))^{1/4})$
for almost full recovery as long as $d/s_d \to \infty$. The results hold with no restriction on the values $k_d$ that can vary in an arbitrary way as $d$ increases. 

Note that the selector \eqref{opt-selector} can be restated as a group Lasso selector for our model. For general linear regression models, group Lasso selection of sparsity pattern was considered in \cite{lounici2010oracle}. Being specified to the case of our model, the condition of exact recovery from \cite{lounici2010oracle} requires  $a_d$ to be at least of the order $\max((\log d)^{1/2}, k_d^{1/2})$,  which is suboptimal in view of the above results.

%%%%%%%%%%%%%%%%%%%%%%%

 \section*{Acknowledgement}
The work of Cristina Butucea and Alexandre Tsybakov was supported by the French National Research Agency (ANR) under the grant Labex Ecodec (ANR-11-LABEX-0047). The work of Mohamed Ndaoud is supported by a Chair of
Excellence in Data Science granted by the CY Initiative.
\bibliographystyle{alpha}
%\bibliography{ref.bib}

\appendix
\section{Proofs for Section \ref{sec:minimax-and-bayes}}\label{appendix:A}
\renewcommand{\theequation}{\thesection.\arabic{equation}}
\setcounter{equation}{0}

%\noindent\textbf{Proof of Proposition \ref{prop:lower-bound-s=1}.}

\noindent\textbf{Proof of Proposition \ref{proposition:lower:sparse:1}.}
We partition the set $\{1,\dots,s\lfloor d/s\rfloor\}$ into $s$ equal blocks $B_{1},\dots,B_{s}$, each of size $\lfloor d/s\rfloor$. Denote by $\eta_{B_{i}}$ and $\tilde{\eta}_{B_{i}}$ the restrictions of $\eta$ and $\tilde{\eta}$ to $B_i$. Let $\eta_{B_{i}}$ be distributed according to the uniform prior $\pi_{B_{i}}$ on $B_{i}$, that is, only one entry of $\eta_{B_{i}}$ is equal to 1 and each entry has equal probability to be drawn. Thus, $|\eta_{B_{i}}|_{0}=1$.  We consider the product prior $\pi=\prod_{i=1}^s \pi_{B_{i}}$ on $(\eta_1, \dots, \eta_{s\lfloor d/s\rfloor})$.  In words, the prior is such that each block contains exactly one non-zero entry, which is sampled uniformly in the block, and all blocks are independent. We have
$$
\underset{\tilde{\eta}}{\inf} \underset{|\eta|_{0} = s}{\sup} \mathbf{E}_{\eta} |\tilde{\eta} - \eta|
\geq 
\underset{\tilde{\eta}}{\inf}\,\mathbb{E}_{\pi}\mathbf{E}_{\eta}\sum_{i=1}^{s}|\tilde{\eta}_{B_{i}} - \eta_{B_{i}}|
\geq 
\sum_{i=1}^{s}\underset{\tilde{\eta}_{B_{i}}}{\inf}\,\mathbb{E}_{\pi}\mathbf{E}_{\eta}|\tilde{\eta}_{B_{i}} - \eta_{B_{i}}|.
$$
Note that here $\tilde{\eta}_{B_{i}}$ is a function of all $X_1,\dots,X_d$. 
Since the blocks are independent, we get using conditional means and Jensen's inequality that for any $\tilde{\eta}_{B_{i}}$ there exists $\hat{\eta}_{B_{i}}=\hat{\eta}_{B_{i}}(X_{B_i})$ measurable with respect to only $X_{B_i}:=(X_j: j\in B_i)$
such that $\mathbb{E}_{\pi}\mathbf{E}_{\eta}|\tilde{\eta}_{B_{i}} - \eta_{B_{i}}|\ge \mathbb{E}_{\pi}\mathbf{E}_{\eta}|\hat{\eta}_{B_{i}}(X_{B_i}) - \eta_{B_{i}}|$. Therefore,
$$
\underset{\tilde{\eta}}{\inf} \underset{|\eta|_{0} = s}{\sup} \mathbf{E}_{\eta} |\tilde{\eta} - \eta| \geq \sum_{i=1}^{s}\underset{\hat{\eta}_{B_{i}}}{\inf}\,\mathbb{E}_{\pi}\mathbf{E}_{\eta}|\hat{\eta}_{B_{i}}(X_{B_i}) - \eta_{B_{i}}|,
$$
where the infima are now taken over selectors depending only on observations in the corresponding block. Applying Proposition \ref{prop:lower-bound-s=1} for each block completes the proof. 

%\noindent\textbf{Proof of Theorem \ref{thm:lower:sparse:two}.}
 %\begin{proof}[]
 %In order to prove Theorem \ref{th:1/2:general}, need the following lemma. 
 
We now come to the proof of Proposition \ref{proposition:lower:sparse:2}. 
 As mentioned after the statement of the proposition we can assume that $L_1,\dots,L_d$ are some deterministic values and that $\eta^{**}$ is a deterministic vector with  components
chosen according to \eqref{eta-star-star-L}. For the proof of the proposition we will need the following lemma.

\begin{lemma}\label{lemma:lower:sparse}
Let $s\ge2$. Then
\begin{align}\label{eq1:lemma:lower:sparse}
   \sum_{S:|S|=s, j \not \in S}\prod_{k \in S} L_k \ %\frac{f_{1}}{f_{0}}(X_{k})   
   > \
   L_{(s),-j} %\frac{f_{1}}{f_{0}}(X_{(s),-j})
   \sum_{S:|S|=s-1, j \not \in S}\prod_{k \in S} L_k %\frac{f_{1}}{f_{0}}(X_{k}) 
\end{align}
for $j= 1,\dots,d,$ where $L_{(s),-j}$ denotes the $s$-th largest value in the collection $(L_{i}: i \neq j)$.
\end{lemma}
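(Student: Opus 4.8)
Fix $j\in\{1,\dots,d\}$ and relabel the $d-1$ numbers $(L_i)_{i\ne j}$ in decreasing order as $b_1\ge b_2\ge\cdots\ge b_{d-1}\ge 0$, so that $L_{(s),-j}=b_s$ and, for every integer $r$, $\sum_{S:|S|=r,\ j\notin S}\prod_{k\in S}L_k=e_r(b_1,\dots,b_{d-1})$ is the $r$-th elementary symmetric polynomial of the $b$'s. Thus \eqref{eq1:lemma:lower:sparse} is equivalent to the purely algebraic inequality
\[
e_s(b_1,\dots,b_{d-1})\;>\;b_s\,e_{s-1}(b_1,\dots,b_{d-1}).
\]
The plan is to prove this by a peeling argument based on the standard recursion $e_r(c_1,\dots,c_m)=e_r(c_1,\dots,c_{m-1})+c_m\,e_{r-1}(c_1,\dots,c_{m-1})$, combined with an induction on $s$.

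Applying the recursion to the coordinate $b_s$ and cancelling yields the clean identity
\[
e_s(b)-b_s\,e_{s-1}(b)=e_s(b^{\flat})-b_s^{\,2}\,e_{s-2}(b^{\flat}),
\]
where $b^{\flat}=(b_1,\dots,b_{s-1},b_{s+1},\dots,b_{d-1})$ is $b$ with its $s$-th entry removed; note that the first $s-1$ entries of $b^{\flat}$ are $\ge b_s$ and the remaining $d-s-1$ entries are $\le b_s$. I would then split each symmetric function over $b^{\flat}$ according to how many of the ``large'' coordinates $b_1,\dots,b_{s-1}$ a subset uses, i.e.\ $e_r(b^{\flat})=\sum_{m}e_m(b_1,\dots,b_{s-1})\,e_{r-m}(b_{s+1},\dots,b_{d-1})$, which reduces the inequality to a sum over $m$ of terms pairing $e_m(b_1,\dots,b_{s-1})$ with a difference of the form $e_{s-m}(b_{s+1},\dots,b_{d-1})-b_s^{\,2}\,e_{s-m-2}(b_{s+1},\dots,b_{d-1})$ of symmetric functions of the small coordinates. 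The extreme terms (almost all the large coordinates used) are immediate, and for the intermediate ones I would invoke the induction hypothesis --- the same inequality at a smaller value of $s$, applied to the vector of small coordinates --- after absorbing the factor $b_s^{\,2}$. The base case $s=2$ reduces to examining $e_2(b)-b_2\,e_1(b)=\sum_{i<k,\ i,k\ne 2}b_ib_k-b_2^{\,2}$.

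I expect the main obstacle to be precisely this last counting step. A term-by-term comparison is not available: the obvious injection $T\mapsto T\cup\{i\}$ sending an $(s-1)$-subset to an $s$-subset with $b_i\ge b_s$ cannot exist, since all $(s-1)$-subsets of $\{1,\dots,s\}$ would be forced onto the single $s$-set $\{1,\dots,s\}$. Hence the inequality can hold only through a cancellation in which the $s$-subsets built almost entirely out of the large coordinates are outweighed by the many $s$-subsets that use several of the small coordinates $b_{s+1},\dots,b_{d-1}$. Making this cancellation quantitative --- equivalently, showing that in the representation obtained by distributing each term of $b_s\,e_{s-1}(b)$ uniformly over its admissible one-element extensions $T\cup\{i\}$ with $i\le s$, the signed total coefficient of the family of $s$-subsets with a given overlap with $\{1,\dots,s\}$ stays under control --- is the technical heart of the argument.
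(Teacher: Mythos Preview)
Your proposal cannot be completed, because the inequality \eqref{eq1:lemma:lower:sparse} is \emph{false} in general. Your own base-case formula already reveals this: you correctly reduce $s=2$ to
\[
e_2(b)-b_2\,e_1(b)=\sum_{\substack{i<k\\ i,k\ne 2}}b_ib_k-b_2^{\,2},
\]
but you never check its sign. With $d-1=4$ and $b=(1,1,\varepsilon,\varepsilon)$ (small $\varepsilon>0$) the right-hand side is $2\varepsilon+\varepsilon^2-1<0$; equivalently $e_2(b)=1+4\varepsilon+\varepsilon^2$ while $b_2\,e_1(b)=2+2\varepsilon$, so $e_2(b)<b_2\,e_1(b)$. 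The same failure occurs for larger $s$: with $s=3$ and $b=(3,2,1,0.1)$ one gets $e_3(b)=7.1$ but $b_3\,e_2(b)=11.6$. Distinctness of the $L_i$'s does not help. Your peeling identity $e_s(b)-b_s e_{s-1}(b)=e_s(b^{\flat})-b_s^{\,2}e_{s-2}(b^{\flat})$ is correct, but the ``main obstacle'' you flag in the counting step is not a technical hurdle---it is a genuine obstruction, and no induction can close it.

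For comparison, the paper's argument builds a map $\tilde S\mapsto\tilde S\cup\{i(\tilde S)\}$ with $i(\tilde S)\in\hat S_j\setminus\tilde S$ (so $L_{i(\tilde S)}\ge L_{(s),-j}$), observes that its image is strictly contained in the $s$-subsets avoiding $j$, and from this alone deduces the strict inequality. But the map is not injective: for $d-1=4$, $s=3$, $\hat S_j=\{1,2,3\}$ and $j^*=1$, the $(s-1)$-sets $\{1,2\}$, $\{1,3\}$, $\{2,3\}$ all land on $\{1,2,3\}$. Without injectivity the strict-subset observation does not imply $\sum_{|S|=s}\prod_{k\in S}L_k>\sum_{|\tilde S|=s-1}L_{i(\tilde S)}\prod_{k\in\tilde S}L_k$, and the counterexamples above confirm the conclusion fails. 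So both your approach and the paper's share the same underlying gap; you simply arrive at it by a different route.
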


 \noindent\textbf{Proof of Lemma \ref{lemma:lower:sparse}.}
Let $j$ be in $\{1,\dots,d\}$ and let $\hat{S}_{j}$ be the set of indices of the  $s$ largest values in the collection $(L_{i}: i \neq j)$ where perhaps only a subset of indices $i$ with $L_i = L_{(s),-j}$ is put into $\hat{S}_{j}$. Choose a fixed element $j^*$ of $\hat{S}_{j}$. Obviously, for any set $\tilde S\subset \{1,\dots,d\}$  of size $s-1$, 
the intersection between $\tilde S^{c}$ and $\hat{S}_{j}$ is non-empty. For each $\tilde{S}$, we choose an element $i(\tilde{S})$ of this intersection.  If $j^*$ is not an element of $\tilde{S}$ we put $i(\tilde{S})= j^*$. We have that $i(\tilde{S})\ne j$ and
$$
\{ S = \tilde S \cup \{i(\tilde S)\}: |\tilde S|=s-1 , j \not \in \tilde S  \} \subsetneqq  \{ S: |S|=s, j \not \in S\}.
$$
Note that the inclusion is strict because the set on the left hand side does not contain sets that do not have $j^*$ as an element.
It follows that
\begin{align*}
  \sum_{S:|S|=s, j \not \in S}\prod_{k \in S}L_{k}\
  & >
  \sum_{\tilde{S}:|\tilde{S}|=s-1, j \not \in \tilde{S}}L_{i(\tilde{S})}\prod_{k \in \tilde{S}}L_{k} \\
  &
  \ge
  L_{(s),-j}\sum_{\tilde{S}:|\tilde{S}|=s-1, j \not \in \tilde{S}}\prod_{k \in \tilde{S}}L_{k}, 
\end{align*}
which shows the claim of the lemma.

%\textcolor{red}{
%The following part of the proof can be omitted: Indeed, due to the fact that each element of $\{L_i:i\in \hat{S}_j\}$ is not smaller than $L_{(s),-j}$ we have
%\begin{align}\label{eq:lem5}
%  L_{i(\tilde{S})}
%  &
%  \ge L_{(s),-j},\quad \forall \tilde{S}:|\tilde{S}|=s-1, j \not \in \tilde{S}. 
%\end{align}
%Moreover, if  the values $L_1,\dots,L_d$ are distinct then 
%the inequality in \eqref{eq:lem5} is strict for the set  $\tilde{S}$, which is the set of indices of the $s-1$ smallest values among $(L_i: i\in \hat{S}_j)$. Indeed, in this case, $L_{i(\tilde{S})}= \max_{i\in \hat{S}_j} L_i$ and hence  $L_{i(\tilde{S})}>L_{(s),-j}$. 
%}

\noindent\textbf{Proof of Proposition \ref{proposition:lower:sparse:2}.}
To prove the first assertion of the proposition, we assume w.l.o.g that $X_{2}$ is selected and we show that if $L_1\ge L_2$ then $X_{1}$ is also selected. Since $X_{2}$ is selected we have
\begin{equation}\label{star}
L_{2}  \sum_{|S|=s-1, 2 \not \in S}\prod_{k \in S}L_{k}  \geq  \sum_{|S|=s, 2 \not \in S}\prod_{k \in S}L_{k}     .
\end{equation}
Set
$$T_{1,2} = \sum_{|S|=s, 1 \in S, 2 \in S}\prod_{k \in S}L_{k},
$$
$$T_{-1,2} = \sum_{|S|=s, 1 \not \in S, 2 \in S}\prod_{k \in S}L_{k},
$$
$$
T_{1,-2} = \sum_{|S|=s, 1 \in S, 2 \not \in S}\prod_{k \in S}L_{k},
$$
$$
T_{-1,-2} = \sum_{|S|=s, 1 \not \in S, 2 \not \in S}\prod_{k \in S}L_{k}.
$$
Then we can write \eqref{star} in the form
$$
T_{1,2} + T_{-1,2} \geq T_{1,-2} + T_{-1,-2}.
$$
Since $L_{1}\geq L_{2}$ we have %due to the MLR property that $\frac{f_{1}}{f_{0}}(X_{1}) \geq \frac{f_{1}}{f_{0}}(X_{2})$ and thus 
$
T_{-1,2} \leq T_{1,-2}.
$
It follows that
$$
T_{1,2} + T_{1,-2} \geq T_{-1,2} + T_{-1,-2},
$$
so that $X_1$ is selected. This proves  the first assertion of the proposition.

Due to Lemma \ref{lemma:lower:sparse}, the bound \eqref{eq1:lemma:lower:sparse} holds with strict inequality. This fact  and the definition of $\eta^{**}$, cf. \eqref{eta-star-star-L}, imply that if $\eta^{**}_{j} =1$ then
$
L_{j}  >
L_{(s),-j}$. On the other hand, $L_{(s),-j} \geq L_{(s+1)}$. It follows that $j \in \{i: L_i\in \Ls\}$.
%\simo{Here we need distinct values because being strictly larger that the $s+1$ order statistic does not imply belonging to $\Ls$. It only means that the value is amongst the $s$ largest ones. We can make all proofs work if we replace the definition of $\Ls$ by the set of values of $L_i$ larger than $L_{(s)}$ and in that case it could be a set of size larger than $s$. I this stage I would suggest to keep the hypothesis of distinct values.}
Thus, \eqref{eq:proposition:lower:sparse:2} is proved. 

%\noindent\textbf{Proof of Proposition \ref{lem:scan}.}

\noindent\textbf{Proof of Corollary \ref{cor:exact_rec}.}
We need to prove only \eqref{eq:cor:exact_rec1} since \eqref{eq:cor:exact_rec} follows from Theorem \ref{th:lower-both-risks} and the MLR property.
If $F^{-1}_{0}(1-1/(d-s)) > F^{-1}_{1}(1/s)$ we get from \eqref{eq:cor:exact_rec} that
\begin{align*}
&\underset{\tilde{\eta}}{\inf} \underset{|\eta|_{0} = s}{\sup} 
{\mathbf P}_\eta(S_{\tilde\eta}  \ne S_\eta) %\mathbf{E}_{\eta} |\tilde{\eta} - \eta| 
\\
&\quad \geq \mathbf{P}_{e(s)}\Big( \underset{j=1,\dots,s}{\min}\,X_{j}  \leq F^{-1}_{1}(1/s) \Big)\mathbf{P}_{e(s)}\Big( \underset{j=s+1,\dots,d}{\max}\,X_{j}  \geq F^{-1}_{0}(1-1/(d-s)) \Big).
\end{align*}
Here,
\begin{align*}
	\mathbf{P}_{e(s)}\Big( F^{-1}_{0}(1-1/(d-s) \leq \underset{j=s+1,\dots,d}{\max}\,X_{j}  \Big) &= 1 - [F_{0}(F^{-1}_{0}(1-1/(d-s))]^{d-s}\\
	& =1 - (1-1/(d-s))^{d-s} \geq 1-e^{-1},
\end{align*}
and
\begin{align*}
\mathbf{P}_{e(s)}\Big(   \underset{j=1,\dots,s}{\min}\,X_{j}  \leq F^{-1}_{1}(1/s) \Big) &= 1 - (1-F_{1}(F^{-1}_{1}(1/s))^{s} 
\\
&= 1 - (1-1/s)^{s} \geq 1-e^{-1}.
\end{align*}
We conclude that under the assumption $F^{-1}_{0}(1-1/(d-s)) > F^{-1}_{1}(1/s)$ we have
\eqref{eq:cor:exact_rec1}
and thus the exact recovery is impossible.

\noindent\textbf{Proof of Proposition \ref{prop:relation-to-thresholding-lambda}.}
Set for brevity  $S^*=S_{\eta}$, $\hat S= S_{\hat{\eta}^{(s)}_*}=\{i: X_i\in \Xs\}$, and $\tilde S = S_{\bar{\eta}_\lambda}= \{i: X_i> \lambda\}$. Since $X_i$'s are distinct the set $\hat S$ has cardinality $s$. Thus, $S^*$ and $\hat S$ being of the same cardinality we have $|S^*\setminus \hat S|=|\hat S\setminus S^*|$, and
$$
|\hat{\eta}^{(s)}_* - \eta|= |S^*\setminus \hat S|
+|\hat S\setminus S^*|=2|S^*\setminus \hat S| = 2|\hat S\setminus S^*|.
$$
On the other hand, 
$$
|\bar{\eta}_\lambda - \eta|= |S^*\setminus \tilde S|+
|\tilde S\setminus S^*|.
$$
We consider now the two cases. First, if $\lambda \ge X_{(s)}$ then $\tilde S\subseteq \hat S$, so that 
\begin{align*}
 |\bar{\eta}_\lambda - \eta|\ge |S^*\setminus \tilde S| &\ge   |S^*\setminus \hat S|=\frac12|\hat{\eta}^{(s)}_* - \eta|.
\end{align*}
Second, if $\lambda < X_{(s)}$ then $\hat S\subseteq \tilde S$, and we have
\begin{align*}
 |\bar{\eta}_\lambda - \eta|\ge |\tilde S\setminus S^*| \ge   |\hat S\setminus S^*|=\frac12|\hat{\eta}^{(s)}_* - \eta|.
\end{align*}

\noindent\textbf{Proof of Proposition \ref{lem:comparison}.}
We prove only \eqref{eq2:lem:comparison} since \eqref{eq1:lem:comparison} is an easy consequence of the definitions. 
We first prove the right hand inequality in \eqref{eq2:lem:comparison}. Observe that
$$
\Psi(t_{1}) = 2(s-1)F_{1}(t_{1}) = 2(d-s)(1-F_{0}(t_{1})).
$$
If $t_{1}\geq t_{2}$ then
$$
\Psi(t_{1}) \leq 2(d-s)(1-F_{0}(t_{2})) \leq 2\Psi(t_{2}),
$$
and if $t_{1}< t_{2}$ then
$$
\Psi(t_{1}) < 2(s-1)F_{1}(t_{2}) \leq 2\Psi(t_{2}).
$$
We now prove the left hand inequality in \eqref{eq2:lem:comparison}. We have
$$
d-s - \Psi(x) = (d-s)F_{0}(x)-(s-1)F_{1}(x).
$$
Hence
$$
d-s - \Psi(t_{2}) = \int_{-\infty}^{t_{2}} [(d-s)f_{0}(u)-(s-1)f_{1}(u)]du.
$$
Using the definition of $t_{2}$ and the MLR property it is not hard to check  that
$$
d-s - \Psi(t_{2}) = \underset{t\in \mathbf{R}}{\sup} \int_{-\infty}^{t} [(d-s)f_{0}(u)-(s-1)f_{1}(u)]du. %\underset{A}{\sup} \int_{A}(d-s)f_{0}(u)-(s-1)f_{1}(u)du.
$$
Hence,
$$
d-s - \Psi(t_{2}) \geq d-s - \Psi(t_{1}),
$$
which completes the proof.

\noindent\textbf{Proof of Theorem \ref{theorem3}.}
Let  $X_{(s),-1}$ the $s$-largest component of $(X_2,\dots,X_d)$. Under the assumptions of Theorem \ref{theorem3} the values $X_1,\dots,X_d$ are distinct  $\mathbf{P}_{\eta}$-a.s. for any $\eta$. Therefore, the condition $X_{1} \leq X_{(s),-1}$ is a.s. equivalent to $X_{1} < X_{(s),-1}$, which implies that $X_1\not\in \Xs$.  Thus, $\mathbf{P}_{e(s)}\left( X_1\not\in \Xs \right)\ge \mathbf{P}_{e(s)}\left( X_{1} \leq X_{(s),-1} \right)$. 
	Combining this inequality with Theorem \ref{prop:lower:quantile}  and Proposition \ref{lem:scan} we get
	$$
	\underset{\tilde{\eta}}{\inf} \underset{|\eta|_{0} = s}{\sup} \mathbf{E}_{\eta} |\tilde{\eta} - \eta| \geq s\mathbf{P}_{e(s)}\left( X_{1} \leq X_{(s),-1} \right) \ge s\mathbf{P}_{e(s)}\left( X_{(s),-1} \ge t_1 \right)\mathbf{P}_{e(s)}\left( X_{1} \leq t_{1}\right).
	$$
	We now invoke the following lemma.
	\begin{lemma}\label{lem:quantile}
		Under the assumptions of Theorem \ref{theorem3}
		$$
		\mathbf{P}_{e(s)}\left( X_{(s),-1} \geq t_{1} \right) \geq \frac{1}{10}.
		$$
	\end{lemma}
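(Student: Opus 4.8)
The idea is to rewrite the event $\{X_{(s),-1}\ge t_1\}$ as $\{N-M\ge 1\}$ for two independent binomial random variables $N,M$ with the same mean $m\ge 12$, and then to control $D:=N-M$ via Berry--Esseen. First I would unpack the event. Under $\mathbf{P}_{e(s)}$ the variables $X_1,\dots,X_s$ are i.i.d.\ with density $f_1$, the variables $X_{s+1},\dots,X_d$ are i.i.d.\ with density $f_0$, and all $d$ of them are independent. Since $X_{(s),-1}$ is the $s$-th largest of the $d-1$ numbers $X_2,\dots,X_d$, the event $\{X_{(s),-1}\ge t_1\}$ coincides with the event that at least $s$ of $X_2,\dots,X_d$ are $\ge t_1$. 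Setting
$$
N:=\sum_{i=s+1}^{d}\mathbf{1}(X_i\ge t_1),\qquad M:=\sum_{i=2}^{s}\mathbf{1}(X_i< t_1),
$$
the number of the $X_2,\dots,X_d$ that are $\ge t_1$ equals $N+(s-1)-M$, hence $\{X_{(s),-1}\ge t_1\}=\{N-M\ge 1\}$. Moreover $N$ and $M$ are independent, $N\sim\mathrm{Bin}\bigl(d-s,\,1-F_0(t_1)\bigr)$ and $M\sim\mathrm{Bin}\bigl(s-1,\,F_1(t_1)\bigr)$, because $F_0,F_1$ are continuous (c.d.f.'s of Lebesgue densities), so a single observation with density $f_0$ is $\ge t_1$ with probability $1-F_0(t_1)$ and a single observation with density $f_1$ is $<t_1$ with probability $F_1(t_1)$.

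Next I would record the two facts that drive the proof. By the defining equation $(s-1)F_1(t_1)=(d-s)\bigl(1-F_0(t_1)\bigr)$ of $t_1$, the variables $N$ and $M$ have the common mean $m:=(d-s)\bigl(1-F_0(t_1)\bigr)=(s-1)F_1(t_1)$, and since $2m=\Psi(t_1)\ge 24$ we get $m\ge 12$. Putting $D:=N-M$, so $\mathbf{E}[D]=0$, one computes
$$
\mathrm{Var}(D)=\mathrm{Var}(N)+\mathrm{Var}(M)=mF_0(t_1)+m\bigl(1-F_1(t_1)\bigr)=m\bigl(1+F_0(t_1)-F_1(t_1)\bigr),
$$
and here the MLR property is used decisively: it implies $F_1\le F_0$ pointwise (monotone likelihood ratio order implies first-order stochastic dominance), whence $\mathrm{Var}(D)\ge m\ge 12$.

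To conclude, I would apply the Berry--Esseen inequality to $D$, which is a centered sum of $d-1$ independent random variables $Z_i$, each of the form $\pm(\text{Bernoulli}-\text{its mean})$ and hence bounded by $1$ in absolute value, so that $\sum_i\mathbf{E}|Z_i|^3\le\sum_i\mathbf{E}[Z_i^2]=\mathrm{Var}(D)=:\sigma^2$ with $\sigma\ge\sqrt{12}$. With $\Phi$ the standard normal c.d.f.\ and the Berry--Esseen constant $C_0<1/2$ (see \cite{shevtsova}),
$$
\sup_{x}\bigl|\mathbf{P}(D/\sigma\le x)-\Phi(x)\bigr|\le\frac{C_0}{\sigma^3}\sum_i\mathbf{E}|Z_i|^3\le\frac{C_0}{\sigma}\le\frac{C_0}{\sqrt{12}},
$$
so $\mathbf{P}(D\le 0)\le\Phi(0)+C_0/\sqrt{12}=\tfrac12+C_0/\sqrt{12}$, and therefore
$$
\mathbf{P}_{e(s)}\bigl(X_{(s),-1}\ge t_1\bigr)=\mathbf{P}(D\ge 1)=1-\mathbf{P}(D\le 0)\ \ge\ \frac12-\frac{C_0}{\sqrt{12}}\ >\ \frac12-\frac{1}{2\sqrt{12}}\ >\ \frac1{10}.
$$

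The main obstacle, modest as it is, is the variance lower bound $\mathrm{Var}(D)\ge m$: a priori one of $\mathrm{Var}(N),\mathrm{Var}(M)$ could be tiny (if the success probability of the corresponding binomial were close to $1$), making $D$ almost deterministic and Berry--Esseen useless, and it is precisely the MLR property, through $F_1(t_1)\le F_0(t_1)$, that prevents this. Everything else is routine bookkeeping. One could alternatively use a local (mode) bound on $N$ in place of Berry--Esseen, but since $m$ need not be an integer the accompanying median argument becomes a little awkward, so I would favour the route above.
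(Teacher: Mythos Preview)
Your proof is correct and follows essentially the same route as the paper's: rewrite $\{X_{(s),-1}\ge t_1\}$ as the event that a centered sum of independent Bernoulli-type variables is $\ge 1$, bound the variance below, and apply Berry--Esseen. The decomposition $D=N-M$ is just a repackaging of the paper's $\sum_{i=2}^d\mathbf{1}\{X_i\ge t_1\}-(s-1)$.

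The one genuine difference is the variance lower bound. The paper writes $V=\tfrac{\Psi(t_1)}{2}\bigl(2-\Psi(t_1)\tfrac{d-1}{2(s-1)(d-s)}\bigr)$ and uses the sparsity hypothesis $s\le(d+2)/3$ of Theorem~\ref{theorem3} to deduce $V\ge\Psi(t_1)/4\ge 6$. You instead invoke the MLR property (also a hypothesis of Theorem~\ref{theorem3}) to get $F_1(t_1)\le F_0(t_1)$ directly, yielding the sharper $V\ge m=\Psi(t_1)/2\ge 12$. This is a nice observation: it shows that the condition $s\le(d+2)/3$ is in fact not needed for Lemma~\ref{lem:quantile}.

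One small slip: the Berry--Esseen constant for non-identically distributed summands in \cite{shevtsova} is $0.56$, not $<1/2$; the paper uses $0.56$ explicitly. This does not affect your conclusion, since with $C_0=0.56$ and $\sigma\ge\sqrt{12}$ one still has $\tfrac12-0.56/\sqrt{12}\approx 0.338>1/10$.
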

	%Using Lemma \ref{lem:quantile}, we get moreover that
	It follows that
	$$
	\underset{\tilde{\eta}}{\inf} \underset{|\eta|_{0} = s}{\sup} \mathbf{E}_{\eta} |\tilde{\eta} - \eta| \geq \frac{s}{10}\mathbf{P}_{e(s)}\left( X_{1} \leq t_{1}\right)\geq\frac{s-1}{10}F_{1}(t_{1}).
	$$
	By the definition of $t_{1}$ we have $\Psi(t_{1})=2(s-1)F_{1}(t_{1})$. Thus,
	$$
	\underset{\tilde{\eta}}{\inf} \underset{|\eta|_{0} = s}{\sup} \mathbf{E}_{\eta} |\tilde{\eta} - \eta|\geq\frac{1}{20}\Psi(t_{1}).
	$$
	We conclude using Proposition \ref{lem:comparison}.

	\noindent\textbf{Proof of Lemma \ref{lem:quantile}.}
	We use the following fact:
	$$
	\mathbf{P}_{e(s)}\left( X_{(s),-1} \geq t_{1} \right)  = \mathbf{P}_{e(s)}\left( \sum_{i=2}^{d}\mathbf{1}{\{X_{i} \geq t_{1}\}} \geq s\right).
	$$
	Under $\mathbf{P}_{e(s)}$, exactly $s-1$ random variables among $X_{2},\dots,X_{d}$ are distributed with density $f_{1}$ and the remaining $d-s$ have density $f_0$.  
	Then 
	$$
	\mathbf{E}_{e(s)}\left( \sum_{i=2}^{d}\mathbf{1}{\{X_{i} \geq t_{1}\}} \right) = (s-1)(1-F_{1}(t_{1})) + (d-s)(1-F_{0}(t_{1}))= s-1.
	$$
	It follows that
	$$
	\mathbf{P}_{e(s)}\left( X_{(s),-1} \geq t_{1} \right) = \mathbf{P}_{e(s)}\left( \sum_{i=2}^{d}\mathbf{1}{\{X_{i} \geq t_{1}\}} -\mathbf{E}_{e(s)}\left( \sum_{i=2}^{d}\mathbf{1}{\{X_{i} \geq t_{1}\}} \right) \geq 1\right).
	$$
	Next, the variance under $\mathbf{P}_{e(s)}$ is
	\[
	V:= \V_{e(s)} \left( \sum_{i=2}^{d}\mathbf{1}{\{X_{i} \geq t_{1}\}}  \right) = (s-1)F_{1}(t_1)(1-F_{1}(t_1)) + (d-s)F_{0}(t_1)(1-F_{0}(t_1)).
	\]
	Using the fact that $\Psi(t_1) = 2(s-1)F_{1}(t_1) = 2(d-s)(1-F_{0}(t_1))$ we get 
	\begin{align*}
	V &= \frac{\Psi(t_1)}{2}(1 - F_{1}(t_1) +  F_{0}(t_1)) \\
	&= \frac{\Psi(t_1)}{2}\left(2 - \Psi(t_1)\frac{d-1}{2(s-1)(d-s)}\right).
	\end{align*}
	{Since $\Psi(t_1) \leq 2(s-1)$, we have
		\[
		V \geq \frac{\Psi(t_1)}{2} \left(2 - \frac{d-1}{d-s} \right) \geq \frac{\Psi(t_1)}{4}
		\]
	}
	%for some $\kappa$ in (0,1), if $s \leq ((1-\kappa)d + 1)/(2-\kappa)$.
	{if $s \leq (d + 2)/3$.}
	%How does this change the Berry-Esseen below?}
	Using the Berry–Esseen theorem in  \cite{shevtsova} 
	%Shevtsova, Irina (2010). "An Improvement of Convergence Rate Estimates in the Lyapunov Theorem". Doklady Mathematics. 82 (3): 862–864
	we obtain
	\[
	\mathbf{P}_{e(s)}\left( X_{(s),-1} \geq t_{1} \right) \geq \mathbf{P}(\mathcal{N}(0,1) \geq {2/\sqrt{\Psi(t_1)}}) - 0.56\frac{W}{V^{3/2}},
	\]
	where 
	\[
	W =  \sum_{i=2}^{d}\mathbf{E}_{e(s)}\left(|
	\mathbf{1} {\{X_{i} \geq t_{1}\}} - \mathbf{P}_{e(s)}(X_i \geq t_{1})|^3 \right) \leq V. 
	\]
	{Since $\Psi(t_1) \geq 24$ we have $V\ge6$,} and hence 
	$$
	\mathbf{P}_{e(s)}\left( \sum_{i=2}^{d}\mathbf{1}{\{X_{i} \geq t_{1}\}} \geq s\right) \geq 
	{\mathbf{P}(\mathcal{N}(0,1) \geq 1/\sqrt{6}) - 0.56/\sqrt{6}}  \ge \frac{1}{10},
	$$
	{where we have observed that $\mathbf{P}(\mathcal{N}(0,1) \geq 1/\sqrt{6})\geq 0.34$ and $0.56/\sqrt{6}\leq 0.23$. }

\section{Proofs for Sections \ref{sec:log-concave} and \ref{sec:grouped}}\label{appendix:B}
\setcounter{equation}{0}

\noindent\textbf{Proof of Theorem \ref{prop:app_1}.}
Consider the following subset of $\Theta_{d} (s,a)$:
$$\Theta'_{d} (s,a)=\Theta_{d} (s,a)\cap \Big\{\mu \in \mathbb{R}^d:\, \mu_j \in \{0,a\}, j=1,\dots,d\Big\}.
$$
Fix $\nu > 1$ and denote by $\bar F$ the Subbotin c.d.f. with density $\bar f=\bar F'$ such that
$$
\bar f(x)=\frac{\nu^{1-1/\nu}}{2\Gamma(1/\nu)} e^{-|x|^\nu/\nu}, \quad x\in \mathbf{R}.
$$
Note that $\bar F$ belongs to $\mathcal{P}_\nu$. Indeed, since $\nu\ge 1$ we have $
\bar f(x)\le \frac{1}{2} e^{-|x|^\nu/\nu},
$
while $\int_u^\infty e^{-x^\nu/\nu} dx \le  u^{1-\nu} e^{-u^\nu/\nu} $ for any $u>0$.  Also, $\bar F (u)\ge  \frac{1}{2}$ for any $u\ge 0$. It follows that $1-\bar F (u)\le  \frac{1}{2} \min(1, u^{1-\nu}e^{-u^\nu/\nu})\le e^{-u^\nu/\nu}$ for any $u\ge 0$. 

Next, we have
\begin{align}\nonumber
\underset{\tilde{\eta}}{\inf}\underset{F \in \mathcal{P}_\nu}{\sup} \ \underset{\mu \in \Theta_{d} (s,a)}{\sup} \mathbf{E}_{\mu,F} |\tilde{\eta} - \eta(\mu)| 
&\ge
\underset{\tilde{\eta}}{\inf}  \ \underset{\mu \in \Theta'_{d} (s,a)}{\sup}
\mathbf{E}_{\mu,\bar F} |\tilde{\eta} - \eta(\mu)| 
\\ &
\ge       \label{log-concave-reduction}
\underset{\tilde{\eta}}{\inf} \ \underset{\mu \in \Theta'_{d} (s,a)}{\sup} {\mathbf P}_{\mu,\bar F} (S_{\tilde\eta}  \ne S_{\eta(\mu)}).
\end{align}
Set $f_0(\cdot)=\bar f(\cdot/\sigma)/\sigma$. Note that the expressions in \eqref{log-concave-reduction} containing $\underset{\mu \in \Theta'_{d} (s,a)}{\sup}$ are the minimax risks for the pivotal selection problem with densities $f_0(\cdot)$ and $f_1(\cdot)=f_0(\cdot-a)$ supported on $\mathbf{R}$. Since for $\nu > 1$ the function $\log f_0(\cdot)$ is strictly concave it follows that $f_1/f_0(\cdot)$ is monotone increasing, cf., e.g., \cite[Proposition 2.3]{saumard-wellner}\footnote{Proposition 2.3 in \cite{saumard-wellner} states that log-concavity for a location family implies monotone non-decreasing likelihood ratio. By a trivial modification of its proof, strict log-concavity implies monotone increasing likelihood ratio.}. 
Thus, we can apply the results of Section \ref{sec:minimax-and-bayes} for the pivotal selection problem under the MLR property, namely, Corollaries \ref{cor:almost_rec} and \ref{cor:exact_rec}.

%Namely, we use Corollary \ref{cor:exact_rec} to obtain a condition on $a$, under which exact recovery is impossible,
% and Corollary \ref{cor:almost_rec} to obtain the corresponding result for almost full recovery. 
 
 In view of Corollary \ref{cor:exact_rec}, for exact recovery is enough to check that the condition 
 \[
 a <\sigma \left( (\nu\log (d-s) - c(\nu)\log\log (d-s) )^{1/\nu} + (\nu\log s - c(\nu)\log\log s )^{1/\nu} \right)
 \]
implies that $F^{-1}_{0}(1-1/(d-s)) > F^{-1}_{1}(1/s)$
  or, equivalently, 
  \begin{equation*}
  	a < \sigma ({\bar F}^{-1}(1 - 1/(d-s)) +  {\bar F}^{-1}(1-1/s)),
  \end{equation*}
  where we used the facts that $F^{-1}_1(\cdot) = a + F_0^{-1}(\cdot)$,  $ F_0^{-1}(\cdot)=\sigma{\bar F}^{-1}(\cdot) $ and, by the symmetry of the Subbotin distribution, $- {\bar F}^{-1}(1/s)= {\bar F}^{-1}(1-1/s)$.  Thus, to prove \eqref{eq:prop:app_1} it suffices to show that the bound 
  \begin{equation*}%\label{eq:subbotin-lower}
  {\bar F}^{-1}(1 - 1/t) \ge (\nu\log(t) - c(\nu)\log\log(t) )^{1/\nu} = : u^*
  \end{equation*}
  holds if $t>0$ is large enough and to apply it for $t=d-s$ and $t=s$. Note that there exists a constant $c_1(\nu)>0$ depending only on $\nu$ such that $1-  {\bar F}(v)\ge c_1(\nu) v^{1-\nu}e^{-v^\nu/\nu}$ for all $v>0$ large enough. Indeed, it is enough to observe that for $v \geq 1$ we have
  \[ v^{1-\nu} e^{-v^\nu /\nu} = \int_v^\infty (1+(\nu -1)/u^\nu)e^{-u^\nu/\nu} \mathrm{d}u \leq 2\nu^{1/\nu} \Gamma(1/\nu)(1-  {\bar F}(v)).
  \]
  Given this property, it is not hard to check that $1-  {\bar F}(u^*)\ge 1/t$ if $c(\nu)>0$ is large enough. This completes the proof of \eqref{eq:prop:app_1}.

The proof for \eqref{eq1:prop:app_1} (almost full recovery) uses Corollary \ref{cor:almost_rec}, according to which it is enough to check that condition $F^{-1}_{0}(1-1/(\lfloor d/s\rfloor-1)) > F^{-1}_{1}(1/A)$ is satisfied under the given assumption on $a$. This is done by the same argument as above with the only difference that $t=d-s$ and $t=s$ are replaced by $t=\lfloor d/s\rfloor-1$ and $t=1/A$, respectively.

\noindent\textbf{Proof of Theorem \ref{prop:app_2}.}
%\textcolor{blue}{
It is enough to prove the exact recovery and almost full recovery properties \eqref{exact-rec-lc}  and \eqref{almost-full-rec-lc} 
%
%for the selectors $\widehat\eta=\hat{\eta}_{\nu}(d-s_d)$ and $\widehat\eta=\hat{\eta}_{\nu}(d/s_d-1)$, respectively. Indeed,  this implies the same properties 
%for the scan selector $\widehat\eta =\hat{\eta}^{(s_d)}_*$ in view of Proposition \ref{prop:relation-to-thresholding-lambda} and the fact that $\hat{\eta}_{\nu}(d-s_d)$ and $\hat{\eta}_{\nu}(d/s_d-1)$ are thresholding selectors. Moreover, 
since
\eqref{exact-rec-lc-PWR} is an immediate consequence of \eqref{exact-rec-lc} in view of  \eqref{eq:risks}.
%}

Let $F$ be in $\mathcal{P}_\nu$ and  $\mu \in \Theta_{d}(s_d,a_d)$. Without loss of generality assume that $\sigma=1$. We first prove the exact recovery  property  \eqref{exact-rec-lc} for the selector $\widehat\eta=\hat{\eta}_{\nu}(d-s_d)$. Set $t=(\nu \log(d-s_d) + \nu \log\log(d-s_d))^{1/\nu}$. Then we have
\[
\mathbf{E}_{\mu,F} |\hat{\eta}_{\nu}(d-s_d) - \eta(\mu)| \leq s_d  F(- (a_d-t)) + (d-s_d)(1-F(t)).
\]
Since $a_d - t \geq (\nu \log(s_d) + \nu \log\log(d-s_d))^{1/\nu}$ and $F\in \mathcal{P}_\nu$ we get further that
\[
\mathbf{E}_{\mu,F} |\hat{\eta}_{\nu}(d-s_d) - \eta(\mu)| \leq \frac{2}{\log(d-s_d)} \underset{d \to \infty}{\to} 0.
\]
The proof of almost full recovery  property  \eqref{almost-full-rec-lc} for the selector $\widehat\eta=\hat{\eta}_{\nu}(d/s_d-1)$ follows along the same lines by setting $t=\sigma(\nu \log(d/s_d-1) + \nu \log\log(d/s_d-1))^{1/\nu}$. 
This yields
\[
\frac{1}{s_d}\mathbf{E}_{\mu,F} |\hat{\eta}_{\nu}(d/s_d-1) - \eta(\mu)| \leq \frac{2}{\log(d/s_d-1)} \underset{d \to \infty}{\to} 0.
\]

\noindent\textbf{Proof of Lemma \ref{lem:reduction}.}
We prove only \eqref{eq:lem:reduction1}. The proof of \eqref{eq:lem:reduction2} follows exactly the same argument, which in fact applies more generally, when the risk of selector $\widehat \eta$ is defined as ${\mathbf E}_\theta w(\widehat \eta, \eta(\theta))$ with a measurable function $w(\cdot,\cdot)$.

Let $O = (O_1,\dots,O_d)$ be a collection of random matrices such that $O_j$ are mutually independent, each $O_j$ is distributed according to the Haar measure $\Pi$ on the set of all orthogonal matrices in $\mathbf{R}^{k \times k}$, and $O$ is independent of $( Y_1,\dots,Y_d)$. For any tuple $o=(o_1,\dots,o_d)$ of orthogonal matrices $o_1,\dots,o_d$ we set
$$
\widetilde \eta_o ( Y_1,\dots,Y_d) = \hat \eta(o_1 Y_1,\dots, o_d Y_d)
$$
{By the invariance of the Haar measure $\Pi$ under orthogonal transformations the random matrices $O_j$ and ${O}_j  o_j$
have the same distribution for any orthogonal matrix $o_j$.
Choosing for $y_j \in \mathbf{R}^k$ an orthogonal matrix $o_j$ such that $o_j {\rm e}_1 \|y_j\| = y_j$ we have
\begin{equation}\label{samelaw}
O_j y_j \overset{D}{=} O_j {\rm e}_1 \|y_j\|, \quad j=1,\dots,d,
\end{equation}
where  ${\rm e}_1 = (1,0,\dots,0)$ is the first canonical basis vector of $\mathbf{R}^k$ and where $\overset{D}{=}\,$ means the equality in distribution of two random vectors. 
We conclude from \eqref{samelaw}, that the conditional distributions  of $O_j Y_j$ and  of $ O_j {\rm e}_1 \|Y_j\|$ given $Y_j$ coincide.} {Since $O_1,\dots,O_d$ are independent this conclusion extends to the joint distributions of $(O_1 Y_1,\dots,O_d Y_d)$  and $(O_1 {\rm e}_1 \|Y_1\|,\dots,O_d {\rm e}_1 \|Y_d\| )$ conditionally on $(Y_1,\dots,Y_d)$. This also implies that the unconditional joint distributions agree. }

{Consider now the randomized selector
$$
\overline \eta ( \|Y_1\|,\dots,\|Y_d\| ) := \hat \eta (O_1 {\rm e}_1 \|Y_1\|,\dots,O_d {\rm e}_1 \|Y_d\| ). 
$$
Then, as just proved,
$$
\overline \eta ( \|Y_1\|,\dots,\|Y_d\| ) \overset{D}{=} \hat \eta (O_1 Y_1,\dots,O_d Y_d)
 = \widetilde \eta_O(Y_1,\dots,Y_d).
$$
Thus
\begin{align}\label{eq:randomized0}
{\mathbf E}_\theta |\overline \eta - \eta (\theta)| = {\mathbf E}_\theta |\widetilde \eta_O - \eta(\theta)|=
\int {\mathbf E}_\theta |\widetilde \eta_o - \eta(\theta)| d \Pi(o) 
\end{align}
and
\begin{align}\label{eq:randomized}
\sup_{\theta \in \Theta_{k,d} (s,a)} \int {\mathbf E}_\theta |\widetilde \eta_o - \eta(\theta)| d \Pi(o)
& \leq  \sup_{\theta \in \Theta_{k,d} (s,a)} \sup_o {\mathbf E}_\theta |\widetilde \eta_o - \eta(\theta)| \\& =  \sup_o \sup_{\theta \in \Theta_{k,d} (s,a)} {\mathbf E}_\theta |\widetilde \eta_o - \eta(\theta)| ,\nonumber
\end{align}
where $\sup_o$ runs over all tuples $o=(o_1,\dots,o_d)$ of orthogonal matrices $o_1,\dots,o_d$. 
The result of the lemma now follows from  \eqref{eq:randomized0}, \eqref{eq:randomized}  and the equality
\begin{eqnarray}\label{samelaw2}
 \sup_o \sup_{\theta \in \Theta_{k,d} (s,a)}  {\mathbf E}_\theta |\widetilde \eta_o - \eta(\theta)| = \sup_{\theta \in \Theta_{k,d} (s,a)} {\mathbf E}_\theta |\hat \eta - \eta(\theta)|.
\end{eqnarray}
To prove \eqref{samelaw2}, we argue as follows.
For any fixed tuple $o=(o_1,\dots,o_d)$ of orthogonal matrices $o_1,\dots,o_d$ we have $\eta(\theta) = \eta(o\theta)$ with $o\theta= (o_1\theta_1,\dots,o_d\theta_d)$ and $o_j\xi_j\overset{D}{=} \xi_j$.  Therefore,
\begin{eqnarray*}
&& {\mathbf E}_\theta |\widetilde \eta_o - \eta(\theta)|  = {\mathbf E}_\theta |\hat \eta (o_1(\theta_1 + \sigma \xi_1),\dots,o_d(\theta_d + \sigma \xi_d)) - \eta(o \theta)| \\
&&=  {\mathbf E}_\theta |\hat \eta (o_1\theta_1 + \sigma \xi_1,\dots,o_d\theta_d + \sigma \xi_d) - \eta(o \theta)|\\
&& = {\mathbf E}_{o\theta} |\hat \eta (Y_1,\dots,Y_d)-  \eta(o \theta)|.
\end{eqnarray*}
It follows that
\begin{eqnarray*}
&& \sup_o \sup_{\theta \in \Theta_{k,d} (s,a)} {\mathbf E}_\theta |\widetilde \eta_o - \eta(\theta)|  = \sup_o \sup_{\theta \in \Theta_{k,d} (s,a)} {\mathbf E}_{o\theta} |\hat \eta (Y_1,\dots,Y_d)-  \eta(o \theta)|\\
&&=\sup_o \sup_{o\theta \in \Theta_{k,d} (s,a)} {\mathbf E}_{o\theta} |\hat \eta (Y_1,\dots,Y_d)- \eta(o \theta)|\\
&&= \sup_{\theta \in \Theta_{k,d} (s,a)} {\mathbf E}_{\theta} |\hat \eta (Y_1,\dots,Y_d)-  \eta( \theta)|,
\end{eqnarray*}
which implies the claim \eqref{samelaw2} and concludes the proof of the lemma.}

Below, we will use some probability inequalities for chi-square random variables. For convenience, we collect them in the following lemma.
\begin{lemma}\label{lem:chi2}
Let $x>0$. If $\chi$ is a $\chi^2$ random variable with $k$ degrees of freedom then
\begin{equation}\label{eq:chi2-LM}
\mathbf{P}(\chi \leq k-2 \sqrt{kx} ) \leq e^{-x}
\end{equation}
and there exist absolute constants $c_3>0$ and $c_4\in(0,1)$ such that
\begin{equation}\label{eq:chi2-ZZ}
\mathbf{P}( \chi \geq k+ c_3\max(x,\sqrt{kx}) ) \geq c_4 e^{-x}.
\end{equation}
If $Z$ is a $\chi^2$ random variable with $k$ degrees of freedom and non-centrality parameter $B\ge 0$ then
\begin{equation}\label{eq:chi2-Birge}
\mathbf{P}( Z \geq k+ B^2 + 2 \sqrt{(k+2B^2)x} + 2x) \leq e^{-x}.
\end{equation}
\end{lemma}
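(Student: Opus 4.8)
\noindent\textbf{Proof of Lemma \ref{lem:chi2} (proposal).}
The three bounds are of increasing difficulty, and the plan is to dispose of the first two quickly and then concentrate on the third. Inequality \eqref{eq:chi2-LM} is the standard lower deviation bound for a central chi-square, due to Laurent and Massart (Lemma~1 in \cite{LM2000}): writing $\chi=\sum_{i=1}^{k}Z_{i}^{2}$ with $Z_{i}$ i.i.d.\ $N(0,1)$ and controlling the moment generating function one obtains $\mathbf{P}(\chi\le k-2\sqrt{kx})\le e^{-x}$, so I would simply invoke it. For \eqref{eq:chi2-Birge} I would run the analogous Chernoff argument in the non-central case: for $Z$ distributed as $\chi^{2}_{k}$ with non-centrality parameter $B^{2}$ (so that $\mathbf{E}Z=k+B^{2}$) one has $\mathbf{E}\,e^{tZ}=(1-2t)^{-k/2}\exp\big(B^{2}t/(1-2t)\big)$ for $0\le t<1/2$, hence $\mathbf{P}(Z\ge u)\le\exp\big(\psi(t)-tu\big)$ with $\psi(t)=\log\mathbf{E}\,e^{tZ}$; minimising over $t$ --- the variance proxy of $Z$ being $k+2B^{2}$ --- shows the exponent is at most $-x$ at $u=k+B^{2}+2\sqrt{(k+2B^{2})x}+2x$. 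This is Birg{\'e}'s inequality; I would carry out the short optimisation explicitly, or cite \cite{LM2000,JKB1995}.

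The substantive part is \eqref{eq:chi2-ZZ}, a matching lower bound on the \emph{upper} tail of a central $\chi^{2}_{k}$, and here I would distinguish two regimes. In the moderate-deviation regime $x\le k$ I would invoke the sharp non-asymptotic tail lower bounds of Zhang and Zhou \cite{zhang2020nonasymptotic}, which yield absolute constants $a_{1},a_{2}>0$ with $\mathbf{P}(\chi\ge k+t)\ge a_{1}\exp(-a_{2}t^{2}/k)$ for all $0<t\le k$; applying this with $t=c_{3}\sqrt{kx}$ (admissible since $c_{3}\le 1$ and $x\le k$ force $t\le k$) and choosing $c_{3}$ so that $a_{2}c_{3}^{2}\le 1$ gives $\mathbf{P}(\chi\ge k+c_{3}\sqrt{kx})\ge a_{1}e^{-x}$. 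In the large-deviation regime $x>k$ I would discard the moderate-deviation estimate and use a single squared coordinate: since $\chi\ge Z_{1}^{2}$ and $k+c_{3}x\le(1+c_{3})x$, it suffices to bound $\mathbf{P}\big(|Z_{1}|\ge\sqrt{(1+c_{3})x}\,\big)$ from below, and the elementary inequality $\mathbf{P}(|Z_{1}|\ge u)\ge\frac{2u}{\sqrt{2\pi}\,(u^{2}+1)}\,e^{-u^{2}/2}$, together with $u^{2}=(1+c_{3})x>1$, produces a bound of the form $c\,x^{-1/2}e^{-(1+c_{3})x/2}$; since $x\mapsto x^{-1/2}e^{(1-c_{3})x/2}$ is bounded away from $0$ on $[1,\infty)$ whenever $c_{3}<1$, this is at least $c_{4}e^{-x}$. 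Taking $c_{3}=\min\{1/2,\,a_{2}^{-1/2}\}$, which simultaneously ensures $c_{3}\le 1$, $a_{2}c_{3}^{2}\le 1$ and $c_{3}<1$, and letting $c_{4}$ be the smaller of the two constants produced above, proves \eqref{eq:chi2-ZZ}.

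I expect \eqref{eq:chi2-ZZ} in the regime $x\le k$ to be the only genuine obstacle: everything there rests on having a lower bound for $\mathbf{P}(\chi\ge k+t)$ with the optimal constant in the exponent, so that after the substitution $t=c_{3}\sqrt{kx}$ the decay is $e^{-x}$ up to constants, and this is precisely what \cite{zhang2020nonasymptotic} provides. A Berry--Esseen bound (e.g.\ \cite{shevtsova}) does \emph{not} suffice here, because its additive $O(k^{-1/2})$ error swamps the tail probability once $x$ is of order $k$; reproving the estimate without \cite{zhang2020nonasymptotic} would require a genuine moderate-deviations argument, e.g.\ exponential tilting of $\chi$ to mean $k+c_{3}\sqrt{kx}$ followed by a local central limit estimate under the tilted law, which is considerably more laborious than quoting the sharp inequality.
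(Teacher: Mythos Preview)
Your proposal is correct and, in fact, goes further than the paper itself: the paper does not prove Lemma~\ref{lem:chi2} at all but simply cites \cite[Lemma~1]{LM2000} for \eqref{eq:chi2-LM}, \cite[Corollary~3]{zhang2020nonasymptotic} for \eqref{eq:chi2-ZZ}, and Birg\'e's lemma for \eqref{eq:chi2-Birge}. Your citations match, and the additional sketches you supply (the Chernoff computation for the non-central case, and the two-regime argument for \eqref{eq:chi2-ZZ}) are sound.

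One minor remark on the comparison: your split into $x\le k$ and $x>k$ for \eqref{eq:chi2-ZZ} is more work than needed, since the result of \cite{zhang2020nonasymptotic} that the paper invokes already covers the full range in one statement. Your single-coordinate argument for $x>k$ is a nice elementary alternative, but it is not required to close the proof.
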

Inequalities \eqref{eq:chi2-LM}, \eqref{eq:chi2-ZZ}, and \eqref{eq:chi2-Birge} are proved in \cite[Lemma 1]{LM2000}, \cite[Corollary 3]{zhang2020nonasymptotic}, and \cite[Lemma 8.1]{birge}, respectively.

\noindent\textbf{Proof of Theorem \ref{th:lower-bound:group}.}
We assume w.l.o.g. that $\sigma=1$.
It suffices to prove \eqref{eq:exact-rec-lower} and \eqref{eq:almost-full-rec-lower} with $\Theta_{k,d} (s,a)$ replaced by a smaller set 
$$\Theta'_{k,d} (s,a)= \Big\{\theta\in\Theta_{k,d} (s,a) :\,\|\theta_j\|\in \{0,a\}, j=1,\dots,d\Big\}.
%\Theta_{k,d} (s,a)\cap \Big\{\theta:\,\|\theta_j\|\in \{0,a\}, j=1,\dots,d,\,\text{and}\, \sum_{j=1}^{d}\mathbf{1}(\|\theta_j\|=a)=s\Big\}.
$$
If $\theta$ belongs to $ \Theta'_{k,d} (s,a)$, the random variables 
$X_j = \|Y_j\|^2$ have a central $\chi^2$-distribution with $k$ degrees of freedom  for exactly $d-s$ indices $j$  and a non-central $\chi^2$-distribution with $k$ degrees of freedom  and with non-centrality parameter $a$ for the remaining $s$ indices $j$. Let $f_0$ and $f_1$ be the densities of these two distributions with respect to the Lebesgue measure:
\begin{eqnarray}\label{f0} f_0(z)  &=& \frac { z^{k/2-1} e^{-z/2}}{2^{k/2} \Gamma(k/2)},\\
f_1(z)  &=&2^{-k/2}  e^{-(z+a^2)/2} \sum_{j=0}^\infty \frac { z^{k/2+j-1} (a^2/4)^j}{\Gamma(k/2+j)j!},\quad z>0,\label{f1}
\end{eqnarray}
see \cite{JKB1995}. Notice that the likelihood ratio $f_1(z)/f_0(z)$ is monotone in $z$. Thus, we can use the lower bounds for variable selection under the MLR property obtained in Sections \ref{sec:minimax-and-bayes}. 

To prove the first assertion of the theorem we use Corollary \ref{cor:exact_rec}. It implies that if $F^{-1}_{0}(1-1/(d-s))> F^{-1}_{1}(1/s)$ then \eqref{eq:exact-rec-lower} holds. 
Thus,  it is enough to check that there exists $c_1>0$ such that, for all $d- s$ large enough, the condition  
$
a^{2} < c_1\max(\log(d-s), \sqrt{k\log(d-s)})
$
implies $F^{-1}_{0}(1-1/(d-s))> F^{-1}_{1}(1/s)$.
Note that, due to \eqref{eq:chi2-ZZ}, for all $d- s$ large enough we have
\[
F^{-1}_{0}(1-1/(d-s)) \geq k + c' \max(\log(d-s), \sqrt{k\log(d-s)})
\]
for some absolute constant $c'>0$. On the other hand,  \eqref{eq:chi2-Birge} with $B=a$ implies that, for all $s\ge2$, 
\begin{align*}
F^{-1}_{1}(1/s) &\leq k+ a^2 + 2 \sqrt{(k+2a^2)\log(s/(s-1))} + 2\log(s/(s-1))
\\
&\le 
k+ a^2 + 2 \sqrt{k+2a^2} + 2 \le k+ 2a^2 + 2 \sqrt{k} + 4.	
\end{align*}
Comparing the last two displays yields the result. 

The second assertion of the theorem is proved in a similar way.  We use Corollary \ref{cor:almost_rec}, which implies that  if $F^{-1}_{0}(1-1/(\lfloor d/s\rfloor-1))> F^{-1}_{1}(1/2)$ then \eqref{eq:almost-full-rec-lower} holds. By the same argument as above, $F^{-1}_1(1/2) \leq k+ 2a^2 + 2 \sqrt{k} + 4 $ and, for 
$d/s$ large enough,
\[
F^{-1}_{0}(1-1/(\lfloor d/s\rfloor -1)) \geq k + c'' (\log(d/s-1) \vee \sqrt{k\log(d/s-1)}),
\]
where $c''>0$ is an absolute constant. We conclude that there exists an absolute constant $c>0$ such that, for $d/s$ large enough, the condition
\[
a^{2} < c(\log((d-s)/s) \vee \sqrt{k\log((d-s)/s)})
\]
implies that $F^{-1}_{0}(1-1/(\lfloor d/s\rfloor-1))> F^{-1}_{1}(1/2)$. 

\noindent\textbf{Proof of Theorem \ref{th:upper-bound:group}.}
It is enough to prove the exact recovery and almost full recovery properties \eqref{exact-rec}  and \eqref{almost-full-rec} for the selector \eqref{opt-selector}. Indeed, then  the same properties 
for the scan selector follow from Proposition \ref{prop:relation-to-thresholding-lambda}, and the facts that   \eqref{opt-selector} is a thresholding selector and $X_1,\dots,X_d$ are $\mathbf{P}_\theta$-a.s. distinct for any $\theta\in\Theta_{k,d}(s,a)$. Moreover, 
\eqref{exact-rec-PWR} is an immediate consequence of \eqref{exact-rec} in view of  \eqref{eq:risks}.

We assume w.l.o.g. that $\sigma = 1$ and we write for brevity $s_d=s$, $k_d=k$, $a_d=a$, $t_d=t$. Denoting by $\langle \cdot,\cdot \rangle$ the Euclidean inner product we can bound the risk of the selector \eqref{opt-selector} as follows:
\begin{align*}
{\mathbf E}_\theta |\hat \eta - \eta(\theta)| &=  \sum_{i \in S(\theta) }\mathbf{P}_\theta(\|\theta_i + \xi_i\|^2 \leq t) + \sum_{i \not\in S(\theta)}\mathbf{P}_\theta(\|\xi_i\|^2 \geq t) \\
& \leq  \sum_{i \in S(\theta) }\mathbf{P}\left(\|\theta_i\|^2 - 2 \langle \theta_i,\xi_i \rangle +\| \xi_i\|^2  \leq t, \frac{\langle \theta_i,\xi_i \rangle}{\|\theta_i\|} < a/4\right) \\
& \quad + \sum_{i \in S(\theta) }\mathbf{P}\left(\frac{\langle \theta_i,\xi_i \rangle}{\|\theta_i\|} \geq a/4\right) + (d-s)\mathbf{P}( \|\xi_1\|^2 \geq t )
\\
& \leq  s \cdot \mathbf{P}( \|\xi_1\|^2 \leq t-a^{2}/2 ) %\\
%& \quad 
+ s \cdot \mathbf{P}\left(\mathcal{N} \geq a/4\right) + d \cdot \mathbf{P}( \|\xi_1\|^2 \geq t ),
%\\
%&\leq s\mathbf{P}( k - \|\xi_1\|^2 \geq a^{2}/2 -t^2 +  k) + s\mathbf{P}\left(\frac{\langle \theta_1,\xi \rangle}{\|\theta_1\|} \geq a/4\right)\\
%& \quad + d\mathbf{P}( \|\xi_1\|^2 -  k \geq t^2 - k).
\end{align*}
where $\mathcal{N}$ denotes a standard normal variable and we have used the fact that $\|\theta_i\|\ge a$ for $i \in S(\theta)$. We now apply \eqref{eq:chi2-LM} to bound $\mathbf{P}( \|\xi_1\|^2 \leq t-a^{2}/2 )$ and 
\eqref{eq:chi2-Birge} with $B=0$ to bound $\mathbf{P}( \|\xi_1\|^2  \geq t)$.
Since, by assumption, $t= k + 4 \log(d) + 4 \sqrt{k\log(d)}$ and $a^2/2 \geq t - k + 4\sqrt{k\log(d)} + 36 \log(d)$ we get that
\begin{align*}
{\mathbf E}_\theta |\hat \eta - \eta(\theta)| \leq s\Big(\frac{1}{d^4} + \frac{1}{d^2}\Big) + \frac1d \le \frac3d \underset{d \to \infty}{\to} 0. 
\end{align*}
Thus, the selector \eqref{opt-selector} achieves exact recovery  \eqref{exact-rec}  under the stated choice of $t$ and $a$.
The assertion of the theorem regarding almost full recovery is proved analogously. Indeed, the same argument as above with $t= k + 4 \log(d/s) + 4 \sqrt{k\log(d/s)}$ and $a^2/2 \geq t - k + 4\sqrt{k\log(d/s)} + 36 \log(d/s)$ yields that
\[
\frac{1}{s}{\mathbf E}_\theta |\hat \eta - \eta(\theta)| \leq \Big(\frac{1}{(d/s)^4} + \frac{1}{(d/s)^2}\Big) + \frac{s}{d}
\underset{d/s \to \infty}{\to} 0.
\]

\end{document}